%% LyX 2.4.0-alpha3 created this file.  For more info, see https://www.lyx.org/.
%% Do not edit unless you really know what you are doing.
\documentclass[reqno]{amsart}
\usepackage[utf8]{inputenc}
\synctex=-1
\usepackage{color,mathtools}
\usepackage{array}
\usepackage{refstyle}
\usepackage{varwidth}
\usepackage{amstext}
\usepackage{amsthm}
\usepackage{amssymb}
\usepackage{graphicx}
\usepackage[unicode=true,pdfusetitle,
 bookmarks=true,bookmarksnumbered=true,bookmarksopen=false,
 breaklinks=false,pdfborder={0 0 1},backref=false,colorlinks=true]
 {hyperref}

\makeatletter

%%%%%%%%%%%%%%%%%%%%%%%%%%%%%% LyX specific LaTeX commands.

\AtBeginDocument{}
\AtBeginDocument{\providecommand\Secref[1]{\ref{Sec:#1}}}
\AtBeginDocument{\providecommand\Propref[1]{\ref{Prop:#1}}}
\AtBeginDocument{\providecommand\Corref[1]{\ref{Cor:#1}}}
\AtBeginDocument{\providecommand\Lemref[1]{\ref{Lem:#1}}}
\AtBeginDocument{\providecommand\Subsecref[1]{\ref{Subsec:#1}}}
%% Because html converters don't know tabularnewline
\providecommand{\tabularnewline}{\\}
%% Variable width box for table cells
\newenvironment{cellvarwidth}[1][t]
    {\begin{varwidth}[#1]{\linewidth}}
    {\@finalstrut\@arstrutbox\end{varwidth}}
\RS@ifundefined{subsecref}
  {\newref{subsec}{name = \RSsectxt}}
  {}
\RS@ifundefined{thmref}
  {\def\RSthmtxt{theorem~}\newref{thm}{name = \RSthmtxt}}
  {}
\RS@ifundefined{lemref}
  {\def\RSlemtxt{lemma~}\newref{lem}{name = \RSlemtxt}}
  {}

%%%%%%%%%%%%%%%%%%%%%%%%%%%%%% Textclass specific LaTeX commands.
\numberwithin{equation}{section}
\numberwithin{figure}{section}
\theoremstyle{plain}
\newtheorem{thm}{\protect\theoremname}
\numberwithin{thm}{section}
\theoremstyle{plain}

\theoremstyle{remark}
\newtheorem{rem}[thm]{\protect\remarkname}
\theoremstyle{plain}
\newtheorem{prop}[thm]{\protect\propositionname}
\theoremstyle{plain}
\newtheorem{cor}[thm]{\protect\corollaryname}
\theoremstyle{remark}
\newtheorem*{rem*}{\protect\remarkname}
\theoremstyle{definition}

\theoremstyle{plain}
\newtheorem{lem}[thm]{\protect\lemmaname}

\@ifundefined{date}{}{\date{}}
%%%%%%%%%%%%%%%%%%%%%%%%%%%%%% User specified LaTeX commands.
%\usepackage{libertine}
%\usepackage[libertine]{newtxmath}
%% changing the name of the nomenclature section

%\usepackage{fullpage}
%opening
\usepackage{textgreek}
\usepackage{graphicx}
\usepackage{amsfonts}
\usepackage{amsthm}
\allowdisplaybreaks
\usepackage{cases}
\usepackage{tikz-cd}
\usepackage{cancel}
\usepackage{faktor}

\usepackage{refstyle}
\newref{cor}{name = corollary~, names = corollaries~, Name = Corollary~, Names = Corollaries~}
\newref{app}{name= appendix~, names = appendices~, Name = Appendix~, Names = Appendices~}
\newref{conj}{name= conjecture~, names = conjectures~, Name = Conjecture~, Names = Conjectures~}
\newref{exa}{name= example~, names = examples~, Name = Example~, Names = Examples~}
\newref{rem}{name= remark~, names = remarks~, Name = Remark~, Names = Remarks~} 
\newref{thm}{name= theorem~, names = theorems~, Name = Theorem~, Names = Theorems~}
\newref{lem}{name= lemma~, names = lemmas~, Name = Lemma~, Names = Lemmas~}
\newref{def}{name= definition~, names = definitions~, Name = Definition~, Names = Definitions~}
\newref{fact}{name= fact~, names = facts~, Name = Fact~, Names = Facts~}
\newref{blackbox}{name= blackbox~, names = blackboxes~, Name = Blackbox~, Names = Blackboxes~}
\newref{sec}{name= section~, names = sections~, Name = Section~, Names = Sections~}
\newref{subsec}{name= subsection~, names = subsections~, Name = Subsection~, Names = Subsections~}
\newref{prop}{name= proposition~, names = propositions~, Name = Proposition~, Names = Propositions~}

%% framed proofs
\usepackage[framemethod=TikZ]{mdframed}
 
%\surroundwithmdframed[
%   topline=false,
%   rightline=false,
%   bottomline=false,
%   leftmargin=\parindent,
%   skipabove=\medskipamount,
%   skipbelow=\medskipamount
%]{proof}

%% crossed / dashed integral
\def\Xint#1{\mathchoice
{\XXint\displaystyle\textstyle{#1}}%
{\XXint\textstyle\scriptstyle{#1}}%
{\XXint\scriptstyle\scriptscriptstyle{#1}}%
{\XXint\scriptscriptstyle\scriptscriptstyle{#1}}%
\!\int}
\def\XXint#1#2#3{{\setbox0=\hbox{$#1{#2#3}{\int}$ }
\vcenter{\hbox{$#2#3$ }}\kern-.6\wd0}}

\def\dashint{\Xint-}

%% for ring above letter
\usepackage{accents}

%% adding nomenclature to bookmarks / Table of Contents

%% nomenclature, sorting by appearance
\providetoggle{nomsort}
\settoggle{nomsort}{true} % true = sort by use, false = sort as usual

\makeatletter
\iftoggle{nomsort}{%
    \let\old@@@nomenclature=\@@@nomenclature        
        \newcounter{@nomcount} \setcounter{@nomcount}{0}%
        \renewcommand\the@nomcount{\two@digits{\value{@nomcount}}}% Ensure 10>01
        \def\@@@nomenclature[#1]#2#3{% Taken from package documentation
          \addtocounter{@nomcount}{1}%
        \def\@tempa{#2}\def\@tempb{#3}%
          \protected@write\@nomenclaturefile{}%
          {\string\nomenclatureentry{\the@nomcount\nom@verb\@tempa @[{\nom@verb\@tempa}]%
          \begingroup\nom@verb\@tempb\protect\nomeqref{\theequation}%
          |nompageref}{\thepage}}%
          \endgroup
          \@esphack}%
      }{}
\makeatother

%% iso arrow

%%tensor index
\usepackage{tensind}
\tensordelimiter{?}

%% change links color
\hypersetup{citecolor=purple,linkcolor=blue}

%%function restriction
%% Use it as \restr{f}{A}.
\newcommand\restr[2]{{% we make the whole thing an ordinary symbol
  \left.\kern-\nulldelimiterspace % automatically resize the bar with \right
  #1 % the function
  \vphantom{\big|} % pretend it's a little taller at normal size
  \right|_{#2} % this is the delimiter
  }}

%% for long equal signs \xlongequal{<stuff>}
\usepackage{extarrows}

%% short bib driver
%
%\AtBeginDocument{ 
%\DeclareBibliographyAlias{article}{std}
%\DeclareBibliographyAlias{book}{std}
%\DeclareBibliographyAlias{booklet}{std}
%\DeclareBibliographyAlias{collection}{std}
%\DeclareBibliographyAlias{inbook}{std}
%\DeclareBibliographyAlias{incollection}{std}
%\DeclareBibliographyAlias{inproceedings}{std}
%\DeclareBibliographyAlias{manual}{std}
%\DeclareBibliographyAlias{misc}{std}
%\DeclareBibliographyAlias{online}{std}
%\DeclareBibliographyAlias{patent}{std}
%\DeclareBibliographyAlias{periodical}{std}
%\DeclareBibliographyAlias{proceedings}{std}
%\DeclareBibliographyAlias{report}{std}
%\DeclareBibliographyAlias{thesis}{std}
%\DeclareBibliographyAlias{unpublished}{std}
%\DeclareBibliographyAlias{*}{std}
%
%\DeclareBibliographyDriver{std}{%
%  \usebibmacro{bibindex}%
%  \usebibmacro{begentry}%
%  \usebibmacro{author/editor+others/translator+others}%
%  \setunit{\labelnamepunct}\newblock
%  \usebibmacro{title}%
%  \newunit\newblock
%  \printfield{journaltitle}
%  \newunit\newblock
%  \printfield{url}
%  \newunit\newblock
%  \usebibmacro{date}%
%  \newunit\newblock
%  \usebibmacro{finentry}}
%}

%---------------------------

\usepackage{enumerate}\usepackage{color}%\usepackage[margin=1in]{geometry}

\let\div\relax
\DeclareMathOperator\div{div}

\DeclareMathOperator\Div{div}
\DeclareMathOperator\curl{curl}

\DeclareMathOperator\supp{supp}

\newcommand{\dd}[0]{\partial}
\newcommand{\grad}[0]{\nabla}

\newcommand{\T}[0]{\mathbb T^d}

\theoremstyle{plain}

\newcommand{\eq}[1]{\begin{align}{#1}\end{align}}
\newcommand{\eqn}[1]{\begin{align*}{#1}\end{align*}}
\newcommand{\Rd}[0]{{\mathbb R^d}}
\newcommand{\Td}[0]{{\mathbb T^d}}
\newcommand{\Ig}[2]{I^{g;#1}_{#2}}

\makeatother

\providecommand{\conjecturename}{Conjecture}
\providecommand{\corollaryname}{Corollary}
\providecommand{\definitionname}{Definition}
\providecommand{\lemmaname}{Lemma}
\providecommand{\propositionname}{Proposition}
\providecommand{\remarkname}{Remark}
\providecommand{\theoremname}{Theorem}

%\addbibresource{khangZotero.bib}
\begin{document}
\global\long\def\divop{\operatorname{div}}%
\global\long\def\supp{\operatorname{supp}}%
\global\long\def\curl{\operatorname{curl}}%

\global\long\def\grad{\nabla}%
\global\long\def\ls{\lesssim}%

\global\long\def\dd{\partial}%

\global\long\def\Ig{I_{\#2}^{g;\#1}}%

\global\long\def\T{\mathbb{T}^{d}}%

\title[Convex integration above the Onsager exponent]{Convex integration above the Onsager exponent for the forced Euler equations}
\author{Aynur Bulut}
\address{Louisiana State University, 303 Lockett Hall, Baton Rouge, LA 70803}
\email{aynurbulut@lsu.edu}
\author{Manh Khang Huynh}
\address{Georgia Institute of Technology Department of Mathematics}
\email{mhuynh41@gatech.edu}
\author{Stan Palasek}
\address{UCLA Department of Mathematics}
\email{palasek@math.ucla.edu}
\begin{abstract}
We establish new non-uniqueness results for the Euler equations with external force on $\mathbb{T}^{d}$ $(d\geq3)$.  By introducing a novel alternating convex integration scheme, we construct non-unique, almost-everywhere smooth, Hölder-continuous solutions with regularity $\frac{1}{2}-$, which is notably above the Onsager threshold of $\frac{1}{3}$.

The solutions we construct differ significantly in nature from those which arise from the recent unstable vortex construction of Vishik; in particular, our solutions are genuinely $d$-dimensional ($d\geq3$), and give non-uniqueness results for any smooth data.  To the best of our knowledge, this is the first instance of a convex integration construction above the Onsager exponent.
\end{abstract}

\maketitle

\section{Introduction}

\setlength{\parskip}{0.05in}

We consider the incompressible Euler equations with external force $f:[0,T]\times\mathbb{T}^{d}\to\mathbb{R}^{d}$,
\begin{equation}
\begin{cases}
\partial_{t}v+\divop v\otimes v+\nabla p=f\\
\divop v=0
\end{cases}\label{eq:Euler_start}
\end{equation}
on the periodic domain $\mathbb{T}^{d}$, where $d\geq3$ is the spatial dimension, $v:[0,T]\times\mathbb{T}^{d}\to\mathbb{R}^{d}$
is the velocity field, and $p:[0,T]\times\mathbb{T}^{d}\to\mathbb{R}$ is the pressure.

When the forcing term $f$ is sufficiently regular, the classical theory shows that solutions of the Euler system (\ref{eq:Euler_start}) with $v\in C_t^0C_x^{1+\alpha}$ enjoy favorable regularity properties.  This includes, for instance, local well-posedness of the initial value problem and conservation of energy (in the sense that change in kinetic energy is balanced by work done by the force). 

A central question in the theory of weak solutions and fluid turbulence is whether these properties persist at lower regularities.  To formulate this more precisely, in the context of the framework formulated by Klainerman in \cite{klainermanNashUniqueContribution2016} (see also \cite{buckmasterConvexIntegrationPhenomenologies2019}, \cite{buckmasterNonuniquenessWeakSolutions2019b}), fixing a scale of function spaces $X^\alpha$, a number of critical regularity thresholds arise: 
\begin{itemize}
\item the Onsager threshold $\alpha_O$ for conservation of energy; 
\item the Nash threshold $\alpha_N$ separating flexibility and rigidity; 
\item the threshold $\alpha_U$ of regularity above which we are guaranteed uniqueness for the initial value problem; and 
\item the threshold $\alpha_{WP}$ of regularity above which the initial value problem is locally well-posed.
\end{itemize}  
With these definitions in hand, it is reasonable to expect that the ordering
\[
\alpha_{O}\leq\alpha_{N}\leq\alpha_{U}\leq\alpha_{WP}
\]
should hold.

In recent years, with the conventional choice $X^\alpha=C^\alpha$, substantial progress has been made in determining the critical exponents for the unforced Euler system.  Bourgain-Li \cite{bourgain2015strong} and Elgindi-Masmoudi \cite{elgindi2020infty} have shown that $\alpha_{WP}=1$.  On the other end of the scale, we have $\alpha_O\leq\frac13$ due to Constantin-E-Titi \cite{constantinOnsagerConjectureEnergy1994}.  The equality $\alpha_O=\frac13$, known as Onsager's conjecture, was proven recently by Isett \cite{isettProofOnsagerConjecture2018} using a convex integration approach pioneered by De~Lellis and
Székelyhidi Jr.\ \cite{delellisEulerEquationsDifferential2007} and advanced by many other authors; see \cite{delellisDissipativeContinuousEuler2013,delellisDissipativeEulerFlows2014,delellisEulerEquationsDifferential2007,buckmasterAnomalousDissipation5Holder2015,isettOlderContinuousEuler2014}
and the references therein.  The flexible construction of non-conservative Euler flows in $C^{\frac13-}$ can also be applied to exhibit non-uniqueness and an h-principle \cite{buckmasterOnsagerConjectureAdmissible2017}, thus establishing that $\min(\alpha_N,\alpha_U)\geq\frac13$.

However, determining the precise values of $\alpha_N$ and $\alpha_U$ for the unforced Euler system remains a difficult and unsolved problem.  Indeed, toward this end, Klainerman asks in \cite{klainermanNashUniqueContribution2016},
\begin{quotation}
``Can one extend convex integration methods to construct solutions
above the Onsager exponent?''
\end{quotation}

In this paper, we answer this question affirmatively in the case of the \emph{forced} Euler system (\ref{eq:Euler_start}).  Our main theorem is as follows.

\begin{thm}
\label{thm:thm1}With $d\geq3$, let $V_{1},V_{2},V_{3}\in C^\infty(\Td\to\Rd)$ be any divergence-free vector fields such that $\int_{\mathbb{T}^{d}}V_{1}=\int_{\mathbb{T}^{d}}V_{2}=\int_{\mathbb{T}^{d}}V_{3}$.

Then for every $\beta\in(0,\frac12)$ there exist $u,v\in C_{t,x}^{\beta-}$ and $F\in C_{t}^{0}C_{x}^{2\beta-}$
such that 
\begin{itemize}
\item $u,v$ are weak solutions to (\ref{eq:Euler_start}) with
common initial data $u(0)=v(0)=V_{1}$ and force $f=\div F$,
\item $u(T)=V_{3}$, $v(T)=V_{2}$, and
\item $u$, $v$, and $F$ are smooth for almost all times.
\end{itemize}
\end{thm}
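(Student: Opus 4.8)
The plan is to run a convex integration scheme, but with a new twist adapted to the forced setting: an \emph{alternating} scheme in which the two solutions $u$ and $v$ are built simultaneously, and at each stage we alternately cancel the Reynolds stress error of one while temporarily allowing the force to absorb the error of the other. We work with the Euler–Reynolds system $\partial_t w + \div(w\otimes w) + \nabla p = \div(F+R)$, $\div w=0$, where $R$ is a symmetric Reynolds stress; the key observation is that, unlike the unforced case, we have the freedom to \emph{dump part of the error into $F$}. Since the target regularity is $\beta-$ with $\beta<\tfrac12$ (rather than $\tfrac13-$), the usual obstruction — that the transport error and oscillation error cannot be simultaneously beaten past exponent $\tfrac13$ — must be circumvented precisely by using $F$ as a reservoir: the force only needs to lie in $C^0_tC^{2\beta-}_x$, which is a much weaker demand, and $F$ at level $2\beta$ is exactly compatible with $w$ at level $\beta$ by the quadratic scaling $w\otimes w \sim F$.

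First I would set up the iteration: sequences $u_q, v_q, F_q, R^u_q, R^v_q$ with frequency parameters $\lambda_q$ growing super-exponentially and amplitude parameters $\delta_q \sim \lambda_q^{-2\beta}$, so that $\|w_q\|_{C^1_x}\lesssim \delta_q^{1/2}\lambda_q$ and $\|R_q\|_{C^0}\lesssim \delta_{q+1}$. The perturbation at each step is the standard Mikado/Beltrami-type building block $w_{q+1}-w_q = \sum_k a_k(x,t)\, W_k(\lambda_{q+1}x)$ with coefficients chosen via the geometric lemma so that the low-frequency part of $\sum |W_k|^2$ cancels the old stress; the new errors are the transport error, the Nash/oscillation error, and the inverse-divergence of high-frequency terms. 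Second — and this is the heart of the alternating idea — at odd stages I cancel $R^u_q$ exactly (producing a genuine new stress $R^u_{q+1}$ of size $\delta_{q+2}$) while I only \emph{partially} correct $v$, folding the residual of $v$'s error into an update $F_q \to F_{q+1}$; at even stages the roles swap. One checks that over two consecutive stages each of $u$ and $v$ has its stress reduced by the required factor, while $F_q$ converges in $C^0_tC^{2\beta-}_x$ because its increments are controlled by $\|w_{q+1}\otimes w_{q+1} - w_q\otimes w_q\|$ at the relevant frequency, which sums. The endpoint conditions $u(0)=v(0)=V_1$, $u(T)=V_3$, $v(T)=V_2$ are arranged by the now-standard device of making all perturbations supported in $t\in(0,T)$ and glueing smooth interpolations to $V_1$ near $t=0$ and to $V_3$, $V_2$ near $t=T$ (here we use $\int V_1=\int V_2=\int V_3$ so that the mean-zero constraint on the perturbations is consistent); the "smooth for almost every time" property comes for free from a gluing step that makes $R_q$ vanish on a dense open set of times, as in the Isett and Buckmaster–De Lellis–Székelyhidi gluing constructions.

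The main obstacle I anticipate is \emph{bookkeeping the two error reservoirs simultaneously without the force blowing up}. Concretely: when I defer part of $v$'s error into $F$, that deferred piece has frequency $\sim\lambda_{q+1}$ and amplitude $\sim\delta_{q+1}$, so its contribution to $F$ at Hölder exponent $2\beta$ costs $\delta_{q+1}\lambda_{q+1}^{2\beta}\sim 1$ — borderline. To get genuine convergence (a small power of $\lambda_q$ to spare) one must instead defer only the part of the error that is \emph{already} essentially of the form $\div(\text{quadratic in } w)$, i.e. the oscillation error's high-frequency quadratic term, which is naturally a stress at the correct amplitude $\delta_{q+1}$ but lives at frequency $2\lambda_{q+1}$, giving a true gain; the genuinely dangerous transport and Nash errors must still be cancelled by the perturbation, not dumped. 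Making this split clean — deciding exactly which portion of each error goes into the next stress, which into the perturbation, and which into $F$ — and verifying the resulting inductive inequalities close for every $\beta<\tfrac12$ (this is where the exponent $\tfrac12$, rather than $\tfrac13$, enters: the force absorbs precisely the term that otherwise caps the exponent at $\tfrac13$), is the technical crux. The remaining estimates (stationary phase / inverse divergence bounds, commutator estimates for the transport error, Hölder interpolation to pass from $C^0_t C^N_x$ bounds at each stage to the final $C^{\beta-}_{t,x}$ regularity) are routine given the now-extensive convex integration literature cited above.
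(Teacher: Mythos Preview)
Your proposal has the right overall architecture---an alternating scheme in which $u$ and $v$ are treated asymmetrically at each stage, with the force used as a transfer mechanism---but the mechanism you describe for breaking the $\tfrac13$ barrier is not the one that works, and your own ``main obstacle'' paragraph reveals the gap.

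You propose to \emph{selectively} dump certain error terms (the oscillation error, or the high-frequency quadratic part) into $F$ while still cancelling the Nash and transport errors by the perturbation. But the Nash and transport errors are precisely the linear errors that cap the standard scheme at $\beta<\tfrac13$: $\mathcal R(w_{q+1}\cdot\nabla\overline v_q)$ is of size $\lambda_{q+1}^{-1}\delta_{q+1}^{1/2}\|\overline v_q\|_{C^{1}}$, and with the usual bound $\|\overline v_q\|_{C^{1}}\lesssim\lambda_q\delta_q^{1/2}$ this forces $\beta<\tfrac13$. If you ``still cancel them by the perturbation'' as you say, nothing has changed and you are stuck. Your last paragraph even contains a contradiction: you say the force ``absorbs precisely the term that otherwise caps the exponent at $\tfrac13$,'' yet two sentences earlier you say those terms ``must still be cancelled by the perturbation, not dumped.''

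The actual mechanism is different. In the paper's scheme only \emph{one} of $u,v$ carries a Reynolds stress at any stage; the other is an exact forced Euler solution. At stage $q$ one perturbs only $v_q$, producing a new stress $\widetilde R_{q+1}$, and then the \emph{entire} $\widetilde R_{q+1}$ is moved to the $u$-equation by redefining $F_{q+1}=F_q+\widetilde R_{q+1}$, $R_{q+1}=-\widetilde R_{q+1}$. No selective splitting is needed. The reason the Nash and transport errors are already of size $\delta_{q+2}$ is that $v_q$ was \emph{not perturbed at step $q-1$}, so its coarse flow satisfies the improved bound $\|\overline v_q\|_{C^{1}}\lesssim\lambda_q\delta_{q+1}^{1/2}$ (roughly $\lambda_{q-1}\delta_{q-1}^{1/2}$ before gluing) rather than $\lambda_q\delta_q^{1/2}$. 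Plugging this into the Nash estimate gives $\lambda_{q+1}^{-1}\delta_{q+1}^{1/2}\cdot\lambda_q\delta_{q+1}^{1/2}=\tfrac{\lambda_q}{\lambda_{q+1}}\delta_{q+1}$, which is $\lesssim\delta_{q+2}$ for all $\beta<\tfrac12$. This improved $C^1$ bound on the coarse flow---earned by skipping a perturbation step---is the whole point of alternating, and it is missing from your outline.
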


In particular, for any choice of smooth data, there exists an external force such that uniqueness of the initial value problem fails.  

\begin{rem}
\label{rem:F_smooth} Note that when $\beta>\frac{1}{3}$, the combined
regularity of the velocity and force fields is sufficient to guarantee energy
balance (a proof of this fact in the spirit of
\cite{constantinOnsagerConjectureEnergy1994} is given in Appendix
\ref{app:Onsager-exponent}).  This justifies the claim that our solutions live
above the Onsager regularity threshold.  Moreover, while the force barely fails
to be continuous (in the sense that it is the divergence of a $C_x^{1-}$ tensor
field), the total work $\int_0^T\int_\Td f\cdot v\,dxdt$ is finite.
\end{rem}

To prove Theorem~\ref{thm:thm1}, we introduce a novel alternating convex
integration scheme (described in \Subsecref{strategy} below).  The solutions we
construct are significantly different in character from those emerging from the
unstable vortex constructed by Vishik
\cite{vishikInstabilityNonuniquenessCauchy2018,vishikInstabilityNonuniquenessCauchy2018b};
see Subsection \ref{comparison} below for details.  The main idea is that by
choosing the force appropriately, we can execute a convex integration scheme in
which, say, $u$ and $v$ are only perturbed on even and odd steps respectively.
As a result, the successive perturbations are more widely separated in
frequency space, so that stationary phase arguments (see, e.g.
Lemma~\ref{stationaryphase}) produce errors which satisfy improved estimates.

We note that for the unforced Euler system, there is a serious obstruction to convex integration above $\beta=\frac{1}{3}$, as a consequence of \cite{constantinOnsagerConjectureEnergy1994}.  Indeed, standard applications of convex integration schemes would be restricted to producing energy conservative solutions, which is at odds with the expected freedom in choosing the ``slow'' profile of the perturbations.  In the present work, we avoid this issue by letting a carefully constructed external force balance the excess energy pumped in by the high frequency perturbations.  

Before discussing the details of our approach, we make two remarks regarding bounds on the set of singular times for our constructed solutions $u$ and $v$, and the regularity of the constructed force $f$.

\begin{rem}
Let $\mathcal B\subset[0,T]$ be the minimal closed set of times such that $u,v,F\big|_{\mathcal{B}^{c}\times\mathbb{T}^{d}}$ are smooth.  Our convex integration scheme implies a quantitative bound on this singular set (cf.\ \cite{derosaDimensionSingularSet2021,cheskidovSharpNonuniquenessNavierStokes2020,bulutEpochsRegularityWild2022}):
\eqn{
\dim_{\mathcal H}\left(\mathcal{B}\right)\leq\left(\frac{1}{2(1-\beta)}\right)^{+}.
}
\end{rem}

\begin{rem}
Due to the favorable estimates obeyed by the material derivative of the Reynolds stress during our convex integration procedure, one should expect that the force is regular in time as well, and in particular that one has $F\in C_{t,x}^{2\beta-}$; however we do not pursue this question here.
\end{rem}

\subsection{Alternating convex integration strategy}\label{subsec:strategy}

We now summarize the new ideas required to execute convex integration up to regularity $C_x^{1/2-}$.  We take the proof of Onsager's conjecture in \cite{buckmasterOnsagerConjectureAdmissible2017} as our point of comparison and make use of the now-standard notation and terminology therein.  We distinguish two types of errors that appear in $R_{q+1}$: the oscillation error in which the perturbation $w_{q+1}$ interacts with itself, and linear errors in which the perturbation interacts with the coarse flow $\overline v_q$.

\subsubsection{Oscillation error}\label{oscillation}

A careful reading of the proof of Onsager's conjecture reveals that the oscillation error is already suitably small all the way up to $\frac12-$.  Indeed, $R_\ell$ is mainly supported on frequencies up to $\lambda_q$ which leads (roughly) to $\|R_\ell\|_1\lesssim\lambda_q\delta_{q+1}$ (cf.\ the bound $\|R_\ell\|_1\lesssim\ell^{-1}\delta_{q+1}$ used in \cite{buckmasterOnsagerConjectureAdmissible2017}).  Employing this tighter bound, the required error estimates on $\mathcal \div(w_o\otimes w_o)$, $w_o\otimes w_c$, etc.\ follow from the parameter constraint\footnote{Recall $\alpha$ is a positive constant that is chosen to be small depending on $\beta$.}
\eq{\label{onehalfconstraint}
\lambda_{q+1}^{-1+O(\alpha)}\lambda_q\delta_{q+1}\lesssim\delta_{q+2}
}
which can be satisfied for all $\beta<\frac12$.  As a result, no modification is necessary except to carefully track the optimal estimates on the first several derivatives of $v_q$ and $R_q$.  A similar strategy is encapsulated in the ``frequency-energy levels'' used by Isett \cite{isettProofOnsagerConjecture2018}.

Let us also remark that (\ref{onehalfconstraint}) appears to be an inescapable requirement for any convex integration scheme for a system with a quadratic nonlinearity containing one spatial derivative.  We take this as further evidence of our conjecture in Subsection~\ref{comparison} that $\alpha_N=\frac12$ for the forced Euler equations, or at least that ideas substantially different from convex integration would be required to exceed this threshold.

\subsubsection{Linear errors}

It is well understood that the linear errors restrict the Onsager scheme to $\alpha<\frac13$.  Indeed, the ``Nash error'' takes the form $\mathcal R(w_{q+1}\cdot\grad\overline v_q)$ which is uniformly bounded by $\lambda_{q+1}^{-1}\delta_{q+1}^\frac12\|\overline v_q\|_{C^{1,\alpha}}$.  Local theory for the Euler equations implies that the glued solution $\overline v_q$ obeys $\|\overline v_q\|_{C^{1,\alpha}}\lesssim\|v_q\|_{C^{1,\alpha}}$; thus to close the iteration estimates it is required that
\eq{\label{onsagerconstraint}
\lambda_{q+1}^{-1+O(\alpha)}\delta_{q+1}^\frac12\|v_q\|_{C^{1,\alpha}}\lesssim\delta_{q+2}.
}
In fact, the other linear error demands the same constraint (\ref{onsagerconstraint}) on the parameters.  In the Onsager scheme, one cannot expect a better bound than $\|v_q\|_{C^{1,\alpha}}\lesssim\lambda_q^{1+\alpha}\delta_q^\frac12$ which, along with (\ref{onsagerconstraint}), leads to $\beta<\frac13$.  This motivates our objective to design a scheme for which we have a substantially better estimate for $\|v_q\|_{C^{1,\alpha}}$.

The strategy is as follows: we simultaneously consider forced Euler systems for the two velocity fields $u$ and $v$, one of which has a Reynolds stress error:
\begin{equation}
\begin{cases}
\dd_{t}u_{q}+\divop u_{q}\otimes u_{q}+\grad\pi_{q}=\divop F_{q}\\
\dd_{t}v_{q}+\divop v_{q}\otimes v_{q}+\grad p_{q}=\divop F_{q}+\divop R_{q}\\
\divop u_{q}=\divop v_{q}=0.
\end{cases}\label{eq:forced_Euler_dual_system}
\end{equation}
For the moment, let us ignore complications related to mollification and gluing.  In this oversimplified scenario, only $v_q$ needs to be perturbed by convex integration because it possesses the Reynolds stress.  Thus, constructing a perturbation $w_{q+1}$ to cancel $R_q$ as in \cite{buckmasterOnsagerConjectureAdmissible2017}, we can set $u_{q+1}{\,\coloneqq\,}u_q$ and $v_{q+1}{\,\coloneqq\,}v_q+w_{q+1}$ which solve the same system with force $\div F_q$ and a smaller Reynolds stress $\widetilde{R}_{q+1}$.  The key idea is to modify the force to move the Reynolds stress onto the $u_{q+1}$ equation---by setting $F_{q+1}{\,\coloneqq\,}F_q+\widetilde{R}_{q+1}$ and $R_{q+1}{\,\coloneqq\,}-\widetilde{R}_{q+1}$, we have the new system
\begin{equation*}
\begin{cases}
\dd_{t}u_{q+1}+\divop u_{q+1}\otimes u_{q+1}+\grad\pi_{q+1}=\divop F_{q+1}+\divop R_{q+1}\\
\dd_{t}v_{q+1}+\divop v_{q+1}\otimes v_{q+1}+\grad p_{q+1}=\divop F_{q+1}\\
\divop u_{q+1}=\divop v_{q+1}=0.
\end{cases}
\end{equation*}
The point is that since $u_q$ was not perturbed in the last step, we have the improved bound
\eqn{
\|u_{q+1}\|_{C^{1,\alpha}}\lesssim\|u_q\|_{C^{1,\alpha}}
}
which will weaken the requirement (\ref{onsagerconstraint}).  Indeed, by perturbing each velocity field only on \emph{every other} step, we should\footnote{Due to the effect of the force on the local existence theory (see Lemma~\ref{lem:local_existence}), the glued field has slightly worse bounds than $v_{q-1}$.} have the improved bound $\|v_q\|_{C^{1,\alpha}}\lesssim\lambda_{q-1}^{1+\alpha}\delta_{q-1}^\frac12$.  As a result, when we perform a convex integration step on, say, (\ref{eq:forced_Euler_dual_system}), the constraint (\ref{onsagerconstraint}) becomes $\lambda_{q+1}^{-1+O(\alpha)}\lambda_{q-1}\delta_{q-1}^\frac12\delta_{q+1}^\frac12\lesssim\delta_{q+2}$ which can be satisfied for all $\beta<\frac12$.

This strategy for improving the bound on $\|v_q\|_{C^{1,\alpha}}$ using an alternating convex integration scheme has application as well for 2D Euler and inviscid SQG.  In forthcoming work \cite{buluthuynhpalasekSQG}, we prove non-uniqueness of forced weak solutions in stronger spaces than those that appear in \cite{buckmasterNonuniquenessWeakSolutions2019b}.

\subsubsection{Epochs of regularity and gluing}

In order to specify the initial and final data and to obtain
almost-everywhere smoothness, we use the so-called ``epochs of regularity''
approach, as previously seen in \cite{derosaDimensionSingularSet2021,bulutEpochsRegularityWild2022},
where we optimize the gluing interval and try to glue newer local
solutions with older approximate solutions.  The presence of the force
in this case, however, actually allows us complete control over initial
data and ultimate data.  A new complication arises in this approach,
as we have to perform this modified gluing process for both systems
at once.  It therefore becomes necessary to isolate the resulting errors,
so that they do not ruin the material derivative estimates for the
convex integration in the active system.  The optimized derivative estimates mentioned in Subsubsection~\ref{oscillation} play a crucial role in estimating the gluing errors, as the only lower bound on the mollification length scale $\ell_q$ is the scale of the perturbation.

\subsection{Comparison to previous results}\label{comparison}

In groundbreaking work, Vishik \cite{vishikInstabilityNonuniquenessCauchy2018,vishikInstabilityNonuniquenessCauchy2018b} (see also the notes \cite{albrittonInstabilityNonuniqueness2d2022}) constructed an unstable two-dimensional vortex which, considered in self-similar coordinates, leads to non-unique solutions of (\ref{eq:Euler_start}) with vorticity in $L_t^\infty L_x^p$ for any large $p<\infty$.  In particular this implies that for the forced Euler equations on $\mathbb R^2$ (and, by a trivial extension, $\mathbb R^2\times\mathbb T$), there are non-unique solutions in $L_t^\infty C_x^{1-}$; thus $\alpha_U=1$.  We remark that recently, Bru\'e and De~Lellis \cite{bruedelellisAnomDissip} have also studied anomolous dissipation results for the forced Navier-Stokes equation in the vanishing viscosity limit (see also \cite{bruecolomboOnsagerCritical}).

While our convex integration approach is only able to show $\alpha_U\geq\frac12$, it has the advantage that the solutions constructed are genuinely $d$-dimensional ($d\geq3$) and non-uniqueness is exhibited from \emph{any} smooth data.\footnote{By an elementary gluing argument, non-uniqueness from a particular initial datum implies non-uniqueness from any initial data with an appropriately chosen force.  However, unlike in Theorem~\ref{thm:thm1}, the force cannot be expected to be continuous in time.  Moreover, this trivial gluing argument does not allow one to freely specify $u(T)$ and $v(T)$ as in the theorem.} We believe the failure of this method above $\alpha=\frac12$ is interesting in itself as a possible indication of the value of $\alpha_N$ for the forced equation.  While the non-unique solutions from \cite{vishikInstabilityNonuniquenessCauchy2018,vishikInstabilityNonuniquenessCauchy2018b} enjoy stronger regularity properties than those from Theorem~\ref{thm:thm1}, they do not appear to suggest any flexibility or genericity of the space of solutions.  Indeed, the solutions constructed there are restricted to the vicinity of a particular unstable manifold of a family of vortices.

On the other hand, Theorem~\ref{thm:thm1} is proved using convex integration which is well-known as a tool to prove h-principles for various problems.  From the success of convex integration for $\alpha<\frac12$ (Proposition~\ref{prop:iterative}) and some apparently serious issues when $\alpha>\frac12$ (see Subsection~\ref{subsec:strategy}), one is led to conjecture $\alpha_N=\frac12$ as the exact threshold for the forced Euler equations.  This hypothesis is bolstered by De~Lellis and Inauen \cite{delellisInauen} and Cao and Inauen \cite{caoRigidityFlexibilityIsometric2020} who identified (in a sense) $\alpha_N=\frac12$ for the related problem of isometric extension.  We remark that the precise definition of the h-principle for a forced system is not clear and perhaps not unique---dramatically different outcomes are possible depending on whether the force is allowed to vary in the weak approximation.  Finding a natural formulation of the h-principle for (\ref{eq:Euler_start}) and determining $\alpha_N$ is interesting and will be the subject of future work.

\subsection{Organization of the paper}

The paper is structured as follows: in Section~\ref{sec:notation} we introduce our notational conventions and formulate the iterative proposition for the alternating convex integration scheme.  In Section~\ref{sec:Proof-of-main-thm} we then show how this iterative proposition is used to prove Theorem~\ref{thm:thm1}.  In Section~\ref{sec:proofProp} we begin the proof of the iterative proposition by implementing the mollification and gluing steps (this includes a delicate part of the argument, proving suitable estimates on the glued fields at the ``good-bad'' and ``bad-bad'' interfaces).  In Section~\ref{sec:Convex-integration-and} we construct the perturbation and prove estimates on the resulting fields in order to close the iterative proposition.  The proof of the iterative proposition is then completed in Section~\ref{sec:proofProp2}.  A brief Appendix~\ref{app:parameters} records several comparison estimates for the parameters used in the convex integration construction.  For the reader's convenience we prove the local theory needed to execute gluing for the forced Euler system in Appendix~\ref{app:Local-existence-of}, and include a proof that the weak solutions we construct preserve the energy balance in Appendix~\ref{app:Onsager-exponent}.

\subsection*{Acknowledgements}

We are grateful to Dallas Albritton and Terence Tao for useful discussions.  The third author acknowledges support from a UCLA Dissertation Year Fellowship.

\section{Notation and formulation of the main iterative scheme}
\label{sec:notation}

\setlength{\parskip}{0pt plus1pt}

We now establish some basic notational conventions.  As usual, we write $A\lesssim_{x,\neg y}B$ to mean $A\leq CB$ where $C>0$
may depend on $x$ but not $y$.  Similarly, $A\sim_{x,\neg y}B$ denotes that we have both $A\lesssim_{x,\neg y}B$ and 
$B\lesssim_{x,\neg y}A$.  For $x\in\mathbb R$, we write $x+$ (or, analogously, $x-$) to mean that a given expression holds 
for all $y\in\left(x,x+\varepsilon\right)$, with $\varepsilon>0$ taken sufficiently small.

We will leave some dependence on parameters implicit when it is inessential for the argument.

\subsection{Function spaces and geometric preliminaries}
\label{subsec:geo}

For each $N\in\mathbb{N}_{0}$ and $\alpha\in\left(0,1\right)$, we consider the norms and semi-norms
\begin{align*}
\left\Vert f\right\Vert _{N} & =\left\Vert f\right\Vert _{C^{N}},\quad\left[f\right]_{N}=\left\Vert \nabla^{N}f\right\Vert _{0},\quad\left[f\right]_{N+\alpha}=\left[\nabla^{N}f\right]_{C^{0,\alpha}},
\end{align*}
and
\begin{align*}
\left\Vert f\right\Vert _{N+\alpha} & =\left\Vert f\right\Vert _{C^{N,\alpha}}{\,\coloneqq\,}\left\Vert f\right\Vert _{N}+\left[f\right]_{N+\alpha},
\end{align*}
where $\left[\,\cdot\,\right]_{C^{0,\alpha}}$ is the Holder seminorm.
In this context, we record the elementary inequality
\[
\left\Vert fg\right\Vert _{r}\lesssim\left\Vert f\right\Vert _{0}\left[g\right]_{r}+\left[f\right]_{r}\left\Vert g\right\Vert _{0}\quad\text{for any }r>0.
\]

\vspace{0.2in}

As in \cite{bulutEpochsRegularityWild2022}, we recall the Hodge decomposition
\[
\mathrm{Id}=\mathcal{P}_{1}+\mathcal{P}_{2}+\mathcal{P}_{3}
\]
where $\mathcal{P}_{1}{\,\coloneqq\,}d\left(-\Delta\right)^{-1}\delta$ and $\mathcal{P}_{2}{\,\coloneqq\,}\delta\left(-\Delta\right)^{-1}d$
and $\mathcal{P}_{3}$ maps to harmonic forms (cf. \cite[Section 5.8]{Taylor_PDE1}).
We note that $\mathcal{P}_{1}$ and $\mathcal{P}_{2}$ are Calder\'on-Zygmund
operators.  Note that $\delta=-\Div$, where $\left(\Div T\right)^{i_{1}...i_{k}}=\nabla_{j}T^{ji_{1}...i_{k}}$
for any tensor $T$.  Due to the musical isomorphism, the Hodge projections $\mathcal{P}_{i}$
are also defined on vector fields, and we also write $\sharp\mathcal{P}_{i}\flat$
as $\mathcal{P}_{i}$ for convenience (unless ambiguity arises).  Moreover, because the torus is flat, we have the identities 
\begin{align}
\delta\flat\left(X\cdot\nabla Y\right) & =\delta\flat\left(Y\cdot\nabla X\right)\label{eq:ident_P10}
\end{align}
and
\begin{align}
\mathcal{P}_{1}\left(X\cdot\nabla Y\right) & =\mathcal{P}_{1}\left(Y\cdot\nabla X\right)\label{eq:ident_P1}
\end{align}
for any pair of divergence-free vector fields $X,Y$.  We also recall that, on the torus, harmonic 1-forms (or vector fields) are precisely those 
which have mean zero.  

%For any diffeomorphism $\Phi$, vector field $u$ and differential form $\alpha$, we recall the pullback identity
%\begin{align}
%\Phi^{*}\left(\mathcal{L}_{u}\alpha\right) & =\mathcal{L}_{\Phi^{*}u}\Phi^{*}\alpha.\label{eq:pullback_lie}
%\end{align}
%
%Note that the pullback of a 1-form has a different meaning from the pullback of a vector field, and we do not have $\Phi^{*}\flat X=\flat\Phi^{*}X$ unless $\Phi$ is an isometry. 
%
%We will also find it useful to introduce a notion of time-dependent Lie derivative.  In particular, for 
%any smooth family of diffeomorphisms $\left(F_{t}\right)$ and differential forms $\left(\alpha_{t}\right)$ we have 
%\[
%\partial_{t}\left(F_{t}^{*}\alpha_{t}\right)=F_{t}^{*}\left(\mathcal{L}_{X_{t}}\alpha_{t}+\partial_{t}\alpha_{t}\right)
%\]
%where $\left(X_{t}\right)$ is a time-dependent vector field defined by $\partial_{t}F_{t}=X_{t}\circ F_{t}$.

\subsection{Leray projection, anti-divergence and Biot-Savart operators}

We define the usual Leray projection
\[
\mathbb{P}{\,\coloneqq\,}\mathcal{P}_{2}+\mathcal{P}_{3},
\]
and note that velocity fields of incompressible fluids are in the image of $\mathbb{P}$.

We will frequently make use of the antidivergence operator $\mathcal{R}:C^{\infty}\left(\mathbb{T}^{d},\mathbb{R}^{d}\right)\to C^{\infty}\left(\mathbb{T}^{d},\mathcal{S}_{0}^{d\times d}\right)$, defined by
\begin{align}
\left(\mathcal{R}v\right)_{ij} & =\mathcal{R}_{ijk}v^{k},\label{eq:antidiv}
\end{align}
\begin{align}
\mathcal{R}_{ijk} & {\,\coloneqq\,}-\frac{d-2}{d-1}\Delta^{-2}\partial_{i}\partial_{j}\partial_{k}-\frac{1}{d-1}\Delta^{-1}\partial_{k}\delta_{ij}+\Delta^{-1}\partial_{i}\delta_{jk}+\Delta^{-1}\partial_{j}\delta_{ik}.\label{eq:antidiv2} 
\end{align}

We will also frequently use the fact that $\div\mathcal{R}v=v-\dashint_{\mathbb{T}^{d}}v=\left(1-\mathcal{P}_{3}\right)v$
for any vector field $v$.  Moreover, via the musical isomorphism, $\mathcal{R}$
is also defined on 1-forms, and we also write $\mathcal{R}\sharp$
as $\mathcal{R}$ for convenience.

We define the higher-dimensional analogue of the Biot-Savart operator as 
\begin{align}
\mathcal{B}{\,\coloneqq\,}\left(-\Delta\right)^{-1}d\flat,\label{def-B}
\end{align}
mapping from vector fields to 2-forms.  Note that with this definition, we have 
\[
\sharp\delta\mathcal{B}=\mathcal{P}_{2},
\]
which implies that $\sharp\delta\mathcal{B}v=v-\dashint_{\mathbb{T}^{d}}v=\mathcal{P}_{2}v$ for any divergence-free vector field $v$.

\subsection{Formulation of the Main Iterative Scheme}
\label{subsec:formulation_iter}

As noted in the introduction, to prove Theorem~\ref{thm:thm1} we introduce a novel iterative construction, involving alternating 
applications of convex integration techniques.  To specify this further, we now introduce the main iteration lemma.  We begin by 
specifying several parameters.

Fix $\beta<\frac{1}{2}$ and $T\geq1$.  For any $q\in\mathbb{Z}_{\geq-1}$,
we set 
\begin{align}
\lambda_{q} & {\,\coloneqq\,}\left\lceil a^{(b^{q})}\right\rceil \\
\delta_{q} & {\,\coloneqq\,}\lambda_{q}^{-2\beta},
\end{align}
with $a\gg1$, $0<b-1\ll1$ (to be chosen later).  The quantity $\lambda_{q}$ will be the frequency parameter (made an integer for phase functions), while $\delta_{q}$ will be the pointwise size of the Reynolds stress.

For sufficiently small $\alpha>0$ (to be specified later in the argument) and any $q\in\mathbb{N}_{0}$, we set 
\begin{align}
\epsilon_{q} & {\,\coloneqq\,}\lambda_{q}^{-\sigma},\\
\ell_{q} & {\,\coloneqq\,}\lambda_{q}^{-\frac{1}{4}}\lambda_{q+1}^{-\frac{3}{4}},\label{eq:length_scale}
\end{align}
and
\begin{align}
\tau_{q} & {\,\coloneqq\,}C_{q}\delta_{q+1}^{-\frac{1}{2}}\lambda_{q}^{-1-3\alpha},\label{eq:tau_q}
\end{align}
where $C_{q}>0$ is an unremarkable constant which is chosen to make $\epsilon_{q-1}\tau_{q-1}\tau_{q}^{-1}$ an integer.  Note that, since $\epsilon_{q-1}\tau_{q-1}\tau_q^{-1}\gg1$, we can choose $C_q$ so that it is comparable to $1$ (independently of $q$); it follows that $C_q$ has no impact on the estimates and will be omitted in the sequel.

Here, $\ell_{q}$ is the mollification length scale which, unless otherwise noted, we refer to as $\ell$ for brevity, $\tau_{q}$ is the time scale for the local existence and gluing step, and $\epsilon_{q}\tau_{q}$ is the smaller time scale of the overlapping epoch between adjacent temporal cutoffs. 

We also set $\epsilon_{-1}=\lambda_{-1}^{-\sigma}$ and $\tau_{-1}$
to be any positive number such that 
\[
1\leq 15\epsilon_{-1}\tau_{-1}\leq T.
\]
We note that $a,b,\alpha,\sigma$ do not depend on $q$.

To formulate the main inductive hypothesis for our applications of convex integration, suppose that for some $q\in\mathbb{N}_{0}$ we have smooth fields $\left(u_{q},v_{q},F_{q},R_{q}\right)$
such that, for 
\begin{itemize}
\item a geometric constant $M>1$ depending only on $d$ (and not $a,\beta,b,\sigma,\alpha,q$) to be chosen later in \Secref{Convex-integration-and}, and
\item a positive sequence $\mathcal{A}=\left(A_{\kappa}\right)_{\kappa\in\mathbb{N}_{0}}$ and a sequence $\left(B_{\kappa}\right)_{\kappa\in\mathbb{N}_{0}}$ completely determined by $M$ (and therefore by $d$), with $(A_{\kappa})$ and $(B_{\kappa})$ both independent of $q$,
\end{itemize}
the following criteria hold:
\begin{enumerate}
\item there exist smooth pressures $p_{q}$ and $\pi_{q}$ solving the dual Euler-Reynolds systems (\ref{eq:forced_Euler_dual_system}) on $[0,T]\times\mathbb{T}^{d}$,
\item we have the estimates 
\begin{align}
\|v_{q}\|_{0},\|u_{q}\|_{0},\left\Vert F_{q}\right\Vert _{0} & \leq1-\delta_{q}^{1/2},\label{eq:uvF_sup}
\end{align}
and, for $1\leq \jmath\leq 12$,
\begin{align}
\|\grad^{\jmath}u_{q}\|_{0} & \leq M\lambda_{q}^{\jmath}\delta_{q}^{1/2},\label{eq:grad_uq_sup}\\
\|\nabla^{\jmath}v_{q}\|_{0} & \leq M\lambda_{q-1}^{\jmath}\delta_{q-1}^{1/2},\label{eq:grad_vq_sup}\\
\|\nabla^{\jmath}F_{q}\|_{0} & \leq M\epsilon_{q}\lambda_{q}^{\jmath-3\alpha}\delta_{q+1},\label{eq:Fq_sup}
\end{align}
as well as, for $0\leq \jmath\leq 12$,
\begin{align}
\|\nabla^{\jmath}R_{q}\|_{0} & \leq M\epsilon_{q}\lambda_{q}^{\jmath-3\alpha}\delta_{q+1},\label{eq:Rq_sup}
\end{align}
\item there exists a set of ``bad'' times $\mathcal{B}_{q}=\bigcup_{i}I_{i}^{b,q}$ which is a union of disjoint closed intervals $I_{i}^{b,q}$
of length $5\epsilon_{q-1}\tau_{q-1}$, and, defining the ``good'' times
\[
\mathcal{G}_{q}=[0,T]\setminus\mathcal{B}_{q}=\bigcup_{i}I_{i}^{g,q}
\]
consisting of disjoint open intervals $I_{i}^{g,q}$, we have 
\begin{equation}
R_{q}\big|_{\mathcal{G}_{q}+B(0,\epsilon_{q-1}\tau_{q-1})}\equiv0,\label{eq:Rq_zerowhere}
\end{equation}
where $\mathcal{G}_{q} +  B(0,\epsilon_{q-1}\tau_{q-1})$ denotes the $\epsilon_{q -1} \tau_{q-1}$-neighborhood of $\mathcal{G}_{q}$, and
\item for $t\in\mathcal{G}_{q}+B(0,\epsilon_{q-1}\tau_{q-1})$, we have, for $1\leq \jmath\leq 8$ and $\kappa\geq 0$,
\begin{align}
\|\grad^{\jmath+\kappa}v_{q}\|_{0}+\|\grad^{\jmath+\kappa}u_{q}\|_{0} & \leq\left(A_{\kappa}+B_{\kappa}\right)\ell_{q-1}^{-\kappa}\lambda_{q-1}^{\jmath}\delta_{q-1}^{1/2}, \label{eq:uq_vq_goodbounds}
\end{align}
and, for $1\leq \jmath\leq 10$ and $\kappa\geq 0$,
\begin{align}
\|\grad^{\jmath+\kappa}F_{q}\|_{0} & \leq\left(A_{\kappa}+B_{\kappa}\right)\ell_{q-1}^{-\kappa}\lambda_{q-1}^{\jmath-3\alpha}\delta_{q}.\label{eq:Fq_goodbounds}
\end{align}
\end{enumerate}

We are now ready to state our main iterative proposition.

\begin{prop}[Iteration scheme]
\label{prop:iterative}Fix $\beta<\frac{1}{2}$ and $T\geq1$, and suppose
that 
\begin{gather}
0<b-1\ll_{\beta}1,\label{eq:smallness_b}\\
0<\sigma<\frac{(b-1)(1-2b\beta)}{b+1},\label{eq:sigma_bound}\\
0<\alpha\ll_{\sigma,b,\beta}1,\label{eq:alpha_bound}
\end{gather}
and
\begin{align}
a\gg_{\mathcal{A},\alpha,\sigma,b,\beta}1.\nonumber 
\end{align}
Fixing $q\in\mathbb{N}_{0}$, if $(u_{q},v_{q},F_{q},R_{q},\mathcal{B}_{q})$
satisfy the assumptions (1)--(4) above, then there exist $(u_{q+1},v_{q+1},\allowbreak F_{q+1},\allowbreak R_{q+1},\mathcal{B}_{q+1})$
satisfying 
\begin{align}
\|v_{q+1}\|_{0},\|u_{q+1}\|_{0},\left\Vert F_{q+1}\right\Vert _{0} & \leq1-\delta_{q+1}^{1/2},\label{eq:vq_sup-1}
\end{align}
and, for $1\leq \jmath \leq 12$,
\begin{align}
\|\nabla^{\jmath}u_{q+1}\|_{0} & \leq M\lambda_{q}^{\jmath}\delta_{q}^{1/2},\label{eq:grad_uq_sup-1}\\
\|\nabla^{\jmath}v_{q+1}\|_{0} & \leq M\lambda_{q+1}^{\jmath}\delta_{q+1}^{1/2},\label{eq:grad_vq_sup-1}\\
\|\nabla^{\jmath}F_{q+1}\|_{0} & \leq M\epsilon_{q+1}\lambda_{q+1}^{\jmath-3\alpha}\delta_{q+2},\label{eq:Fq_sup-1}
\end{align}
as well as, for $0\leq \jmath\leq 12$,
\begin{align}
\|\nabla^{\jmath}R_{q+1}\|_{0} & \leq M\epsilon_{q+1}\lambda_{q+1}^{\jmath-3\alpha}\delta_{q+2}.\label{eq:Rq_sup-1}
\end{align}

Moreover, the estimates (\ref{eq:Rq_zerowhere})-(\ref{eq:Fq_goodbounds}) remain true with $q$ replaced by $q+1$, and we have
\begin{align}
\|v_{q}-v_{q+1}\|_{0}+\lambda{}_{q+1}^{-1}\|v_{q}-v_{q+1}\|_{1} & \leq M\delta_{q+1}^{1/2},\label{eq:iter_prop_vq_est}\\
\|u_{q}-u_{q+1}\|_{0}+\lambda{}_{q+1}^{-1}\|u_{q}-u_{q+1}\|_{1} & \leq M\delta_{q+1}^{1/2},\label{eq:iter_prop_uq_est}\\
\|F_{q}-F_{q+1}\|_{0}+\lambda{}_{q+1}^{-1}\|F_{q}-F_{q+1}\|_{1} & \leq M\delta_{q+1},\label{eq:iter_prop_Fq_est}
\end{align}
and
\begin{align}
v_{q+1}=v_{q},\;u_{q+1}=u_{q},\;F_{q+1}=F_{q} & \text{ on }\mathcal{G}_{q}\times\mathbb{T}^{d}\label{eq:uvF_on_Gq},
\end{align}
with
\begin{align}
\mathcal{G}_{q}\subset\mathcal{G}_{q+1},\left|\mathcal{B}_{q+1}\right| & \leq\epsilon_{q}\left|\mathcal{B}_{q}\right|.\label{eq:Gq_Bq}
\end{align}
\end{prop}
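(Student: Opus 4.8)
The plan is to implement the alternating convex integration scheme sketched in Subsection~\ref{subsec:strategy}, organized into three major stages: (i) a mollification and gluing step that replaces $(u_q, v_q, F_q, R_q)$ by glued fields with good derivative bounds away from a slightly enlarged bad set; (ii) the construction of the principal perturbation $w_{q+1}$ on the ``active'' system using Mikado/Beltrami-type building blocks as in \cite{buckmasterOnsagerConjectureAdmissible2017}, together with a corrector to maintain incompressibility; and (iii) the estimation of the new Reynolds stress and the transfer of the leftover error onto the force, so that $F_{q+1} \coloneqq F_q^{\mathrm{glued}} + \widetilde R_{q+1}$ and $R_{q+1} \coloneqq -\widetilde R_{q+1}$ (with roles of $u$ and $v$ swapped from step $q$ to step $q+1$). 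Throughout, one tracks which of the two velocity fields carried the stress at level $q$: that field gets perturbed, the other is left untouched (up to gluing), which is precisely what yields the improved bound $\|v_q\|_{C^{1,\alpha}} \lesssim \lambda_{q-1}^{1+\alpha}\delta_{q-1}^{1/2}$ encoded in \eqref{eq:grad_vq_sup} and propagated through \eqref{eq:grad_vq_sup-1}.

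First I would carry out the gluing. Using the local existence theory for the forced Euler system (Lemma~\ref{lem:local_existence}) and Lemma~\ref{stationaryphase}-type stationary phase estimates, I mollify $(u_q, v_q, F_q)$ at scale $\ell = \ell_q$ and solve exact forced Euler on overlapping time intervals of length $\sim \tau_q$, then glue these exact solutions together with partition-of-unity cutoffs adapted to the good/bad time decomposition. The key subtlety flagged in the introduction is that \emph{both} systems must be glued simultaneously, and the gluing must be arranged so that the resulting Reynolds-type errors are supported in the bad set $\mathcal B_{q+1}$ (with the containment $\mathcal G_q \subset \mathcal G_{q+1}$ and $|\mathcal B_{q+1}| \le \epsilon_q |\mathcal B_q|$ coming from the choice of $\tau_q$ and $\epsilon_q$), and so that the optimized derivative bounds \eqref{eq:uq_vq_goodbounds}--\eqref{eq:Fq_goodbounds} survive — here the lower bound $\ell_q \gtrsim \lambda_{q+1}^{-1}$ (the perturbation scale) is exactly what makes the gluing errors affordable, as noted in Subsubsection~\ref{oscillation}. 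I would isolate the ``good-bad'' and ``bad-bad'' interface estimates as separate lemmas, since these are the delicate part; the point is that newer local solutions are glued with older approximate solutions across the interfaces, and one must show the transition does not destroy the material-derivative control on the glued Reynolds stress.

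Next I would build the perturbation $w_{q+1} = w_{q+1}^{(p)} + w_{q+1}^{(c)}$ on the glued active field, with the principal part a sum $\sum_\xi a_\xi(t,x)\, W_\xi(\lambda_{q+1} x)$ of Beltrami flows (or Mikado flows) with amplitudes $a_\xi$ determined by the glued stress $R_q^{\mathrm{glued}}$ via the geometric lemma (this is the source of the constant $M$ depending only on $d$), and a divergence corrector $w_{q+1}^{(c)}$. The size estimate $\|w_{q+1}\|_0 \lesssim \delta_{q+1}^{1/2}$ and $\|w_{q+1}\|_1 \lesssim \lambda_{q+1}\delta_{q+1}^{1/2}$ give \eqref{eq:iter_prop_vq_est}--\eqref{eq:iter_prop_uq_est} and, combined with the old bounds on $v_q$, yield \eqref{eq:vq_sup-1} and \eqref{eq:grad_vq_sup-1}; the untouched field trivially inherits \eqref{eq:grad_uq_sup-1}. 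Then one computes the new error $R_{q+1}$, which decomposes into the oscillation error $\mathcal R\,\divop(w^{(p)} \otimes w^{(p)} - R_q^{\mathrm{glued}})$, the linear (Nash and transport) errors $\mathcal R(w_{q+1}\cdot\nabla \overline v_q)$ and $\mathcal R(\partial_t + \overline v_q\cdot\nabla)w_{q+1}$, corrector errors, and gluing errors. The oscillation error is controlled by \eqref{onehalfconstraint} and the improved $\|R_\ell\|_1 \lesssim \lambda_q\delta_{q+1}$ bound; the linear errors are controlled by \eqref{onsagerconstraint} in the \emph{improved} form $\lambda_{q+1}^{-1+O(\alpha)}\lambda_{q-1}\delta_{q-1}^{1/2}\delta_{q+1}^{1/2} \lesssim \delta_{q+2}$, which holds for all $\beta < \frac12$ precisely under the parameter constraints \eqref{eq:smallness_b}--\eqref{eq:alpha_bound}. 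Finally, setting $F_{q+1} \coloneqq F_q^{\mathrm{glued}} + \widetilde R_{q+1}$ and $R_{q+1} \coloneqq -\widetilde R_{q+1}$ moves the stress onto the passive equation; \eqref{eq:iter_prop_Fq_est}, \eqref{eq:Fq_sup-1}, \eqref{eq:Rq_sup-1} then follow from the error estimates plus the old force bounds (note $\|F_q\|_0 \le 1 - \delta_q^{1/2}$ must be upgraded carefully since the force accumulates — this works because $\sum_q \delta_{q+1} < \infty$ with room to spare), and the derivative bounds \eqref{eq:Fq_sup}, \eqref{eq:Rq_sup} with $q+1$ come from $\epsilon_q\lambda_q \ll \epsilon_{q+1}\lambda_{q+1}$-type comparisons recorded in the parameter appendix.

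The main obstacle, as the authors themselves emphasize, is the simultaneous gluing of the two coupled systems while keeping the gluing errors (a) supported in the small bad set and (b) compatible with the sharp material-derivative estimates needed to close the convex integration on the active system. In the standard single-system Onsager scheme the gluing errors are absorbed comfortably, but here the force $F_q$ appears in \emph{both} local existence problems and is itself only as regular as $C^{2\beta-}_x$, so the glued fields have slightly degraded bounds compared to $v_{q-1}$ (as flagged in the footnote after \eqref{onsagerconstraint}); one must verify that this degradation is harmless. Concretely, the delicate point is the interface analysis: across a ``good-bad'' boundary the glued solution transitions from an exact solution of the active system to an old approximate one, and one must show the commutator/gluing stress there still obeys $\|\cdot\|_0 \lesssim \delta_{q+2}$ together with the right number of spatial and material derivatives — this is where the optimized bounds on $\nabla^\jmath v_q$, $\nabla^\jmath R_q$ for $\jmath$ up to $8$ or $12$ (rather than just $\jmath \le 2$) are indispensable, and getting the bookkeeping of the two sequences $(A_\kappa), (B_\kappa)$ to close without circularity is the technical heart of the argument.
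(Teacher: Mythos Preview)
Your proposal is broadly correct and aligned with the paper's strategy, but two discrepancies are worth flagging.

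First, you conflate Proposition~\ref{prop:iterative} with the force-transfer step. In the paper, the proposition does \emph{not} move the stress onto the passive equation: it sets $F_{q+1}\coloneqq\overline{F}_q$ (the glued force only) and $R_{q+1}\coloneqq\chi^g F_q+\chi^b F_\ell-\overline{F}_q+\widetilde R_{q+1}$, so the output still has the Reynolds stress on the $v$-equation in the form \eqref{eq:forced_Euler_dual_system}. The transfer $F\mapsto F+R$, $R\mapsto -R$ together with the label swap $(u,v)\mapsto(v,u)$ happens externally, in Corollary~\ref{cor:iteration_corollary}. Your merged version could be made to work, but it does not produce the output form the proposition actually claims.

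Second, and more substantively, the two fields are \emph{not} glued symmetrically, and your description (``solve exact forced Euler on overlapping time intervals'' for both) misses this. The passive field is glued directly as $\overline{u}_q=\chi^g u_q+\chi^b u_\ell$ with no local existence step; the resulting commutator errors are absorbed into a modified force $\overline{F}_q$ via \eqref{eq:Fbarq_formula}. The active field $\overline{v}_q$ is glued via local existence with the \emph{different} force $\chi^g F_q+\chi^b F_\ell$. This force mismatch is precisely the extra term $\chi^g F_q+\chi^b F_\ell-\overline{F}_q$ in $R_{q+1}$, controlled by \eqref{eq:commutator_term_in_Fq}; your formula $R_{q+1}=-\widetilde R_{q+1}$ omits it. The term is of acceptable size, but the asymmetric treatment is what makes the passive field genuinely untouched (so \eqref{eq:grad_uq_sup-1} is trivial) while still allowing the $u$-gluing errors to land in the force rather than in the stress that convex integration must cancel.
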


It is important to note that in the statement of \Propref{iterative}, the parameters $b$, $\sigma$, $\alpha$, and $a$ depend only on $\beta$ and $d$.  In particular, they do not depend on $q$ or $T$ (having assumed $T\geq1$).  We will prove \Propref{iterative} in Section \ref{sec:proofProp} through Section \ref{sec:proofProp2} below.  

We conclude this section with a few comments on the inductive assumptions (1)--(4).  The parameters $\epsilon_{q}$ in (\ref{eq:Fq_sup}) and (\ref{eq:Rq_sup}) serve to compensate for the sharp time cutoffs in our gluing construction; this strategy was previously used in \cite{derosaDimensionSingularSet2021,bulutEpochsRegularityWild2022} to obtain convex integration constructions with epochs of regularity.

The role of $A_{\kappa}$ and $B_{\kappa}$ in (\ref{eq:uq_vq_goodbounds})-(\ref{eq:Fq_goodbounds}) is subtle.  The bound by $A_{\kappa}$ is included to ensure that the bounds are satisfied in the initial iteration (when $q=0$).  On the other hand, $B_{\kappa}$ is determined by $M$ within the construction.  In particular, $B_{\kappa}$ must not depend on $B_{\kappa+1}$ or $B_{\kappa+2}$ (which is a loss of derivatives issue), and we have the following diagram: 

\begin{tabular}{ccV{\linewidth}}
\noalign{\vskip0.2cm}
\begin{cellvarwidth}[t]
\centering
$M$ from (\ref{eq:grad_uq_sup})-(\ref{eq:Rq_sup})

(finitely many estimates)
\end{cellvarwidth} & $\leadsto$ & Determine every $B_{\kappa}$ in (\ref{eq:uq_vq_goodbounds})-(\ref{eq:Fq_goodbounds})

with $q$ replaced by $q+1$.\tabularnewline
\noalign{\vskip0.5cm}
\begin{cellvarwidth}[t]
\centering
Finitely many $A_{\kappa}+B_{\kappa}$

from (\ref{eq:uq_vq_goodbounds})-(\ref{eq:Fq_goodbounds})
\end{cellvarwidth} & $\leadsto$ & Recover $M$ in (\ref{eq:grad_uq_sup-1})-(\ref{eq:iter_prop_Fq_est})

with $q$ replaced by $q+1$ (giving finitely 

many lower bounds for $a$).\tabularnewline
\end{tabular}

\vspace{.13in}

Note that we need to keep track of constants when determining the sequence $\left(B_{\kappa}\right)$ (in (\ref{eq:Fl_estimate}), (\ref{eq:Bk_2nd_choice}), (\ref{eq:vj_bound})), to avoid the loss of derivatives.  Apart from this issue, we will suppress the dependence on $A_{\kappa}$ and $B_{\kappa}$ within the notation $\ls_{\kappa}$.

\section{Proof of Theorem~\ref{thm:thm1}} \label{sec:Proof-of-main-thm}

%\subsection{Proof of Theorem~\ref{thm:thm1} from Proposition \ref{prop:iterative} and Corollary \ref{cor:iteration_corollary}}

In this section, we give the proof of Theorem~\ref{thm:thm1}, using the main iterative proposition, \Propref{iterative}.  Indeed, the first observation in this direction is that \Propref{iterative} has the following immediate consequence.

\begin{cor}
\label{cor:iteration_corollary} Let $(u_q,v_q,F_q,R_q,\mathcal{B}_q)$ be as in \Propref{iterative}, and let $(u_{q+1},v_{q+1},\allowbreak F_{q+1},R_{q+1},\mathcal{B}_{q+1})$
be as constructed in \Propref{iterative}.  Then, setting $$\widetilde{F}_{q+1}:=F_{q+1}+R_{q+1}, \quad \widetilde{R}_{q+1}:=-R_{q+1},$$
let $(v_{q+2},u_{q+2},\allowbreak\widetilde{F}_{q+2},\allowbreak\widetilde{R}_{q+2},\mathcal{B}_{q+2})$ be the result of applying
\Propref{iterative} to $$(v_{q+1},u_{q+1},\allowbreak\widetilde{F}_{q+1},\allowbreak\widetilde{R}_{q+1},\mathcal{B}_{q+1}).$$

Then, setting $$F_{q+2}:=\widetilde{F}_{q+2}+\widetilde{R}_{q+2}\quad\textrm{and}\quad R_{q+2}:=-\widetilde{R}_{q+2},$$ we have that (\ref{eq:forced_Euler_dual_system}), (\ref{eq:uvF_sup})-(\ref{eq:Rq_sup}), and (\ref{eq:Rq_zerowhere})-(\ref{eq:uq_vq_goodbounds}) all hold with $q$ replaced by $q+2$.  Moreover, (\ref{eq:iter_prop_vq_est})-(\ref{eq:iter_prop_Fq_est}) also holds with $q$ replaced by $q+1$.
\end{cor}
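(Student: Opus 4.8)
The idea is that the Corollary is essentially a bookkeeping statement: it packages two applications of \Propref{iterative} together with the ``swap'' of the roles of the two velocity fields, and tracks how the substitutions $\widetilde F_{q+1}=F_{q+1}+R_{q+1}$, $\widetilde R_{q+1}=-R_{q+1}$ (and the analogous ones at level $q+2$) interact with the inductive hypotheses (1)--(4). The first thing I would check is that $(v_{q+1},u_{q+1},\widetilde F_{q+1},\widetilde R_{q+1},\mathcal B_{q+1})$ actually satisfies the hypotheses (1)--(4) of \Propref{iterative} \emph{with the roles of $u$ and $v$ interchanged}, i.e.\ with $v_{q+1}$ playing the role of the field whose gradient is controlled by $M\lambda_q^\jmath\delta_q^{1/2}$ (cf.\ (\ref{eq:grad_uq_sup})) and $u_{q+1}$ playing the role of the field controlled by $M\lambda_q^\jmath\delta_q^{1/2}$ as in (\ref{eq:grad_vq_sup}) with index shifted. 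Here I would point out: after the $(q)\to(q+1)$ step of \Propref{iterative}, $v_{q+1}$ was perturbed (so it has the ``fresh'' bound (\ref{eq:grad_vq_sup-1}) at frequency $\lambda_{q+1}$) while $u_{q+1}$ was \emph{not} perturbed (so it retains (\ref{eq:grad_uq_sup-1}) at frequency $\lambda_q$). This is exactly the configuration required to apply \Propref{iterative} again with $u\leftrightarrow v$: the ``old, un-perturbed'' field $u_{q+1}$ is the one that will be perturbed next, and its gradient bound $M\lambda_q^\jmath\delta_q^{1/2}$ is precisely $M\lambda_{(q+1)-1}^\jmath\delta_{(q+1)-1}^{1/2}$, matching the hypothesis on the ``to-be-perturbed'' field at level $q+1$.

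Next I would verify that the algebraic substitution is consistent with the dual Euler--Reynolds system (\ref{eq:forced_Euler_dual_system}). Since $(u_{q+1},v_{q+1})$ solve (\ref{eq:forced_Euler_dual_system}) at level $q+1$ with force $\div F_{q+1}$ and stress $\div R_{q+1}$ on the $v$-equation, replacing $(F_{q+1},R_{q+1})$ by $(\widetilde F_{q+1},\widetilde R_{q+1})=(F_{q+1}+R_{q+1},-R_{q+1})$ and swapping the labels moves the stress onto the $u_{q+1}$-equation: $\div\widetilde F_{q+1}+\div\widetilde R_{q+1}=\div F_{q+1}$, so $v_{q+1}$ now solves the un-erred equation and $u_{q+1}$ the erred one, exactly as in the display after (\ref{eq:forced_Euler_dual_system}). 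The size bounds transfer immediately: $\|\nabla^\jmath\widetilde R_{q+1}\|_0=\|\nabla^\jmath R_{q+1}\|_0$ satisfies (\ref{eq:Rq_sup}) at level $q+1$, and $\|\nabla^\jmath\widetilde F_{q+1}\|_0\le\|\nabla^\jmath F_{q+1}\|_0+\|\nabla^\jmath R_{q+1}\|_0\lesssim M\epsilon_{q+1}\lambda_{q+1}^{\jmath-3\alpha}\delta_{q+2}$, which is again the required form (up to adjusting $M$, or rather observing the geometric constant $M$ in \Propref{iterative} absorbs the factor of $2$). The support condition (\ref{eq:Rq_zerowhere}) is unchanged since $\widetilde R_{q+1}=-R_{q+1}$, and the good-time bounds (\ref{eq:uq_vq_goodbounds})--(\ref{eq:Fq_goodbounds}) hold for $\widetilde F_{q+1}$ because on $\mathcal G_{q+1}+B(0,\epsilon_q\tau_q)$ we have $R_{q+1}\equiv0$ and hence $\widetilde F_{q+1}=F_{q+1}$ there.

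Granting all of this, I apply \Propref{iterative} to $(v_{q+1},u_{q+1},\widetilde F_{q+1},\widetilde R_{q+1},\mathcal B_{q+1})$ to obtain $(v_{q+2},u_{q+2},\widetilde F_{q+2},\widetilde R_{q+2},\mathcal B_{q+2})$, and then perform the reverse substitution $F_{q+2}=\widetilde F_{q+2}+\widetilde R_{q+2}$, $R_{q+2}=-\widetilde R_{q+2}$ to put the stress back on the $v$-equation. Reading off the conclusions of \Propref{iterative} (with $u\leftrightarrow v$ and the $F$-substitution undone) gives (\ref{eq:forced_Euler_dual_system}), (\ref{eq:uvF_sup})--(\ref{eq:Rq_sup}), (\ref{eq:Rq_zerowhere})--(\ref{eq:uq_vq_goodbounds}) at level $q+2$: in particular $u_{q+2}$ now carries the fresh frequency-$\lambda_{q+2}$ bounds and $v_{q+2}$ the frequency-$\lambda_{q+1}$ bounds, which is exactly (\ref{eq:grad_vq_sup}) and (\ref{eq:grad_uq_sup}) at $q+2$ after restoring the original labeling convention. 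The displacement estimates (\ref{eq:iter_prop_vq_est})--(\ref{eq:iter_prop_Fq_est}) at level $q+1$ come directly from applying those same estimates in the second invocation of \Propref{iterative} (they compare level $q+1$ and level $q+2$ in the swapped variables, which is precisely the $q\to q+1$ instance we want), noting $\|F_{q+1}-F_{q+2}\|_0\le\|\widetilde F_{q+1}-\widetilde F_{q+2}\|_0+\|R_{q+1}\|_0+\|R_{q+2}\|_0\lesssim M\delta_{q+2}$ since $\|R_{q+1}\|_0$, $\|R_{q+2}\|_0\lesssim M\epsilon_{q+1}\lambda_{q+1}^{-3\alpha}\delta_{q+2}\ll\delta_{q+2}$. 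The main thing to be careful about---and the only real ``obstacle''---is the loss-of-derivatives / constant-tracking issue flagged in the discussion after \Propref{iterative}: the substitution $\widetilde F_{q+1}=F_{q+1}+R_{q+1}$ adds two objects each controlled with the \emph{same} geometric constant $M$, so one must confirm that $M$ (chosen in \Secref{Convex-integration-and} depending only on $d$) is taken large enough to absorb these additive factors of $2$ uniformly, and that the good-bounds sequences $(A_\kappa),(B_\kappa)$ are unaffected by the substitution; since only finitely many of these bounds enter and the factor is a harmless constant, this causes no circularity.
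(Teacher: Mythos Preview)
Your proposal is correct and follows exactly the bookkeeping the paper intends; in fact the paper does not write out a proof at all, simply calling the corollary an ``immediate consequence'' of \Propref{iterative}. Your only flagged obstacle---the factor-of-$2$ in the constants when forming $\widetilde F_{q+1}=F_{q+1}+R_{q+1}$---is handled implicitly in the paper because the actual output bounds on $F_{q+1}$ and $R_{q+1}$ (see (\ref{eq:F_q+1_bound_noloss}), (\ref{eq:Fq+1_0}), (\ref{eq:finalR})) carry an extra $\lambda_{q+1}^{-\alpha}$ or $\frac12$ of slack relative to the stated hypotheses, so the sum still satisfies (\ref{eq:uvF_sup}) and (\ref{eq:Fq_sup}) with the same $M$ once $a$ is large.
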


By combining \Propref{iterative} and \Corref{iteration_corollary}, we create a closed iteration loop, in what we call \emph{alternating convex integration}.

\begin{proof}[Proof of Theorem~\ref{thm:thm1}]
Define $V_{1}(t,x){\,\coloneqq\,}V_{1}(x)$ for all $t\in[0,T]$, and note that by the usual local existence of smooth (unforced) Euler solutions, we obtain exact Euler solutions $V_{2}(t,x)$ and $V_{3}(t,x)$ for $t\in[T-\varepsilon,T]$ with $V_2(T,x)=V_2(x)$ and $V_3(T,x)=V_3(x)$, where $\varepsilon\in\left(0,\frac{T}{4}\right)$ depends on $V_{2}$ and $V_{3}$. 

Observe that 
\[
\int_{\mathbb{T}^{d}}\partial_{t}V_{i}=0,\quad i\in \{2,3\},
\]
so that $\int_{\mathbb{T}^{d}}V_{1}(t)=\int_{\mathbb{T}^{d}}V_{2}(t)=\int_{\mathbb{T}^{d}}V_{3}(t)$
for all $t$.

Let $\eta$ be a smooth temporal cutoff on $[0,T]$ such that $\mathbf{1}_{[0,T-\frac{3}{5}\varepsilon]}\geq\eta\geq\mathbf{1}_{[0,T-\frac{2}{5}\varepsilon]}$, and set
\begin{align*}
v_{0}{\,\coloneqq\,}\eta V_{1}+\left(1-\eta\right)V_{2},\quad u_{0}{\,\coloneqq\,}\eta V_{1}+\left(1-\eta\right)V_{3}.
\end{align*}

We observe that 
\begin{align*}
\partial_{t}u_{0}+\divop(u_{0}\otimes u_{0}) & =\partial_{t}\eta\left(V_{1}-V_{3}\right)+\eta\left(\partial_{t}V_{1}+\divop(V_{1}\otimes V_{1})\right)\\
&\hspace{0.2in} +\left(1-\eta\right)\left(\partial_{t}V_{3}+\divop(V_{3}\otimes V_{3})\right)\\
&\hspace{0.2in} +\left(\eta^{2}-\eta\right)\divop\left(\left(V_{1}-V_{3}\right)\otimes\left(V_{1}-V_{3}\right)\right).
\end{align*}
Note that $\partial_{t}V_{1}=0$, and as $V_{3}$ is an exact Euler
solution there is a smooth pressure $P_{3}$ such that $\partial_{t}V_{3}+\divop(V_{3}\otimes V_{3})=-\nabla P_{3}$. 

Let $\mathcal{R}$ denote the antidivergence operator defined in (\ref{eq:antidiv})--(\ref{eq:antidiv2}),
and define
\begin{align*}
F_{0} & {\,\coloneqq\,}\partial_{t}\eta\mathcal{R}\left(V_{1}-V_{3}\right)+\eta V_{1}\otimes V_{1}-\eta\left(1-\eta\right)\left(V_{1}-V_{3}\right)\otimes\left(V_{1}-V_{3}\right),\\
R_{0} & {\,\coloneqq\,}\partial_{t}\eta\mathcal{R}\left(V_{1}-V_{2}\right)+\eta V_{1}\otimes V_{1}-\eta\left(1-\eta\right)\left(V_{1}-V_{2}\right)\otimes\left(V_{1}-V_{2}\right)-F_{0}.
\end{align*}
Then $\left(u_{0},v_{0},F_{0},R_{0}\right)$ is a smooth solution of (\ref{eq:forced_Euler_dual_system}).  We note that $R_{0}=0$ on $[0,T-\frac{3}{5}\varepsilon]\cup[T-\frac{2}{5}\varepsilon,T]$.  Thus we can set $\mathcal{B}_{0}=[T-\frac{2}{3}\varepsilon,T-\frac{1}{3}\varepsilon].$

We now rescale in time, setting, for $\zeta>0$,
\begin{align*}
u_{0}^{\zeta}\left(t,x\right) & {\,\coloneqq\,}\zeta u_{0}\left(\zeta t,x\right),\quad F_{0}^{\zeta}{\,\coloneqq\,}\zeta^{2}F_{0}\left(\zeta t,x\right)\\
v_{0}^{\zeta}\left(t,x\right) & {\,\coloneqq\,}\zeta v_{0}\left(\zeta t,x\right),\quad R_{0}^{\zeta}{\,\coloneqq\,}\zeta^{2}R_{0}\left(\zeta t,x\right)\\
\mathcal{B}_{0}^{\zeta} & {\,\coloneqq\,}\zeta^{-1}\mathcal{B}_{0}
\end{align*}

For $\zeta$ small enough (depending on $V_{1}$, $V_{2}$,
$V_{3}$, and $T$), the five-tuple $$(u_{0}^{\zeta},v_{0}^{\zeta},F_{0}^{\zeta},R_{0}^{\zeta},\mathcal{B}_{0}^{\zeta})$$
satisfies the conditions (\ref{eq:uvF_sup})-(\ref{eq:Rq_sup}) for \Propref{iterative}
on the interval $[0,\zeta^{-1}T]$ where $\zeta^{-1}T\geq\zeta^{-1}\varepsilon\geq1$.
For the first step of the induction, we pick any positive $\tau_{-1}$
such that $5\epsilon_{-1}\tau_{-1}=\frac{1}{3}\zeta^{-1}\varepsilon$.
Then (\ref{eq:Rq_zerowhere}) is satisfied.  We also pick $\left(A_{\kappa}\right)_{\kappa\in\mathbb{N}_{0}}$
so that (\ref{eq:uq_vq_goodbounds})-(\ref{eq:Fq_goodbounds}) are
satisfied.

By iteratively applying \Propref{iterative} and \Corref{iteration_corollary}, we obtain
a sequence $$\left(u_{q}^{\zeta},v_{q}^{\zeta},F_{q}^{\zeta},R_{q}^{\zeta},\mathcal{B}_{q}^{\zeta}\right)$$ 
such that $\left(u_{q}^{\zeta}\right)_{q\in\mathbb{N}_{0}}$ converges in $C_{t}^{0}C_{x}^{\beta-}$ to some $u^{\zeta}$,
$\left(v_{q}^{\zeta}\right)_{q\in\mathbb{N}_{0}}$ converges in $C_{t}^{0}C_{x}^{\beta-}$ to some $v^{\zeta}$, and
$\left(F_{q}^{\zeta}\right)_{q\in\mathbb{N}_{0}}$ converges in $C_{t}^{0}C_{x}^{2\beta-}$ to some $F^{\zeta}$, with
$$R_{q}^{\zeta}\rightarrow 0\quad\textrm{in} C_{t,x}^0,$$
$$\mathcal{B}_{q+1}^{\zeta}\subset\mathcal{B}_{q}^{\zeta},$$
and such that one has $v^{\zeta}=v_{q}^{\zeta}$, $u^{\zeta}=u_{q}^{\zeta}$, and $F^{\zeta}=F_{q}^{\zeta}$ on $\mathcal{G}_{q}^{\zeta}\times\mathbb{T}^{d}$, and are thus smooth.

In particular, $u^{\zeta}$ and $v^{\zeta}$ are weak solutions of
(\ref{eq:Euler_start}).  We can then revert the time-rescaling by setting
\begin{align*}
u_{q}\left(t,x\right) & {\,\coloneqq\,}\zeta^{-1}u_{q}^{\zeta}\left(\zeta^{-1}t,x\right), & u\left(t,x\right) & {\,\coloneqq\,}\zeta^{-1}u^{\zeta}\left(\zeta^{-1}t,x\right),\\
v_{q}\left(t,x\right) & {\,\coloneqq\,}\zeta^{-1}v_{q}^{\zeta}\left(\zeta^{-1}t,x\right), & v\left(t,x\right) & {\,\coloneqq\,}\zeta^{-1}v^{\zeta}\left(\zeta^{-1}t,x\right),\\
F_{q}\left(t,x\right) & {\,\coloneqq\,}\zeta^{-2}F_{q}^{\zeta}\left(\zeta^{-1}t,x\right), & F\left(t,x\right) & {\,\coloneqq\,}\zeta^{-2}F^{\zeta}\left(\zeta^{-1}t,x\right),
\end{align*}
\begin{align*}
\mathcal{B}_{q} & {\,\coloneqq\,}\zeta\mathcal{B}_{q}^{\zeta}.
\end{align*}
 The bad set of times in the limit is then
\[
\mathcal{B}{\,\coloneqq\,}\bigcap_{q}\mathcal{B}_{q}.
\]
By the same calculations as in \cite[Proof of Theorem 1]{bulutEpochsRegularityWild2022}, we have
\begin{align*}
\dim_{\mathcal H}(\mathcal{B})\leq1-\frac{\sigma b}{\left(b-1\right)\left(1+3\alpha+\sigma-\beta\right)}.
\end{align*}
Upon choosing $\alpha$ sufficiently small, then $\sigma$
sufficiently close to $\frac{(b-1)\left(1-2b\beta\right)}{b+1}$, and finally
$b>1$ sufficiently close to $1$, we obtain 
\[
\dim_{\mathcal H}(\mathcal{B})\leq\left(\frac{1}{2(1-\beta)}\right)^{+}
\]
as desired.

Now we show the time-regularity of $u$ (and similarly for $v$).
Let $\beta>\beta_{1}>\beta_{2}\gg\mu>0$.  Then $u\in C_{t}^{0}C_{x}^{\beta_{1}}$.
Let $\psi_{\ell}$ be a smooth standard radial mollifier in space
of length $\ell$.  For any $\varepsilon>0$ small, we write
\begin{align*}
u^{\varepsilon} & =u*\psi_{\varepsilon} & \left(u\otimes u\right)^{\varepsilon} & =\left(u\otimes u\right)*\psi_{\varepsilon} & F^{\varepsilon}=F*\psi_{\varepsilon}
\end{align*}
Observe that $\partial_{t}u^{\varepsilon}+\mathbb{P}\divop\left(u\otimes u\right)^{\varepsilon}=\mathbb{P}\divop F^{\varepsilon}$,
so 
\begin{align*}
\left\Vert \partial_{t}u^{\varepsilon}\right\Vert _{\mu} & \ls_{\mu}\left\Vert \mathbb{P}\left(\divop\left(u\otimes u\right)^{\varepsilon}\right)\right\Vert _{\mu}+\left\Vert \mathbb{P}\divop F^{\varepsilon}\right\Vert _{\mu}\\
 & \ls_{\beta_{1},\mu}\varepsilon^{\beta_{1}-1-\mu}\left\Vert u\right\Vert _{\beta_{1}}^{2}+\varepsilon^{2\beta_{1}-1-\mu}\left\Vert F\right\Vert _{2\beta_{1}}
\end{align*}

and
\begin{align*}
\left\Vert u^{2\varepsilon}-u^{\varepsilon}\right\Vert _{C_{x}^{0}C_{t}^{\beta_{2}}} & \ls_{\beta_{2},\mu}\left\Vert u^{2\varepsilon}-u^{\varepsilon}\right\Vert _{0}^{1-\beta_{2}}\left\Vert \partial_{t}u^{2\varepsilon}-\partial_{t}u^{\varepsilon}\right\Vert _{\mu}^{\beta_{2}}\\
 & \ls_{\beta_{1},\beta_{2},\mu}\left(\varepsilon^{\beta_{1}}\left\Vert u\right\Vert _{\beta_{1}}\right)^{1-\beta_{2}}\left(\varepsilon^{\beta_{1}-1-\mu}\left(\left\Vert u\right\Vert _{\beta_{1}}^{2}+\left\Vert F\right\Vert _{2\beta_{1}}\right)\right)^{\beta_{2}}
\end{align*}
The power of $\varepsilon$ is $\beta_{1}\left(1-\beta_{2}\right)+\left(\beta_{1}-1-\mu\right)\beta_{2}=\beta_{1}-\beta_{2}-\mu\beta_{2}$.
It follows that if we choose $\mu=\mu\left(\beta_{1},\beta_{2}\right)$ small enough,
then $\beta_{1}-\beta_{2}-\mu\beta_{2}>0$ and $\left(u^{2^{-n}}\right)_{n\in\mathbb{N}_{1}}$
converges in $C_{x}^{0}C_{t}^{\beta_{2}}$ by geometric series.  Then
$u\in C_{t,x}^{\beta_{2}}$ and so is $v$.
\end{proof}

\begin{rem*}
The time-regularity of $u$ and $v$ just requires $F\in C_{t}^{0}C_{x}^{\beta-}$.
\end{rem*}

\section{Beginning of the proof of \Propref{iterative}: mollification and gluing estimates}
\label{sec:proofProp}

In this section, we begin the proof of \Propref{iterative}.  As described in the introduction, the proof is based on convex integration techniques, and
consists of several steps: an initial mollification procedure, followed by a delicate balance of gluing estimates (between good and bad intervals, as described
below) and perturbation estimates, which allows one to close the iteration in the proof of \Propref{iterative}.  We perform the mollification procedure and derive 
the relevant gluing estimates in this section.  The perturbation estimates are then established in \Secref{Convex-integration-and} below, while the proof of 
\Propref{iterative} is completed in \Secref{proofProp2}.

\vspace{0.2in}

We begin by recalling that there are several natural relationships inherent in the choice of parameters described in \Subsecref{formulation_iter}.  We record these
in Appendix \ref{app:parameters}, and use them freely in the sequel.

\subsection{Mollification}
\label{subsec:mollifier}

We now introduce the mollification scheme.  With $\psi_{\ell}$ a smooth radial mollifier in space at the length scale $\ell$ defined in (\ref{eq:length_scale}), set
\begin{align*}
u_{\ell}{\,\coloneqq\,}\psi_{\ell}*u_{q},\quad v_{\ell}{\,\coloneqq\,}\psi_{\ell}*v_{q}.
\end{align*}

By (\ref{eq:grad_uq_sup}) and (\ref{eq:grad_vq_sup}) we then have 
\begin{align}
\|v_{\ell}\|_{\jmath+\kappa} & \lesssim_{\kappa}\lambda_{q-1}^{\jmath}\delta_{q-1}^{\frac{1}{2}}\ell^{-\kappa}\label{eq:grad_v_l}\\
\|u_{\ell}\|_{\jmath+\kappa} & \lesssim_{\kappa}\lambda_{q}^{\jmath}\delta_{q}^{\frac{1}{2}}\ell^{-\kappa}\label{eq:grad_u_l}
\end{align}
for any $1\leq\jmath\leq12,0\leq\kappa$.  Moreover, setting
\begin{align}
F_{\ell} & {\,\coloneqq\,}\psi_{\ell}*F_{q}-\psi_{\ell}*\left(u_{q}\otimes u_{q}\right)+u_{\ell}\otimes u_{\ell}\label{eq:fl_definition}\\
R_{\ell} & {\,\coloneqq\,}\psi_{\ell}*R_{q}+\psi_{\ell}*\left(u_{q}\otimes u_{q}\right)-u_{\ell}\otimes u_{\ell}-\psi_{\ell}*\left(v_{q}\otimes v_{q}\right)+v_{\ell}\otimes v_{\ell},
\end{align}
we observe that $(u_{\ell},v_{\ell},F_{\ell},R_{\ell})$ solves (\ref{eq:forced_Euler_dual_system})
for suitable choices of the pressures $\pi_{\ell},p_{\ell}$. 

We recall the usual commutator estimates; see, e.g., \cite{conti2012h}.  For any $f,g\in C^{\infty}\left(\mathbb{T}^{d}\right)$ and $l>0,r\geq0$, one has
\begin{align}
\left\Vert f-f*\psi_{l}\right\Vert _{r} & \ls_{r}l^{2}\left\Vert f\right\Vert _{r+2}\nonumber 
\end{align}
and
\begin{align}
\left\Vert \left(f*\psi_{l}\right)\left(g*\psi_{l}\right)-\left(fg\right)*\psi_{l}\right\Vert _{r} & \lesssim_{r}l^{2-r}\left\Vert f\right\Vert _{1}\left\Vert g\right\Vert _{1}.\label{comm1}
\end{align}
Moreover, applying the product rule, one has
\begin{align*}
\left\Vert \left(f*\psi_{l}\right)\left(g*\psi_{l}\right)-\left(fg\right)*\psi_{l}\right\Vert _{\jmath+\kappa} & \lesssim_{\jmath,\kappa}l^{2-\kappa}\sum_{i=0}^{\jmath}\left\Vert f\right\Vert _{1+i}\left\Vert g\right\Vert _{1+\jmath-i}\\
 & \ls_{\jmath}l^{2-\kappa}\left(\left\Vert f\right\Vert _{1}\left\Vert g\right\Vert _{1+\jmath}+\left\Vert f\right\Vert _{1+\jmath}\left\Vert g\right\Vert _{1}\right)
\end{align*}
for any $\jmath,\kappa\in\mathbb{N}_{0}$.

\subsection{Preliminary estimates}

Before proceeding, we establish several preliminary estimates for these mollified quantities, which will be used frequently in the sequel.  For $0\leq \jmath\leq 10$, by (\ref{eq:grad_uq_sup}) and (\ref{eq:double_skip_iteration})
we have 
\begin{equation}
\|u_{\ell}-u_{q}\|_{\jmath}\ls\ell^{2}\left\Vert u_{q}\right\Vert _{\jmath+2}\ls\ell^{2}\lambda_{q}^{\jmath+2}\delta_{q}^{\frac{1}{2}}\ls\lambda_{q}^{\jmath}\frac{\lambda_{q}}{\lambda_{q+1}}\delta_{q}^{\frac{1}{2}}\ll\epsilon_{q+1}\lambda_{q+1}^{\jmath}\delta_{q+2}^{\frac{1}{2}}\label{eq:ul-uq}
\end{equation}

Next, for $\kappa\geq0$ and $1\leq \jmath\leq 10$, we have the estimates
\begin{align}
\|F_{\ell}\|_{\jmath+\kappa} & \leq C(\kappa)\ell^{-\kappa}\left\Vert F_{q}\right\Vert _{\jmath}+C(\kappa)\ell^{2-\kappa}\left\Vert u_{q}\right\Vert _{1+\jmath}\left\Vert u_{q}\right\Vert _{1}\nonumber \\
 & \leq C(\kappa)\ell^{-\kappa}M\epsilon_{q}\lambda_{q}^{\jmath-3\alpha}\delta_{q+1}+C(\kappa)\ell^{2-\kappa}M^{2}\lambda_{q}^{2+\jmath}\delta_{q}\nonumber \\
 & \leq B_{\kappa,1}\ell^{-\kappa}\epsilon_{q}\lambda_{q}^{\jmath-3\alpha}\delta_{q+1}\label{eq:Fl_estimate}
\end{align}
for a sufficiently large choice of $B_{\kappa,1}$, because of (\ref{eq:grad_uq_sup}),
(\ref{eq:Fq_sup}), and (\ref{eq:ell_smaller}).  In particular, the
constant $B_{\kappa,1}$ in (\ref{eq:Fl_estimate}) does not depend
on any $A_{\kappa}+B_{\kappa}$, and will help determine $B_{\kappa}$
in (\ref{eq:uq_vq_goodbounds})-(\ref{eq:Fq_goodbounds}) later.

On the other hand, in order to avoid loss of derivatives, for $\kappa\geq0$ and $1\leq \jmath\leq 12$, we have 
\begin{align}
\|F_{\ell}\|_{\jmath+\kappa} & \lesssim_{\kappa}\ell^{-\kappa}\left\Vert F_{q}\right\Vert _{\jmath}+\ell^{2-\left(\jmath-1\right)-\kappa}\left\Vert u_{q}\right\Vert _{2}\left\Vert u_{q}\right\Vert _{1}\nonumber \\
 & \ls\ell^{-\kappa}\epsilon_{q}\lambda_{q}^{\jmath-3\alpha}\delta_{q+1}+\ell^{-\jmath-\kappa}\ell^{3}\lambda_{q}^{3}\delta_{q}\nonumber \\
 & \ls\epsilon_{q+1}\lambda_{q+1}^{\jmath+\kappa-4\alpha}\delta_{q+2}+\ell^{-\jmath-\kappa}\left(\frac{\lambda_{q}}{\lambda_{q+1}}\right)^{\frac{9}{4}}\delta_{q}\nonumber \\
 & \ls\epsilon_{q+1}\lambda_{q+1}^{\jmath+\kappa-4\alpha}\delta_{q+2}\label{eq:Fl_est_no_loss}
\end{align}
where we used (\ref{eq:stress_size_ind1}) to pass to the third line,
and (\ref{eq:double_skip_iteration}) to pass to the last line.

For $0\leq \jmath\leq 10$, by (\ref{eq:Fq_sup}), (\ref{eq:grad_uq_sup}),
(\ref{eq:ell_smaller}) we have
\begin{align}
\left\Vert \left(F_{\ell}-F_{q}\right)(t)\right\Vert _{\jmath} & \ls\ell_{q}^{2}\left\Vert F_{q}(t)\right\Vert _{\jmath+2}+\ell_{q}^{2}\left\Vert u_{q}(t)\right\Vert _{1+\jmath}\left\Vert u_{q}(t)\right\Vert _{1}\nonumber \\
 & \ls\ell_{q}^{2}\epsilon_{q}\lambda_{q}^{\jmath+2-3\alpha}\delta_{q+1}+\ell_{q}^{2}\lambda_{q}^{2+\jmath}\delta_{q}\nonumber \\
 & \ls\epsilon_{q}\lambda_{q}^{\jmath-3\alpha}\delta_{q+1}\label{eq:Fl-fq_est-1}
\end{align}

For $0\leq \jmath\leq 10$, $\kappa\geq0$ and $t\in\mathcal{G}_{q}+B(0,\epsilon_{q-1}\tau_{q-1})$,
by (\ref{eq:Fq_goodbounds}), (\ref{eq:uq_vq_goodbounds}), (\ref{eq:double_skip_iteration}),
(\ref{eq:stress_size_ind1}) we have
\begin{align}
\left\Vert \left(F_{\ell}-F_{q}\right)(t)\right\Vert _{\jmath+\kappa} & \ls_{\kappa}\ell_{q}^{2}\left\Vert F_{q}(t)\right\Vert _{\jmath+\kappa+2}+\ell_{q}^{2-\kappa}\left\Vert u_{q}(t)\right\Vert _{1+\jmath}\left\Vert u_{q}(t)\right\Vert _{1}\nonumber \\
 & \ls_{\kappa}\ell_{q}^{2}\ell_{q-1}^{-\kappa}\lambda_{q-1}^{2+\jmath-3\alpha}\delta_{q}+\ell_{q}^{2-\kappa}\lambda_{q-1}^{2+\jmath}\delta_{q-1}\nonumber \\
 & \ls\ell_{q-1}^{-\kappa}\lambda_{q}^{\jmath-3\alpha}\epsilon_{q}\frac{\lambda_{q}}{\lambda_{q+1}}\delta_{q+1}+\ell_{q}^{-\kappa}\lambda_{q}^{\jmath}\frac{\lambda_{q}}{\lambda_{q+1}}\delta_{q+1}\nonumber \\
 & \ls\epsilon_{q}\ell_{q}^{-\kappa}\lambda_{q}^{\jmath-3\alpha}\delta_{q+1}\label{eq:Fl-fq_est-2}
\end{align}

For $0\leq \jmath\leq 12$ (no loss of derivatives) and $t\in\mathcal{G}_{q}+B(0,\epsilon_{q-1}\tau_{q-1})$:
\begin{align}
\left\Vert \left(F_{\ell}-F_{q}\right)(t)\right\Vert _{\jmath} & \ls\ell_{q}^{2}\left\Vert F_{q}(t)\right\Vert _{\jmath+2}+\ell_{q}^{2-\jmath}\left\Vert u_{q}(t)\right\Vert _{1}^{2}\nonumber \\
 & \ls\ell_{q}^{2}\ell_{q-1}^{-\jmath}\lambda_{q-1}^{2-3\alpha}\delta_{q}+\ell_{q}^{2-\jmath}\lambda_{q-1}^{2}\delta_{q-1}\nonumber \\
 & \ls\ell_{q-1}^{-\jmath}\frac{1}{\lambda_{q}\lambda_{q+1}}\lambda_{q}^{2-3\alpha}\delta_{q+1}+\lambda_{q+1}^{\jmath}\frac{1}{\lambda_{q}\lambda_{q+1}}\lambda_{q}^{2}\delta_{q+1}\nonumber \\
 & \ls\epsilon_{q+1}\lambda_{q+1}^{\jmath-4\alpha}\delta_{q+2}\label{eq:Fl-fq_est}
\end{align}
where we used (\ref{eq:uq_vq_goodbounds}), (\ref{eq:Fq_goodbounds}),
(\ref{eq:ell_smaller}) and (\ref{eq:stress_size_ind3}).

Next, by (\ref{eq:grad_uq_sup}), (\ref{eq:uvF_sup}) we have 
\begin{align}
\left\Vert F_{\ell}\right\Vert _{0} & \leq\left\Vert F_{q}\right\Vert _{0}+C(d)\ell^{2}\left\Vert u_{q}(t)\right\Vert _{1}^{2}\leq1-\delta_{q}^{\frac{1}{2}}+C(d)\ell_{q}^{2}\lambda_{q}^{2}\delta_{q}\nonumber \\
& \leq1-\delta_{q}^{\frac{1}{2}}+C(d)\epsilon_{q}\delta_{q+1}\\
& \ll 1-\frac{3}{2}\delta_{q+1}^{\frac{1}{2}}\label{eq:fl_0}
\end{align}
because of (\ref{eq:ell_smaller}).

For $\kappa\geq0$ and $0\leq \jmath\leq 10$, we have 
\begin{align}
\|R_{\ell}\|_{\jmath+\kappa} & \lesssim_{\kappa}\ell^{-\kappa}\left\Vert R_{q}\right\Vert _{\jmath}+\ell^{2-\kappa}\left(\left\Vert u_{q}\right\Vert _{1+\jmath}\left\Vert u_{q}\right\Vert _{1}+\left\Vert v_{q}\right\Vert _{1+\jmath}\left\Vert v_{q}\right\Vert _{1}\right)\nonumber \\
 & \lesssim\ell^{-\kappa}\epsilon_{q}\lambda_{q}^{\jmath-3\alpha}\delta_{q+1}+\ell^{2-k}\lambda_{q}^{2+\jmath}\delta_{q}+\ell^{2-k}\lambda_{q-1}^{2+\jmath}\delta_{q-1}\nonumber \\
 & \lesssim\ell^{-\kappa}\epsilon_{q}\lambda_{q}^{\jmath-3\alpha}\delta_{q+1}\label{eq:Rl_estimate}
\end{align}
because of (\ref{eq:grad_uq_sup}), (\ref{eq:grad_vq_sup}), (\ref{eq:Fq_sup}),
(\ref{eq:ell_smaller}) and (\ref{eq:double_skip_iteration}).

%\subsection{Gluing Estimates}
%\protect\label{sec:Proof-of-iteration}Iteration scheme}
%\subsection{\protect\label{subsec:The-gluing-step}The gluing step}

\subsection{Temporal cutoffs}
\label{subsec:temporal_cutoff}

We now resume the main construction.  With $\tau_{q}$ defined in (\ref{eq:tau_q}), we let $t_{j}{\,\coloneqq\,}j\tau_{q}$, and let
$\mathcal{J}$ be the set of all indices $j$ such that 
\[
\left[t_{j}-2\epsilon_{q}\tau_{q},t_{j}+3\epsilon_{q}\tau_{q}\right]\subset \mathcal B_{q}.
\]
The set $\mathcal J$ contains the ``bad'' indices whose corresponding time intervals will be part of $\mathcal{B}_{q+1}$.  Clearly we have $\#(\mathcal{J})\sim\tau_{q}^{-1}\prod_{p=1}^{q-1}\epsilon_{p}$. 

We next define $$\mathcal{J}^{*}\,\coloneqq\,\{j\in\mathcal{J}:j+1\in\mathcal{J}\}.$$  This set consists of the indices where we will apply the local existence estimates for forced Euler equations.

Recall that we have the decompositions $$\mathcal B_q=\bigcup_iI_i^{b,q},\quad \mathcal G_q=\bigcup_iI_i^{g,q}$$ as unions of disjoint intervals.  We consider a partition of unity $\{\chi_{j}^{b}\}_{j}\cup\{\chi_{i}^{g}\}_{i}$ of $[0,T]$ such that 
\begin{itemize}
\item for any $j\in\mathcal{J}^{*}$, $\mathbf{1}_{[t_{j},t_{j+1}+\epsilon_{q}\tau_{q}]}\geq\chi_{j}^{b}\geq\mathbf{1}_{[t_{j}+\epsilon_{q}\tau_{q},t_{j+1}]}$, 
\item $\chi_{i}^{g}$ is supported in $I_{i}^{g,q}+B\left(0,\tau_{q}+6\epsilon_{q}\tau_{q}\right)$, and
\item we have the bounds 
\begin{equation}
\|\dd_{t}^{N}\chi_{i}^{g}\|_{0}+\|\dd_{t}^{N}\chi_{j}^{b}\|_{0}\lesssim_{N}(\epsilon_{q}\tau_{q})^{-N}\quad\text{for all }N\geq1.\label{eq:cutoff_time_deri}
\end{equation}
\end{itemize}

We remark that (\ref{eq:Rq_zerowhere}) and the fact that $\epsilon_{q}\tau_{q}\ll\tau_{q}\ll\epsilon_{q-1}\tau_{q-1}$ imply that $R_{q}=0$ on $\supp\chi_{i}^{g}$.  We refer the reader to Figure 3.1 in \cite{bulutEpochsRegularityWild2022} for an illustration of this time cutoff scheme.

We now set $\chi^{g}{\,\coloneqq\,}\sum_{i}\chi_{i}^{g}$ and $\chi^{b}{\,\coloneqq\,}\sum_{j\in\mathcal{J^{*}}}\chi_{j}^{b}$, and
define
\begin{align}
u_{q+1} & =\overline{u}_{q}=\chi^{g}u_{q}+\chi^{b}u_{\ell}\nonumber \\
F_{q+1} & =\overline{F}_{q}=\dd_{t}\chi^{g}\mathcal{R}(u_{q}-u_{\ell})-\chi^{g}(1-\chi^{g})(u_{q}-u_{\ell})\otimes(u_{q}-u_{\ell})\nonumber \\
 & \phantom{=\overline{F}_{q}=}+\chi^{g}F_{q}+\left(1-\chi^{g}\right)F_{\ell}\label{eq:Fbarq_formula}
\end{align}
where $\mathcal{R}$ is as defined in (\ref{eq:antidiv}).  We then have 
\[
\dd_{t}\overline{u}_{q}+\divop\overline{u}_{q}\otimes\overline{u}_{q}+\grad\overline{\pi}_{q}=\divop\overline{F}_{q}
\]
for a suitable pressure $\overline{\pi}_{q}$.  Moreover, for $1\leq \jmath\leq 12$ and $\kappa\geq0$, by (\ref{eq:grad_uq_sup}) we have 
\begin{equation}
\left\Vert u_{\ell}\right\Vert _{\jmath+\kappa}\leq C(\kappa)\ell_{q}^{-k}\left\Vert u_{q}\right\Vert _{\jmath}\leq C(\kappa)\ell_{q}^{-k}M\lambda_{q}^{\jmath}\delta_{q}^{\frac{1}{2}}\leq B_{\kappa,2}\ell_{q}^{-k}\lambda_{q}^{\jmath}\delta_{q}^{\frac{1}{2}}\label{eq:Bk_2nd_choice}
\end{equation}
for a sufficiently large choice of $B_{\kappa,2}$.  In particular,
$B_{\kappa,2}$ does not depend on any $A_{\kappa}+B_{\kappa}$ and
will help determine $B_{\kappa}$ in (\ref{eq:uq_vq_goodbounds})-(\ref{eq:Fq_goodbounds})
later.

We now state the relevant local existence estimates.
\begin{lem}
\label{lem:local_existence}Suppose we are given $\alpha\in\left(0,1\right)$, a
smooth divergence-free datum $v_{0}$, and a smooth force $f$.  Then for any $\tau\lesssim_{\alpha}\min\left(\left\Vert v_{0}\right\Vert _{C_{x}^{1+\alpha}}^{-1},\left\Vert f\right\Vert _{C_{t}^{0}C_{x}^{1+\alpha}}^{-1/2}\right)$,
there exists a unique smooth solution $v$ to (\ref{eq:Euler_start}) on
$[0,\tau]\times\mathbb{T}^{d}$ such that $v\left(0,\cdot\right)=v_{0}$
and 
\[
\left\Vert v\right\Vert _{N+\alpha}\lesssim_{N,\alpha}\left\Vert v_{0}\right\Vert _{N+\alpha}+\tau\left\Vert f\right\Vert _{C_{t}^{0}C_{x}^{N+\alpha}}\quad\text{for all }N\geq1.
\]
\end{lem}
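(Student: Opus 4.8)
The plan is to prove Lemma~\ref{lem:local_existence} by a transport--energy argument in H\"older spaces, with the entire difficulty concentrated in a base-level bootstrap at regularity $C^{1+\alpha}$.

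\textbf{Reduction and elimination of the pressure.} Since $v_0$ and $f$ are smooth, classical local well-posedness for the forced Euler system gives a unique smooth solution $v$ on a maximal interval $[0,T^*)$, with the continuation criterion that $v$ persists as long as $\|v(t)\|_{1+\alpha}$ stays bounded (equivalently, as long as the local existence time, controlled from below by $\min(\|v(t)\|_{1+\alpha}^{-1},\|f\|_{C_t^0C_x^{1+\alpha}}^{-1/2})$, does not collapse). So it suffices to prove the a priori bounds on any subinterval where $v$ exists; a standard continuity argument then promotes these to existence on all of $[0,\tau]$ together with the stated estimates. To run the estimates, eliminate the pressure: taking the divergence of the momentum equation and using $\divop v=0$ yields $\Delta p=\divop f-\partial_j v^i\,\partial_i v^j$, so $\nabla p=\nabla\Delta^{-1}\divop f-\nabla\Delta^{-1}(\partial_j v^i\,\partial_i v^j)$ (the scalar $\partial_j v^i\,\partial_i v^j$ has zero mean). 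As $\nabla\Delta^{-1}\divop$ is a Calder\'on--Zygmund operator and $\nabla\Delta^{-1}$ gains one derivative on H\"older spaces, the elementary product estimate gives $\|\nabla p\|_{M+\alpha}\lesssim_M\|v\|_{1+\alpha}\|v\|_{M+\alpha}+\|f\|_{M+\alpha}$ for every $M\ge1$. Thus $v$ is transported by itself, $\partial_t v+v\cdot\nabla v=g$, with $g:=f-\nabla p$ obeying the same bound.

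\textbf{The $C^\alpha$ transport estimate and the base bootstrap.} The only technical input is the standard transport estimate: if $\partial_t\theta+v\cdot\nabla\theta=h$ and $\Phi_t$ is the flow of $v$, then $\theta\circ\Phi_t$ has time derivative $h\circ\Phi_t$, so from $\|\nabla\Phi_t^{\pm1}\|_0\le e^{\int_0^t\|\nabla v\|_0}$ and the chain rule for the H\"older seminorm one gets $\|\theta(t)\|_{C^\alpha}\lesssim_\alpha e^{C\int_0^t\|\nabla v(s)\|_0\,ds}(\|\theta(0)\|_{C^\alpha}+\int_0^t\|h(s)\|_{C^\alpha}\,ds)$, along with $\|\theta(t)\|_0\le\|\theta(0)\|_0+\int_0^t\|h(s)\|_0\,ds$. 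Apply this with $\theta=v$ and with $\theta=\nabla v$ (which solves $\partial_t(\nabla v)+v\cdot\nabla(\nabla v)=\nabla g-(\nabla v)(\nabla v)$); using $\|v\|_{1+\alpha}\lesssim\|v\|_0+\|\nabla v\|_{C^\alpha}$ and the pressure bound, one obtains, for $Y(t):=\sup_{s\in[0,t]}\|v(s)\|_{1+\alpha}$, the integral inequality $Y(t)\lesssim e^{C\tau Y(t)}(\|v_0\|_{1+\alpha}+\tau Y(t)^2+\tau\|f\|_{C_t^0C_x^{1+\alpha}})$. Now bootstrap: the hypotheses on $\tau$ say precisely that $\tau\|v_0\|_{1+\alpha}$ and $\tau^2\|f\|_{1+\alpha}$ are each $\le c$ with $c$ as small as desired, so if $Y(t)\le 2AK$ on $[0,t]$ with $K:=\|v_0\|_{1+\alpha}+\tau\|f\|_{C_t^0C_x^{1+\alpha}}$, then $\tau Y(t)\lesssim A(\tau\|v_0\|_{1+\alpha}+\tau^2\|f\|_{1+\alpha})\lesssim Ac$ is small; hence $e^{C\tau Y}\lesssim1$ and $\tau Y^2=(\tau Y)Y$ is absorbed into the left side, giving $Y(t)\le AK$ for suitable $A$ and $c$. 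This closes the bootstrap: $\sup_{[0,\tau]}\|v\|_{1+\alpha}\lesssim_\alpha K$, and in particular $\int_0^\tau\|v(s)\|_{1+\alpha}\,ds\lesssim\tau\|v_0\|_{1+\alpha}+\tau^2\|f\|_{1+\alpha}\lesssim1$ --- the crucial quantity below --- as well as existence on $[0,\tau]$ by continuation.

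\textbf{Higher derivatives and uniqueness.} For $M\ge2$, differentiate: $\partial_t(\nabla^M v)+v\cdot\nabla(\nabla^M v)=\nabla^M g-[\nabla^M,v\cdot\nabla]v$, where the commutator is a finite sum of terms $(\nabla^j v)(\nabla^{M+1-j}v)$, $1\le j\le M$, each with $C^\alpha$-norm $\lesssim_M\|v\|_{1+\alpha}\|v\|_{M+\alpha}$ by interpolation of H\"older norms, while $\|\nabla^M g\|_{C^\alpha}\lesssim_M\|f\|_{M+\alpha}+\|v\|_{1+\alpha}\|v\|_{M+\alpha}$ from the pressure bound. The $C^\alpha$ transport estimate --- whose Gr\"onwall exponent is $\int_0^\tau\|\nabla v\|_0\le\int_0^\tau\|v\|_{1+\alpha}\lesssim1$, bounded \emph{regardless of the size of $\tau$} --- then gives $\|\nabla^M v(t)\|_{C^\alpha}\lesssim_M\|v_0\|_{M+\alpha}+\int_0^t\|v\|_{1+\alpha}\|v\|_{M+\alpha}\,ds+\tau\|f\|_{C_t^0C_x^{M+\alpha}}$; combining with $\|v\|_{M+\alpha}\lesssim_M\|v\|_0+\|\nabla^M v\|_{C^\alpha}$, the bound on $\|v\|_0$ from the previous step, and Gr\"onwall in $\|v(t)\|_{M+\alpha}$ (integrating factor $e^{C_M\int_0^t\|v\|_{1+\alpha}}\lesssim_M1$), this yields $\|v(t)\|_{M+\alpha}\lesssim_M\|v_0\|_{M+\alpha}+\tau\|f\|_{C_t^0C_x^{M+\alpha}}$, the desired estimate. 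Uniqueness follows from a standard $C^1$ Gr\"onwall estimate on the difference of two solutions, the pressure difference being Calder\'on--Zygmund-controlled by the velocity difference. I expect the main obstacle to be the $M=1$ bootstrap: securing the $C^\alpha$ transport estimate in the form whose Gr\"onwall exponent is $\int\|\nabla v\|_0$ (which is what makes $M\ge2$ immediate even when $\tau$ is large), and checking that the two scaling conditions $\tau\lesssim_\alpha\|v_0\|_{1+\alpha}^{-1}$ and $\tau\lesssim_\alpha\|f\|_{C_t^0C_x^{1+\alpha}}^{-1/2}$ are exactly what is needed to absorb both the quadratic self-interaction of $v$ and the forcing contribution.
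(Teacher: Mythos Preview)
Your proposal is correct and follows essentially the same route as the paper's Appendix~\ref{app:Local-existence-of}: both eliminate the pressure via Calder\'on--Zygmund bounds, derive a transport-based integral inequality $\|v(t)\|_{N+\alpha}\lesssim\|v_0\|_{N+\alpha}+\tau\|f\|_{N+\alpha}+\int_0^t\|v\|_{1+\alpha}\|v\|_{N+\alpha}$, close the quadratic $N=1$ case under the stated smallness of $\tau$, and then propagate $N\ge2$ by linear Gr\"onwall using $\int_0^\tau\|v\|_{1+\alpha}\lesssim1$. The only cosmetic difference is that the paper closes the base-level quadratic inequality via LaSalle's nonlinear Gr\"onwall lemma (giving $f\le(A^{-1}-Ct)^{-1}\le2A$ on $[0,(2AC)^{-1})$), whereas you run an equivalent continuity/bootstrap argument directly.
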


The proof of Lemma \ref{lem:local_existence} follows standard techniques; we include the details in Appendix \ref{app:Local-existence-of}.  Invoking \Lemref{local_existence}, for any $j\in\mathcal{J}^{*}$, let $v_{j}$ to be the solution of the forced Euler equations
\begin{align*}
\dd_{t}v_{j}+\divop v_{j}\otimes v_{j}+\grad p_{j} & =\divop F_{\ell}\\
\divop v_{j} & =0\\
v_{j}(t_{j}) & =v_{\ell}(t_{j})
\end{align*}
on $[t_{j},t_{j+2}]\times\T$.  Indeed, the definition of $v_j$ on this time scale is permissible because
\begin{align}
 & \tau_{q}\|v_{\ell}(t_{j})\|_{1+\alpha}+\tau_{q}\left\Vert \divop F_{\ell}\right\Vert _{C_{t}^{0}C_{x}^{1+\alpha}}^{1/2}\label{eq:tau_l}\\
 & \lesssim\tau_{q}\lambda_{q-1}^{1+\alpha}\delta_{q-1}^{\frac{1}{2}}+\tau_{q}\left(\lambda_{q}^{2-2\alpha}\delta_{q+1}\right)^{1/2}\nonumber \\
 & \lesssim\tau_{q}\lambda_{q}^{1+\alpha}\delta_{q+1}^{\frac{1}{2}}+\tau_{q}\lambda_{q}^{1-\alpha}\delta_{q+1}^{\frac{1}{2}}\nonumber \\
 & \ll1\nonumber 
\end{align}
where we have used (\ref{eq:tau_q}), (\ref{eq:Fl_estimate}), and (\ref{eq:double_skip_iteration}).\footnote{We remark that the local well-posedness is allowed to continue much longer than allowable in the proof of Onsager's conjecture.  This will be essential in Section~\ref{errorestimates}; see the discussion in Subsection~\ref{subsec:strategy}.}

For $1\leq\jmath\leq 8$ and $\kappa\geq0$ we then have 
\begin{align}
\|v_{j}\|_{\jmath+\kappa+\alpha} & \leq C(\kappa)\|v_{\ell}(t_{j})\|_{\jmath+\kappa+\alpha}+C(\kappa)\tau\left\Vert \divop F_{\ell}\right\Vert _{C_{t}^{0}C_{x}^{\jmath+\kappa+\alpha}}\nonumber \\
 & \leq C(\kappa)M\ell^{-\kappa}\lambda_{q-1}^{\jmath+\alpha}\delta_{q-1}^{1/2}+C(\kappa)\tau B_{\kappa,1}\tau\ell^{-\kappa}\lambda_{q}^{\jmath+1-2\alpha}\delta_{q+1}\nonumber \\
 & \leq B_{\kappa,3}\ell^{-\kappa}\lambda_{q}^{\jmath-\alpha}\delta_{q+1}^{\frac{1}{2}}\label{eq:vj_bound}
\end{align}
 for a sufficiently large choice of $B_{\kappa,3}$, because of (\ref{eq:tau_q}),
(\ref{eq:Fl_estimate}), (\ref{eq:grad_vq_sup}) and (\ref{eq:double_skip_iteration}).
In particular, $B_{\kappa,3}$ does not depend on any $A_{\kappa}+B_{\kappa}$.
Note that this implies 
\begin{equation}
\|v_{j}\|_{\jmath+\kappa+\alpha}+\|v_{\ell}\|_{\jmath+\kappa+\alpha}\lesssim_{\kappa}\ell^{-\kappa}\lambda_{q}^{\jmath+\alpha}\delta_{q+1}^{1/2}\label{eq:vj_and_vl_bound}.
\end{equation}

We now define 
\begin{align}
\overline{v}_{q} & {\,\coloneqq\,}\sum_{i}\chi_{i}^gv_q+\sum_{j\in\mathcal{J^{*}}}{}\chi_{j}^{b}v_{j}\label{eq:vbar_q},
\end{align}
and let $\mathcal{B}_{q+1}$ be the union of the intervals $\left[t_{j}-2\epsilon_{q}\tau_{q},t_{j}+3\epsilon_{q}\tau_{q}\right]$ lying in $\mathcal{B}_{q}$.

It immediately follows that (\ref{eq:Gq_Bq}) is satisfied.  Moreover, by choosing $$B_{\kappa}=\max\left\{ B_{\kappa,1},B_{\kappa,2},B_{\kappa,3}\right\},$$
(from (\ref{eq:Fl_estimate}), (\ref{eq:Bk_2nd_choice}), (\ref{eq:vj_bound})),
we have, for $1\leq\jmath\leq8$ and $\kappa\geq0$,
\begin{align}
\left\Vert \overline{u}_{q}\right\Vert _{\jmath+\kappa} & \leq\chi^{g}\left\Vert u_{q}\right\Vert _{\jmath+\kappa}+\chi^{b}\left\Vert u_{\ell}\right\Vert _{\jmath+\kappa}\nonumber \\
 & \leq\left(A_{\kappa}+B_{\kappa}\right)\ell_{q-1}^{-\kappa}\lambda_{q-1}^{\jmath}\delta_{q-1}^{\frac{1}{2}}\chi^{g}(t)+B_{\kappa,2}\ell_{q}^{-k}\lambda_{q}^{\jmath}\delta_{q}^{\frac{1}{2}}\chi^{b}(t)\nonumber \\
 & \leq\left(A_{\kappa}+B_{\kappa}\right)\ell_{q}^{-\kappa}\lambda_{q}^{\jmath}\delta_{q}^{\frac{1}{2}}\label{eq:glue2-u}
\end{align}
because of (\ref{eq:uq_vq_goodbounds}), (\ref{eq:Bk_2nd_choice}),
and (\ref{eq:double_skip_iteration}).  Similarly, for $1\leq\jmath\leq8$ and $\kappa\geq0$,
\begin{align}
\left\Vert \overline{v}_{q}\right\Vert _{\jmath+\kappa} & \leq\chi^{g}\left\Vert v_{q}\right\Vert _{\jmath+\kappa}+\chi^{b}\|v_{j}\|_{\jmath+\kappa+\alpha}\nonumber \\
 & \leq\left(A_{\kappa}+B_{\kappa}\right)\ell_{q-1}^{-\kappa}\lambda_{q-1}^{\jmath}\delta_{q-1}^{\frac{1}{2}}\chi^{g}(t)+B_{\kappa,3}\ell_{q}^{-k}\lambda_{q}^{\jmath-\alpha}\delta_{q+1}^{\frac{1}{2}}\chi^{b}(t)\nonumber \\
 & \leq\left(A_{\kappa}+B_{\kappa}\right)\ell_{q}^{-\kappa}\lambda_{q}^{\jmath-\alpha}\delta_{q+1}^{\frac{1}{2}}\label{eq:glue2}
\end{align}
because of (\ref{eq:uq_vq_goodbounds}), (\ref{eq:Bk_2nd_choice}), and (\ref{eq:vj_bound}).  It follows that (\ref{eq:Fl_estimate}), (\ref{eq:glue2-u}), and (\ref{eq:glue2}) imply (\ref{eq:uq_vq_goodbounds})-(\ref{eq:Fq_goodbounds}) with $q$ changed to $q+1$.

In \Subsecref{gluing-v} below we will construct a favorable smooth tensor field $\overline{R}_{q}$ such that 
\begin{equation}
\dd_{t}\overline{v}_{q}+\divop\left(\overline{v}_{q}\otimes\overline{v}_{q}\right)+\grad\overline{p}_{q}=\divop\left(\chi^{g}F_{q}+\chi^{b}F_{\ell}\right)+\divop\overline{R}_{q}\label{eq:Rq_bar_def}
\end{equation}
for some pressure $\overline{p}_{q}$. 

Note that we will use $\chi^{g}F_{q}+\chi^{b}F_{\ell}$ (instead of $\overline{F}_{q}$) for constructing the stress in the remainder of this section and in \Secref{Convex-integration-and}. 

\subsection{Gluing estimates for $u$}  We now establish our gluing estimates for $u$.

\begin{prop}
\label{prop:glue_est_for_u}For any $0\leq \jmath\leq 12$, and $\kappa\geq0$,
we have 
\begin{align}
\left\Vert \overline{u}_{q}-u_{\ell}\right\Vert _{\jmath+\kappa+\alpha} & \leq\mathbf{1}_{\supp\chi^{g}}\left\Vert u_{q}-u_{\ell}\right\Vert _{\jmath+\kappa+\alpha}\nonumber \\
&\lesssim_{\kappa}\epsilon_{q}\epsilon_{q+1}\lambda_{q+1}^{\jmath+\kappa-4\alpha}\delta_{q+2}^{\frac{1}{2}},\label{eq:glue1-u}
\end{align}
and
\begin{align}
\left\Vert \mathcal{B}\overline{u}_{q}-\mathcal{B}u_{\ell}\right\Vert _{\jmath+\kappa+\alpha} & \leq\mathbf{1}_{\supp\chi^{g}}\left\Vert \mathcal{B}u_{q}-\mathcal{B}u_{\ell}\right\Vert _{\jmath+\kappa+\alpha}\nonumber\\
& \lesssim_{\kappa}\left(\epsilon_{q}\epsilon_{q+1}\right)^{\frac{1}{2}}\lambda_{q+1}^{\jmath+\kappa-1-4\alpha}\delta_{q+2}^{\frac{1}{2}},\label{eq:glue1_bu}
\end{align}
where $\mathcal{B}$ is the Biot-Savart operator defined in (\ref{def-B}).  Moreover, for $0\leq \jmath\leq 12$, we have
\begin{align}
&\left\Vert \dd_{t}\chi^{g}\mathcal{R}(u_{q}-u_{\ell})-\chi^{g}(1-\chi^{g})(u_{q}-u_{\ell})\otimes(u_{q}-u_{\ell})\right\Vert _{\jmath+\alpha}\nonumber \\
&\hspace{0.2in}\leq\frac{M}{2} \epsilon_{q+1}\lambda_{q+1}^{\jmath-4\alpha}\delta_{q+2}\label{eq:commutator_term_in_Fq}.
\end{align}
\end{prop}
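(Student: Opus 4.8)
The plan is to start from the structural identity
\[
\overline u_q-u_\ell=\chi^g u_q+\chi^b u_\ell-u_\ell=\chi^g(u_q-u_\ell),
\]
which holds because $\{\chi^g_i\}_i\cup\{\chi^b_j\}_{j\in\mathcal{J}^{*}}$ is a partition of unity of $[0,T]$, so that $\chi^g+\chi^b\equiv1$; likewise $\mathcal B\overline u_q-\mathcal B u_\ell=\chi^g(\mathcal B u_q-\mathcal B u_\ell)$, since $\mathcal B$ acts only in the space variable and hence commutes with multiplication by the function $\chi^g=\chi^g(t)$. As $0\le\chi^g\le1$ is a function of time alone, every spatial derivative and every spatial Hölder seminorm passes through the factor $\chi^g$; this at once gives the first inequality in each of (\ref{eq:glue1-u}) and (\ref{eq:glue1_bu}), and reduces the whole proposition to bounding $\|(u_q-u_\ell)(t)\|_{\jmath+\kappa+\alpha}$, $\|\mathcal B(u_q-u_\ell)(t)\|_{\jmath+\kappa+\alpha}$, and the two summands of (\ref{eq:commutator_term_in_Fq}), for $t\in\supp\chi^g$ (which also contains $\supp\dd_t\chi^g$).

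The key point is that $\supp\chi^g$ lies inside the good region: each $\chi^g_i$ is supported in $I^{g,q}_i+B(0,\tau_q+6\epsilon_q\tau_q)$ and $\tau_q+6\epsilon_q\tau_q\ll\epsilon_{q-1}\tau_{q-1}$ (Appendix~\ref{app:parameters}), so $\supp\chi^g\subset\mathcal G_q+B(0,\epsilon_{q-1}\tau_{q-1})$, where the strong derivative bounds (\ref{eq:uq_vq_goodbounds}) for $u_q$ are available — and these are essential, as the global bounds (\ref{eq:grad_uq_sup}) alone cannot produce the stated powers of $\lambda_{q+1}$. Writing $u_q-u_\ell=(\mathrm{Id}-\psi_\ell*)u_q$ and using the commutator estimates of Subsection~\ref{subsec:mollifier}, one has $\|(u_q-u_\ell)(t)\|_{\jmath+\kappa+\alpha}\lesssim_\kappa\ell^2\|u_q(t)\|_{\jmath+\kappa+2+\alpha}$; combining with (\ref{eq:uq_vq_goodbounds}) — splitting $\jmath+\kappa+3$ spatial derivatives as $\nabla^{\jmath'+\kappa'}$ with $\jmath'\le8$, which is always possible since $\kappa'$ is unrestricted there, and interpolating away the fractional order — gives on $\supp\chi^g$
\[
\|(u_q-u_\ell)(t)\|_{\jmath+\kappa+\alpha}\lesssim_\kappa\ell^2\,\ell_{q-1}^{-(\jmath+\kappa)}\,\lambda_{q-1}^{2+\alpha}\,\delta_{q-1}^{1/2}.
\]
Since $\ell_{q-1}^{-1}\le\lambda_{q+1}$ (Appendix~\ref{app:parameters}), the desired bound reduces to the case $\jmath+\kappa=0$, i.e. to $\ell^2\lambda_{q-1}^{2+\alpha}\delta_{q-1}^{1/2}\lesssim\epsilon_q\epsilon_{q+1}\lambda_{q+1}^{-4\alpha}\delta_{q+2}^{1/2}$; this comparison is borderline as $b\to1^+$ and closes exactly by the constraint (\ref{eq:sigma_bound}) together with $\alpha\ll1$ and $a\gg1$, using the parameter relations of Appendix~\ref{app:parameters} (in particular (\ref{eq:double_skip_iteration})). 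This establishes (\ref{eq:glue1-u}). For (\ref{eq:glue1_bu}) I would run the same argument, noting that $\mathcal B=(-\Delta)^{-1}d\flat$ is a Fourier multiplier of order $-1$, so $\mathcal B(u_q-u_\ell)=(\mathrm{Id}-\psi_\ell*)\mathcal B u_q$ and $\|\mathcal B(u_q-u_\ell)(t)\|_{\jmath+\kappa+\alpha}\lesssim_\kappa\ell^2\|u_q(t)\|_{\jmath+\kappa+1+\alpha}\lesssim_\kappa\ell^2\ell_{q-1}^{-(\jmath+\kappa)}\lambda_{q-1}^{1+\alpha}\delta_{q-1}^{1/2}$ on $\supp\chi^g$; reducing again to $\jmath+\kappa=0$, the resulting comparison likewise follows from (\ref{eq:sigma_bound}) and Appendix~\ref{app:parameters} (the factor $(\epsilon_q\epsilon_{q+1})^{1/2}$ rather than $\epsilon_q\epsilon_{q+1}$ leaves extra room).

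For (\ref{eq:commutator_term_in_Fq}) I would estimate the two summands separately. The quadratic term is handled crudely: $|\chi^g(1-\chi^g)|\le1$, and by the product rule together with (\ref{eq:glue1-u}) (case $\kappa=0$),
\[
\|(u_q-u_\ell)\otimes(u_q-u_\ell)\|_{\jmath+\alpha}\lesssim\|u_q-u_\ell\|_0\|u_q-u_\ell\|_{\jmath+\alpha}\lesssim(\epsilon_q\epsilon_{q+1})^2\lambda_{q+1}^{\jmath-8\alpha}\delta_{q+2},
\]
which is $\le\frac{M}{4}\epsilon_{q+1}\lambda_{q+1}^{\jmath-4\alpha}\delta_{q+2}$ since $\epsilon_q^2\epsilon_{q+1}\lambda_{q+1}^{-4\alpha}\le1$ and $M$ is a large constant depending only on $d$. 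For the first summand, $\|\dd_t\chi^g\|_0\lesssim(\epsilon_q\tau_q)^{-1}$ by (\ref{eq:cutoff_time_deri}); writing $\mathcal R(u_q-u_\ell)=(\mathrm{Id}-\psi_\ell*)\mathcal R u_q$ (again $\mathcal R$ is an order $-1$ Fourier multiplier, cf. (\ref{eq:antidiv2})) and using the commutator estimates with (\ref{eq:uq_vq_goodbounds}) on $\supp\dd_t\chi^g$ gives $\|\mathcal R(u_q-u_\ell)(t)\|_{\jmath+\alpha}\lesssim_\jmath\ell^2\ell_{q-1}^{-\jmath}\lambda_{q-1}^{1+\alpha}\delta_{q-1}^{1/2}$, so the first summand is $\lesssim_\jmath(\epsilon_q\tau_q)^{-1}\ell^2\ell_{q-1}^{-\jmath}\lambda_{q-1}^{1+\alpha}\delta_{q-1}^{1/2}$; reducing to $\jmath=0$ as before, the bound $(\epsilon_q\tau_q)^{-1}\ell^2\lambda_{q-1}^{1+\alpha}\delta_{q-1}^{1/2}\le\frac{M}{4}\epsilon_{q+1}\lambda_{q+1}^{-4\alpha}\delta_{q+2}$ again follows from Appendix~\ref{app:parameters} (with $\tau_q\sim\delta_{q+1}^{-1/2}\lambda_q^{-1-3\alpha}$, $\epsilon_q=\lambda_q^{-\sigma}$, and (\ref{eq:sigma_bound})) once $a$ is large. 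Summing the two contributions yields (\ref{eq:commutator_term_in_Fq}).

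The hard part is not any single step but the accounting around these reductions: first, confirming that $\supp\chi^g$ (hence $\supp\dd_t\chi^g$) sits inside $\mathcal G_q+B(0,\epsilon_{q-1}\tau_{q-1})$, which is what makes the improved bounds (\ref{eq:uq_vq_goodbounds}) applicable — without the mollified frequency localization of $u_q$ there, the target powers of $\lambda_{q+1}$ are simply out of reach — and second, checking the handful of borderline comparisons among $\lambda_q,\delta_q,\ell_q,\epsilon_q,\tau_q$, where the constraint (\ref{eq:sigma_bound}) on $\sigma$ provides exactly the slack that lets the estimates close as $b\to1^+$. The remaining ingredients (the commutator estimates, the order $-1$ mapping properties of $\mathcal B$ and $\mathcal R$, and the product rule) are standard.
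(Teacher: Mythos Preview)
Your argument is correct and follows essentially the same route as the paper: reduce to bounding $u_q-u_\ell$ (and its $\mathcal B$- or $\mathcal R$-antiderivative) on the good region $\mathcal G_q+B(0,\epsilon_{q-1}\tau_{q-1})$, apply the $\ell^2$ mollification estimate together with the improved good-time bounds (\ref{eq:uq_vq_goodbounds}), and close via the parameter relations in Appendix~\ref{app:parameters} (specifically (\ref{eq:double_skip_iteration}) and (\ref{eq:stress_size_ind1})--(\ref{eq:stress_size_ind3})). The only cosmetic difference is that for the $\partial_t\chi^g\,\mathcal R(u_q-u_\ell)$ term the paper routes through $\mathcal B$ via the Calder\'on--Zygmund operator $\mathcal R\delta$, whereas you use directly that $\mathcal R$ is an order $-1$ Fourier multiplier; both yield the same gain of one derivative and the same final bound.
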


\begin{proof}[Proof of \Propref{glue_est_for_u}]
For $0\leq \jmath\leq 12$, $\kappa\geq0$ and $t\in\mathcal{G}_{q}+B\left(0,\epsilon_{q-1}\tau_{q-1}\right)$, observe that
\begin{align}
&\|u_{q}-u_{\ell}\|_{\jmath+\kappa+\alpha}+\|v_{q}-v_{\ell}\|_{\jmath+\kappa+\alpha}\nonumber \\
&\hspace{0.2in} \lesssim_{\kappa}\ell_{q}^{2}\left(\|u_{q}\|_{\jmath+\kappa+2+\alpha}+\|v_{q}\|_{\jmath+\kappa+2+\alpha}\right)\nonumber \\
&\hspace{0.2in} \lesssim_{\kappa}\ell_{q-1}^{-\kappa-\jmath}\frac{1}{\lambda_{q}\lambda_{q+1}}\lambda_{q-1}^{2+\alpha}\delta_{q-1}^{\frac{1}{2}}\nonumber \\
&\hspace{0.2in} \lesssim\ell_{q-1}^{-\kappa-\jmath}\frac{\lambda_{q}^{1+\alpha}}{\lambda_{q+1}}\epsilon_{q}\delta_{q+1}^{\frac{1}{2}}\nonumber \\
&\hspace{0.2in} \ll\min\left\{ \epsilon_{q}\lambda_{q}^{\jmath+\kappa-5\alpha}\delta_{q+1}^{\frac{1}{2}},\epsilon_{q}^{2}\epsilon_{q+1}\lambda_{q+1}^{\jmath+\kappa-4\alpha}\delta_{q+2}^{\frac{1}{2}}\right\} \label{eq:uq-ul_goodtimes}
\end{align}
where we have used (\ref{eq:uq_vq_goodbounds}) and (\ref{eq:ell_smaller})
to pass to the third line, (\ref{eq:double_skip_iteration}) to pass
to the fourth line, and (\ref{eq:stress_size_ind2}) in the final
inequality.  This proves (\ref{eq:glue1-u}).  Similarly, by (\ref{eq:uq_vq_goodbounds}),
(\ref{eq:ell_smaller}), (\ref{eq:double_skip_iteration}), and (\ref{eq:stress_size_ind1})
we have
\begin{align}
&\|\mathcal{B}u_{q}-\mathcal{B}u_{\ell}\|_{\jmath+\kappa+\alpha}+\|\mathcal{B}v_{q}-\mathcal{B}v_{\ell}\|_{\jmath+\kappa+\alpha}\nonumber \\
&\hspace{0.2in} \lesssim_{\kappa}\ell_{q}^{2}\left(\|\mathcal{B}u_{q}\|_{\jmath+\kappa+2+\alpha}+\|\mathcal{B}v_{q}\|_{\jmath+\kappa+2+\alpha}\right)\nonumber \\
&\hspace{0.2in} \lesssim_{\kappa}\ell_{q}^{2}\left(\|u_{q}\|_{\jmath+\kappa+1+\alpha}+\|v_{q}\|_{\jmath+\kappa+1+\alpha}\right)\nonumber \\
&\hspace{0.2in} \lesssim_{\kappa}\ell_{q-1}^{-\kappa-\jmath}\lambda_{q}^{-\frac{1}{2}}\lambda_{q+1}^{-\frac{3}{2}}\lambda_{q-1}^{1+\alpha}\delta_{q-1}^{\frac{1}{2}}\nonumber \\
&\hspace{0.2in} \lesssim\ell_{q-1}^{-\kappa-\jmath}\lambda_{q+1}^{-\frac{3}{2}}\epsilon_{q}\lambda_{q}^{\frac{1}{2}+\alpha}\delta_{q+1}^{\frac{1}{2}}\nonumber \\
&\hspace{0.2in} \ll\epsilon_{q}^{\frac{3}{2}}\epsilon_{q+1}^{\frac{1}{2}}\lambda_{q+1}^{\jmath+\kappa-1-4\alpha}\delta_{q+2}^{\frac{1}{2}}\label{eq:buq-bul_goodtimes}
\end{align}
for $0\leq \jmath\leq 12$, $\kappa\geq0$ and $t\in\mathcal{G}_{q}+B\left(0,\epsilon_{q-1}\tau_{q-1}\right)$.  
We have thus proven (\ref{eq:glue1-u}) and (\ref{eq:glue1_bu}).

For $0\leq \jmath\leq 12$ and $\kappa\geq0$, because of (\ref{eq:uq-ul_goodtimes}),
(\ref{eq:buq-bul_goodtimes}), and (\ref{eq:stress_size_ind3}) we have
\begin{align*}
&\left\Vert \dd_{t}\chi^{g}\mathcal{R}(u_{q}-u_{\ell})-\chi^{g}(1-\chi^{g})(u_{q}-u_{\ell})\otimes(u_{q}-u_{\ell})\right\Vert _{\jmath+\alpha}\\
&\hspace{0.2in} \lesssim_{\kappa}\left(\epsilon_{q}\tau_{q}\right)^{-1}\left\Vert \mathcal{B}u_{q}-\mathcal{B}u_{\ell}\right\Vert _{\jmath+\alpha}+\left\Vert u_{q}-u_{\ell}\right\Vert _{\jmath+\alpha}\left\Vert u_{q}-u_{\ell}\right\Vert _{\alpha}\\
&\hspace{0.2in} \ls\left(\frac{\epsilon_{q+1}}{\epsilon_{q}}\right)^{\frac{1}{2}}\left(\lambda_{q}^{1+3\alpha}\delta_{q+1}^{\frac{1}{2}}\right)\lambda_{q+1}^{\jmath-1-4\alpha}\delta_{q+2}^{\frac{1}{2}}+\epsilon_{q}^{2}\epsilon_{q+1}^{2}\lambda_{q+1}^{\jmath-8\alpha}\delta_{q+2}\\
&\hspace{0.2in} \ls\epsilon_{q+1}\lambda_{q+1}^{\jmath-5\alpha}\delta_{q+2}
\end{align*}
Choosing $a$ sufficiently large, this completes the proof of (\ref{eq:commutator_term_in_Fq}).
\end{proof}

\begin{rem}
The estimate (\ref{eq:buq-bul_goodtimes}) is the point in the argument where the strictest bound on $\ell_{q}$ is required; in particular, it is this estimate which requires $\ell_q$ to be as small as defined in (\ref{eq:length_scale}). 
\end{rem}

We now record some straightforward consequences of \Propref{glue_est_for_u}.  Observe that for $1\leq \jmath\leq 12$, we have
\begin{equation}
\left\Vert F_{q+1}\right\Vert _{\jmath}=\left\Vert \overline{F}_{q}\right\Vert _{\jmath}\ls\epsilon_{q+1}\lambda_{q+1}^{\jmath-4\alpha}\delta_{q+2}\label{eq:F_q+1_bound_noloss}
\end{equation}
because of (\ref{eq:commutator_term_in_Fq}), (\ref{eq:Fl_est_no_loss}),
and (\ref{eq:Fq_sup}).  The force increment obeys, for $0\leq \jmath\leq 12$,
\begin{equation}
\left\Vert \overline{F}_{q}-F_{\ell}\right\Vert _{\jmath}\ls\chi^{g}\left\Vert F_{q}-F_{\ell}\right\Vert _{\jmath}+\epsilon_{q+1}\lambda_{q+1}^{\jmath-4\alpha}\delta_{q+2}\ls\epsilon_{q+1}\lambda_{q+1}^{\jmath-4\alpha}\delta_{q+2}\label{eq:Fbar-Fl_noloss}
\end{equation}
because of (\ref{eq:commutator_term_in_Fq}) and (\ref{eq:Fl-fq_est}).  Finally,
\begin{align}
\left\Vert F_{q+1}\right\Vert _{0}=\left\Vert \overline{F}_{q}\right\Vert _{0} & \leq C(d)\epsilon_{q+1}\lambda_{q+1}^{\jmath-4\alpha}\delta_{q+2}+\chi^{g}\left\Vert F_{q}\right\Vert _{0}+\left(1-\chi^{g}\right)\left\Vert F_{\ell}\right\Vert _{0}\nonumber \\
 & \ll\delta_{q+1}+1-\frac{3}{2}\delta_{q+1}^{\frac{1}{2}}\ll1-\delta_{q+1}^{\frac{1}{2}}\label{eq:Fq+1_0}
\end{align}
for large enough $a$, because of (\ref{eq:uvF_sup}), (\ref{eq:commutator_term_in_Fq}),
and (\ref{eq:fl_0}).

\subsection{Gluing estimates for $v$}
\label{subsec:gluing-v}

We next turn to the gluing estimates for $v$.  To simplify notation, we will use 
\begin{align*}
D_{t,\ell} & {\,\coloneqq\,}\dd_{t}+v_{\ell}\cdot\grad,\\
D_{t,q} & {\,\coloneqq\,}\dd_{t}+\overline{v}_{q}\cdot\grad.
\end{align*}
to denote the material derivatives along the respective coarse flows.

\begin{prop}[Gluing estimates for $v$]
\label{prop:glue_est} For any $0\leq \jmath\leq 7$, and $\kappa\geq0$, we
have 
\begin{align}
\left\Vert \overline{v}_{q}-v_{\ell}\right\Vert _{\jmath+\kappa+\alpha} & \lesssim_{\kappa}\epsilon_{q}\tau_{q}\delta_{q+1}\lambda_{q}^{1+\jmath-2\alpha}\ell^{-\kappa},\label{eq:glue1}\\
\|\overline{R}_{q}\|_{\jmath+\kappa+\alpha} & \lesssim_{\kappa}\ell^{-\kappa}\lambda_{q}^{\jmath-2\alpha}\delta_{q+1},\label{eq:glue3}
\end{align}
and, for $\jmath\leq 6$,
\begin{align}
\|D_{t,q}\overline{R}_{q}\|_{\jmath+\kappa+\alpha} & \lesssim_{\kappa}\left(\epsilon_{q}\tau_{q}\right)^{-1}\ell^{-\kappa}\lambda_{q}^{\jmath-2\alpha}\delta_{q+1}.\label{eq:glue4}
\end{align}
\end{prop}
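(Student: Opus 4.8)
The plan is to carry out the gluing argument of Isett \cite{isettProofOnsagerConjecture2018}, in the epochs-of-regularity form of \cite{derosaDimensionSingularSet2021,bulutEpochsRegularityWild2022}, now applied to the forced field $v$. Exactly as for $u$ in \Subsecref{temporal_cutoff}, starting from $\sum\chi=1$, $\sum\dd_{t}\chi=0$, the fact that each glued solution solves forced Euler (with force $\divop F_{q}$ where $R_{q}$ vanishes by (\ref{eq:Rq_zerowhere}), and with force $\divop F_{\ell}$ for the $v_{j}$), and $\divop v=0$ with all the glued solutions sharing the spatial mean of $v_{q}$, one obtains the explicit formula
\[
\overline{R}_{q}=\sum_{\text{overlaps}}\Bigl(\dd_{t}\chi_{a}\,\mathcal{R}(v_{a}-v_{b})-\chi_{a}\chi_{b}\,(v_{a}-v_{b})\otimes(v_{a}-v_{b})\Bigr)
\]
up to subtracting a trace into $\overline{p}_{q}$, where on each overlap region $\{\chi_{a}\chi_{b}>0\}$ of length $\sim\epsilon_{q}\tau_{q}$ the two active cutoffs are $\chi_{a},\chi_{b}$ and $v_{a},v_{b}$ are the corresponding glued solutions ($v_{j},v_{j+1}$ inside a bad interval, or $v_{q},v_{j}$ at a good--bad interface). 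Thus $\overline{R}_{q}$ is supported on the overlaps, and all three estimates reduce to bounds on the differences $v_{a}-v_{b}$ and their Biot--Savart potentials $\mathcal{B}(v_{a}-v_{b})$.

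The key new input, absent in the $u$-case (where $\overline{u}_{q}-u_{\ell}$ is a pure mollification error), is a stability estimate for the glued solutions $v_{j}$. Set $w:=v_{j}-v_{\ell}$ on $[t_{j},t_{j+2}]$; since $(u_{\ell},v_{\ell},F_{\ell},R_{\ell})$ solves (\ref{eq:forced_Euler_dual_system}) while $v_{j}$ solves forced Euler with force $\divop F_{\ell}$ and $w(t_{j})=0$, the field $w$ solves
\[
D_{t,\ell}w=-\,w\cdot\grad v_{\ell}-\mathbb{P}\divop(w\otimes w)-\mathbb{P}\divop R_{\ell}.
\]
First I would run the standard transport estimate with Gr\"onwall --- which closes since $|t-t_{j}|\,\|v_{\ell}\|_{1+\alpha}\lesssim\tau_{q}\|v_{\ell}\|_{1+\alpha}\ll1$ by (\ref{eq:tau_q}) and (\ref{eq:vj_and_vl_bound}) --- plus one bootstrap to absorb the quadratic term, obtaining $\|w\|_{\jmath+\kappa+\alpha}\lesssim_{\kappa}\tau_{q}\|R_{\ell}\|_{\jmath+\kappa+1+\alpha}$ for $\jmath\le7$; feeding in (\ref{eq:Rl_estimate}) gives the bad-interval half of (\ref{eq:glue1}), and since $\overline{v}_{q}-v_{\ell}=\sum_{i}\chi_{i}^{g}(v_{q}-v_{\ell})+\sum_{j}\chi_{j}^{b}(v_{j}-v_{\ell})$, combining with the good-interval mollification bound (\ref{eq:uq-ul_goodtimes}) proves (\ref{eq:glue1}). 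Commuting $\mathcal{B}$ past $D_{t,\ell}$ with the usual (zero-order) commutator estimate and repeating the argument yields the one-derivative-better potential bound $\|\mathcal{B}w\|_{\jmath+\kappa+\alpha}\lesssim_{\kappa}\tau_{q}\|R_{\ell}\|_{\jmath+\kappa+\alpha}$, which together with (\ref{eq:buq-bul_goodtimes}) controls $\mathcal{B}(v_{a}-v_{b})$ on every overlap; the extra factor $\epsilon_{q}$ in (\ref{eq:glue1}) and (\ref{eq:glue3}) is simply inherited from $R_{\ell}$ through (\ref{eq:Rl_estimate}).

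Plugging these into the formula for $\overline{R}_{q}$ --- bounding $\dd_{t}\chi_{a}$ by (\ref{eq:cutoff_time_deri}) and estimating $\mathcal{R}$ against $\mathcal{B}$ up to a zero-order operator --- the anti-divergence term contributes $\lesssim(\epsilon_{q}\tau_{q})^{-1}\|\mathcal{B}(v_{a}-v_{b})\|$ and the quadratic term $\lesssim\|v_{a}-v_{b}\|\,\|v_{a}-v_{b}\|_{\alpha}$, and the parameter inequalities of Appendix~\ref{app:parameters} then yield (\ref{eq:glue3}) with room to spare (one gets $\lambda_{q}^{-3\alpha}$ where $\lambda_{q}^{-2\alpha}$ is required).

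Finally, for (\ref{eq:glue4}) I would apply $D_{t,q}=\dd_{t}+\overline{v}_{q}\cdot\grad$ to each term of $\overline{R}_{q}$: when it hits a temporal cutoff it produces exactly the factor $(\epsilon_{q}\tau_{q})^{-1}$ present in the target, by (\ref{eq:cutoff_time_deri}), so the point is to show it produces nothing worse. Commuting $D_{t,q}$ through $\mathcal{R}$ (costing $\|\overline{v}_{q}\|_{1+\alpha}$, acceptable by (\ref{eq:glue2})) reduces matters to $D_{t,q}(v_{a}-v_{b})$; writing $D_{t,q}=D_{t,a}+(\overline{v}_{q}-v_{a})\cdot\grad$ with $D_{t,a}:=\dd_{t}+v_{a}\cdot\grad$ and using the Euler equation for $v_{a}$,
\[
\mathbb{P}D_{t,q}(v_{a}-v_{b})=\mathbb{P}\divop(F_{a}-F_{b})+\mathbb{P}\bigl[(\overline{v}_{q}-v_{a})\cdot\grad v_{a}-(\overline{v}_{q}-v_{b})\cdot\grad v_{b}\bigr],
\]
where $F_{a}-F_{b}$ vanishes on bad--bad overlaps and equals $F_{q}-F_{\ell}$ on good--bad overlaps, controlled by (\ref{eq:Fl-fq_est-2}); the remaining terms are handled by (\ref{eq:glue1}), (\ref{eq:grad_v_l}) and (\ref{eq:vj_and_vl_bound}), while the pressure discarded by $\mathbb{P}$ re-enters after $\mathcal{R}$ only through $\|p_{a}-p_{b}\|$, which is again controlled by $\|v_{a}-v_{b}\|$ and $\|F_{a}-F_{b}\|$ via Calder\'on--Zygmund. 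I expect this last estimate to be the main obstacle: one must commute $D_{t,q}$ past the anti-divergence and past the cutoffs while keeping the total loss to exactly $(\epsilon_{q}\tau_{q})^{-1}$, which is precisely where the optimized derivative bounds (\ref{eq:uq_vq_goodbounds})--(\ref{eq:Fq_goodbounds}) on the glued fields, together with the no-loss commutator estimates of \Subsecref{mollifier}, are used in an essential way. The restrictions $\jmath\le7$ in (\ref{eq:glue1})--(\ref{eq:glue3}) and $\jmath\le6$ in (\ref{eq:glue4}) reflect the derivative budget: twelve derivatives on $(u_{q},v_{q})$ in the inductive hypothesis, two absorbed in mollification, the rest consumed by the local-existence estimate (\ref{eq:vj_bound}) for $v_{j}$ (valid for $\jmath\le8$) and by the extra derivative spent in commuting with $D_{t,q}$.
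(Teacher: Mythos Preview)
Your proposal is correct and follows essentially the same route as the paper: the three-region decomposition, the transport/Gr\"onwall stability estimate for $v_{j}-v_{\ell}$ driven by $\divop R_{\ell}$, the one-derivative-better potential bound via $\mathcal{B}$, and then plugging into the explicit formula for $\overline{R}_{q}$ are exactly what the paper does (Propositions~\ref{prop:vglue}, \ref{prop:zglue}, \ref{prop:glued_stress_est} and their good--bad analogues).

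The only genuine difference is in the material derivative estimate \eqref{eq:glue4}. The paper first proves a transport estimate for $D_{t,\ell}(z_{j}-z_{\ell})$ (Proposition~\ref{prop:zglue}), and then, writing $\mathcal{R}(v_{j}-v_{j+1})=\mathcal{R}\delta(z_{j}-z_{j+1})$ with $\mathcal{R}\delta$ Calder\'on--Zygmund, commutes $D_{t,\ell}$ past $\mathcal{R}\delta$ via Lemma~\ref{lem:singular_comm} and feeds in the already-established $D_{t,\ell}z$ bound. You instead propose to compute $D_{t,q}(v_{a}-v_{b})$ directly from the Euler equations. This is a legitimate alternative, but two of your steps need sharpening: first, your transport equation for $w$ applies $\mathbb{P}$ only on the right, which is not an identity---the paper keeps the pressure gradient explicit and controls it separately by \eqref{eq:grad_pl-pj}; second, your commutator $[D_{t,q},\mathcal{R}]$ cannot be bounded by Lemma~\ref{lem:singular_comm} as stated, since that lemma is for order-zero operators. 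To make your step precise you must either pass to $\mathcal{R}\delta$ acting on $z$ (which then needs $D_{t,q}z$, essentially the paper's Proposition~\ref{prop:zglue}) or rewrite every term of $D_{t,q}(v_{a}-v_{b})$ as a divergence so that $\mathcal{R}\divop$ is the operator in play. Either fix brings you back to the paper's argument, so the variation is cosmetic rather than structural.
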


We remark that in subsequent convex integration steps, the solution will only be perturbed for $t\in\bigcup_{j\in\mathcal J}\left[t_{j}-\epsilon_{q}\tau_{q},t_{j}+2\epsilon_{q}\tau_{q}\right]$.  Thus,
\begin{align*}
v_{q+1} & =\overline{v}_{q}\\
\overline{R}_{q} & =0
\end{align*}
for all other times. 

The proof of \Propref{glue_est} is broken into three steps, treating (I) the region near the good sets (away from the gluing intervals), (II) the bad sets, and (III) the good-bad interface.

\subsection*{Region (I): near the good sets}

Consider the temporal region $\{\chi^{g}=1\}$.  In this region, we have
\begin{align*}
\overline{v}_{q} & =v_{q}, & \overline{F}_{q} & =F_{q}, & \overline{R}_{q} & {\,\coloneqq\,}0
\end{align*}
Then (\ref{eq:uq-ul_goodtimes}) implies (\ref{eq:glue1}) and therefore the conclusion of \Propref{glue_est} holds in this region.

\subsection*{Region (II): in the bad sets\protect\label{subsec:Bad-bad-interface}}

Consider the temporal region $\left[t_{j},t_{j}+2\tau_{q}\right]$
where $j\in\mathcal{J}^{*}$ such that $j+1\in\mathcal{J}^{*}$.  Note
that $\supp(\chi_{j}^{b}\chi_{j+1}^{b})$ lies in $\left[t_{j+1},t_{j+1}+\epsilon_{q}\tau_{q}\right]$.  Furthermore, we have
\[
\dd_{t}\overline{v}_{q}+\divop\overline{v}_{q}\otimes\overline{v}_{q}+\grad\overline{p}_{q}=\divop F_{\ell}+\divop\overline{R}_{q},
\]
where 
\begin{equation}
\overline{R}_{q}=\dd_{t}\chi_{j}^{b}\mathcal{R}(v_{j}-v_{j+1})-\chi_{j}^{b}(1-\chi_{j}^{b})(v_{j}-v_{j+1})\otimes(v_{j}-v_{j+1})\label{eq:glued_stress}
\end{equation}
and $\mathcal{R}$ is the standard inverse divergence; see (\ref{eq:antidiv})--(\ref{eq:antidiv2}).  To estimate $v_j-v_{j+1}$, thanks to the identity $v_{j}-v_{j+1}=(v_{j}-v_{\ell})-(v_{j+1}-v_{\ell})$, it suffices to prove bounds on  $v_{j}-v_{\ell}$.

Let us recall the transport estimate as in \cite[Proposition 3.3]{buckmasterOnsagerConjectureAdmissible2017}, which asserts that for 
$\alpha\in\left(0,1\right)$, if $v$ is a smooth vector field,
and $t\left\Vert v\right\Vert _{1}\leq1$, then  one has
\begin{align}
\left\Vert f(t)\right\Vert _{\alpha}\leq e^{\alpha}\left(\left\Vert f(0)\right\Vert _{\alpha}+\int_{0}^{t}\mathrm{d}s\;\left\Vert \left(\partial_{t}+\nabla_{v}\right)f(s)\right\Vert _{\alpha}\right).\label{eq-transport}
\end{align}

\begin{prop}
\label{prop:vglue}For $0\leq \jmath\leq 7$, $\kappa\geq0$, and $t\in\left[t_{j},t_{j}+2\tau_{q}\right]$,
\begin{align}
\|v_{j}-v_{\ell}\|_{\jmath+\kappa+\alpha} & \lesssim_{\kappa}\tau_{q}\epsilon_{q}\ell^{-\kappa}\lambda_{q}^{\jmath+1-2\alpha}\delta_{q+1}\label{eq:vj-vl_est}\\
\|D_{t,\ell}\left(v_{j}-v_{\ell}\right)\|_{\jmath+\kappa+\alpha} & \lesssim_{\kappa}\epsilon_{q}\ell^{-\kappa}\lambda_{q}^{\jmath+1-2\alpha}\delta_{q+1}.\label{eq:vj-vl_transport_est}
\end{align}
\end{prop}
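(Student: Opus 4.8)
The plan is to estimate $v_j - v_\ell$ by writing down the equation it satisfies and applying the transport estimate \eqref{eq-transport} along the coarse flow generated by $v_\ell$. Since both $v_j$ and $v_\ell$ are (approximate) solutions of the forced Euler system with the same force $\divop F_\ell$, their difference $w \coloneqq v_j - v_\ell$ satisfies an equation of the form
\begin{align*}
D_{t,\ell} w + w \cdot \grad v_\ell + \divop(w\otimes w) + \grad(p_j - p_\ell) = \mathcal{E},
\end{align*}
where $\mathcal{E}$ is an error term coming from the fact that $v_\ell$ is only an approximate solution: $\dd_t v_\ell + \divop(v_\ell\otimes v_\ell) + \grad p_\ell = \divop F_\ell + \divop R_\ell$ after mollification, so $\mathcal{E} = -\divop R_\ell$ up to pressure. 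Also, at time $t = t_j$, $w(t_j) = v_j(t_j) - v_\ell(t_j) = 0$ by the choice of initial data for $v_j$.

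First I would apply $\divop$-free projection or work with $\mathcal{B} w$ / apply $\mathcal R$ to convert $\divop R_\ell$ into a zeroth-order forcing, then bootstrap. The cleanest route: apply the transport estimate \eqref{eq-transport} to $w$ itself (valid since $\tau_q\|v_\ell\|_1 \ll 1$ by \eqref{eq:vj_and_vl_bound} and \eqref{eq:tau_q}), giving
\begin{align*}
\|w(t)\|_\alpha \lesssim \int_{t_j}^t \|D_{t,\ell} w(s)\|_\alpha\, ds,
\end{align*}
and then $D_{t,\ell} w = -w\cdot\grad v_\ell - \divop(w\otimes w) - \mathcal R^{-1}\text{-type terms} + (\text{Leray proj. of } \divop R_\ell)$. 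The dominant term is the contribution of $\divop R_\ell$, which by \eqref{eq:Rl_estimate} is bounded by $\|R_\ell\|_{1+\alpha} \lesssim \lambda_q \cdot \epsilon_q\lambda_q^{-3\alpha}\delta_{q+1} = \epsilon_q\lambda_q^{1-3\alpha}\delta_{q+1}$; integrating over a time interval of length $2\tau_q$ gives $\|w(t)\|_\alpha \lesssim \tau_q\epsilon_q\lambda_q^{1-2\alpha}\delta_{q+1}$ (absorbing a $\lambda_q^\alpha$ to be safe), which matches \eqref{eq:vj-vl_est} with $\jmath=\kappa=0$. The lower-order terms $w\cdot\grad v_\ell$ and $\divop(w\otimes w)$ are then absorbed via Grönwall since they carry an extra factor of $\|w\|$. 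The higher derivative estimates ($\jmath+\kappa \geq 1$) follow by differentiating the transport equation, commuting $\grad^{\jmath+\kappa}$ past $D_{t,\ell}$ (which produces commutator terms involving $\grad v_\ell$, controlled by \eqref{eq:vj_and_vl_bound}), and using the mollification estimates \eqref{eq:Rl_estimate}, \eqref{eq:grad_v_l} together with the parameter relations in Appendix~\ref{app:parameters}; the loss from commutators is compensated by the $\ell^{-\kappa}$ on the right-hand side. For \eqref{eq:vj-vl_transport_est}, one simply does not integrate in time: $D_{t,\ell} w$ is estimated directly by the same right-hand side without the $\tau_q$ factor, i.e.\ $\|D_{t,\ell} w\|_{\jmath+\kappa+\alpha} \lesssim \|R_\ell\|_{1+\jmath+\kappa} + (\text{lower order}) \lesssim \epsilon_q\ell^{-\kappa}\lambda_q^{\jmath+1-2\alpha}\delta_{q+1}$, where the lower-order terms are controlled using the just-proven bound \eqref{eq:vj-vl_est} for $w$ together with \eqref{eq:vj_and_vl_bound}.

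The main obstacle I anticipate is bookkeeping the commutator terms at high derivative order while respecting the ``no loss of derivatives'' requirement: when I differentiate $\jmath+\kappa$ times and commute with $v_\ell\cdot\grad$, I produce terms like $\grad^{a} v_\ell \cdot \grad^{b+1} w$ with $a+b = \jmath+\kappa$, and I need each to be controlled by $\epsilon_q\ell^{-\kappa}\lambda_q^{\jmath+1-2\alpha}\delta_{q+1}$ uniformly. This forces me to use the bound \eqref{eq:vj_and_vl_bound} in the sharp form $\|v_\ell\|_{1+a+\alpha} \lesssim \ell^{-a}\lambda_q^{1+\alpha}\delta_{q+1}^{1/2}$ (reading off $\ell_{q-1}$ vs $\ell_q$ carefully via \eqref{eq:double_skip_iteration}), combined inductively with the estimate for $w$ at lower orders, and then to check that the product of length-scale losses $\ell^{-a}\cdot\ell^{-b} = \ell^{-(\jmath+\kappa)}$ doesn't exceed the allotted $\ell^{-\kappa}$ — which works precisely because the genuine derivatives on $w$ (the index $\jmath$) cost only powers of $\lambda_q$, not $\ell^{-1}$, exactly as the statement is formulated. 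The restriction $\jmath \leq 7$ (and $\jmath \leq 6$ in \eqref{eq:glue4}, which is the next proposition) is dictated by how many derivatives the local existence estimate in Lemma~\ref{lem:local_existence} and the inductive bounds \eqref{eq:uq_vq_goodbounds}–\eqref{eq:Fq_goodbounds} can supply after these commutations.
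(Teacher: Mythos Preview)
Your overall strategy---derive a transport equation for $w=v_j-v_\ell$, bound $\|D_{t,\ell}w\|_\alpha$, integrate via \eqref{eq-transport}, and close by Gr\"onwall---is exactly the route the paper takes. The genuine gap is in the pressure. You write $D_{t,\ell}w$ as ``$-w\cdot\grad v_\ell-\divop(w\otimes w)-\mathcal{R}^{-1}\text{-type terms}+(\text{Leray proj.\ of }\divop R_\ell)$'', but $\nabla(p_j-p_\ell)$ is determined by \emph{all} the nonlinear terms, not just $\divop R_\ell$; in particular it contains $\mathcal{P}_1(v_\ell\cdot\nabla w)$, which na\"ively costs $\|v_\ell\|_\alpha\|w\|_{1+\alpha}$. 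Using the crude a~priori bound $\|w\|_{1+\alpha}\le\|v_j\|_{1+\alpha}+\|v_\ell\|_{1+\alpha}\lesssim\lambda_q^{1+\alpha}\delta_{q+1}^{1/2}$ and integrating over $\tau_q$ gives a contribution of order $\lambda_q^{-2\alpha}$, far larger than the target $\tau_q\epsilon_q\lambda_q^{1-2\alpha}\delta_{q+1}\sim\epsilon_q\lambda_q^{-5\alpha}\delta_{q+1}^{1/2}$; the loop does not close. The paper fixes this with the identity \eqref{eq:ident_P1}, $\mathcal{P}_1(X\cdot\nabla Y)=\mathcal{P}_1(Y\cdot\nabla X)$ for divergence-free $X,Y$, which converts the dangerous $v_\ell\cdot\nabla w$ into $w\cdot\nabla v_\ell$ and yields the clean formula \eqref{eq:grad_pl-pj}. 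Once that is in place every term in $D_{t,\ell}w$ is of the form $w\cdot\nabla(\text{bounded})$ plus $\Div R_\ell$, and your Gr\"onwall argument goes through.

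A smaller point: your worry about products $\ell^{-a}\cdot\ell^{-b}$ accumulating in the higher-order commutators is not how the paper proceeds. The term $[\partial^\theta,D_{t,\ell}](v_\ell-v_j)$ is handled by the interpolation $\|v_\ell\|_{\jmath+\kappa+\alpha}\|v_\ell-v_j\|_{1+\alpha}\lesssim\|v_\ell\|_{1+\jmath+\kappa+\alpha}\|v_\ell-v_j\|_{\alpha}$, so that every contribution is (top order on one factor)$\times$($C^\alpha$ on the other). Then \eqref{eq:vj_and_vl_bound} supplies a single factor $\ell^{-\kappa}\lambda_q^{\jmath+1+\alpha}\delta_{q+1}^{1/2}$ and the already-proven $C^\alpha$ bound on $w$ closes the Gr\"onwall at level $\jmath+\kappa$ without any $\ell^{-\jmath}$ leakage; this, and not the derivative budget on $v_j$, is what makes the split $\lambda_q^\jmath\ell^{-\kappa}$ in the statement come out.
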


\begin{proof}
Note that
\begin{align}
\left(\partial_{t}+v_{\ell}\cdot\nabla\right)\left(v_{\ell}-v_{j}\right) & =-\left(v_{\ell}-v_{j}\right)\cdot\nabla v_{j}-\nabla\left(p_{\ell}-p_{j}\right)+\Div R_{\ell}\label{eq:vl_vj_subtract}
\end{align}
and
\begin{align}
\nabla\left(p_{\ell}-p_{j}\right) & =\mathcal{P}_{1}\left(-\left(v_{\ell}-v_{j}\right)\cdot\nabla v_{\ell}-\left(v_{\ell}-v_{j}\right)\cdot\nabla v_{j}+\Div R_{\ell}\right)\label{eq:grad_pl-pj},
\end{align}
where $\mathcal{P}_{1}$ is as defined in \Subsecref{geo}, and where we have implicitly used the identity (\ref{eq:ident_P1}).  Then, as usual, by the transport estimate (\ref{eq-transport}),
\begin{align*}
\left\Vert v_{\ell}-v_{\jmath}\right\Vert _{\alpha} & \lesssim\int_{t_{\jmath}}^{t}\mathrm{d}s\;\left\Vert \left(v_{\ell}-v_{j}\right)\cdot\nabla v_{j}(s)\right\Vert _{\alpha}+\left\Vert \nabla\left(p_{\ell}-p_{j}\right)(s)\right\Vert _{\alpha}+\left\Vert R_{\ell}(s)\right\Vert _{1+\alpha}\\
 & \lesssim\tau_{q}\left\Vert R_{\ell}\right\Vert _{C_{t}^{0}C_{x}^{1+\alpha}}+\int_{t_{j}}^{t}\mathrm{d}s\;\left\Vert \left(v_{\ell}-v_{j}\right)(s)\right\Vert _{\alpha}\left(\left\Vert v_{j}\right\Vert _{1+\alpha}+\left\Vert v_{\ell}\right\Vert _{1+\alpha}\right)\\
 & \lesssim\tau_{q}\epsilon_{q}\lambda_{q}^{1-2\alpha}\delta_{q+1}+\int_{t_{j}}^{t}\mathrm{d}s\;\left\Vert \left(v_{\ell}-v_{j}\right)(s)\right\Vert _{\alpha}\lambda_{q}^{1+\alpha}\delta_{q+1}^{\frac{1}{2}}
\end{align*}
where we used (\ref{eq:Rl_estimate}), (\ref{eq:vj_bound}), (\ref{eq:grad_v_l})
and (\ref{eq:double_skip_iteration}).  By Gr\"onwall and (\ref{eq:tau_q}), (\ref{eq:vj_bound}), (\ref{eq:grad_v_l})
and (\ref{eq:Rl_estimate}) we conclude 
\begin{align*}
\left\Vert v_{\ell}-v_{j}\right\Vert _{\alpha} & \lesssim\tau_{q}\epsilon_{q}\lambda_{q}^{1-2\alpha}\delta_{q+1}\exp\left(\tau_{q}\lambda_{q}^{1+\alpha}\delta_{q+1}^{\frac{1}{2}}\right)\lesssim\tau_{q}\epsilon_{q}\lambda_{q}^{1-2\alpha}\delta_{q+1},\\
\left\Vert \nabla\left(p_{\ell}-p_{j}\right)\right\Vert _{\alpha} & \lesssim\tau_{q}\epsilon_{q}\lambda_{q}^{1-2\alpha}\delta_{q+1}\left(\lambda_{q}^{1+\alpha}\delta_{q+1}^{\frac{1}{2}}\right)+\epsilon_{q}\lambda_{q}^{1-2\alpha}\delta_{q+1}\lesssim\epsilon_{q}\lambda_{q}^{1-2\alpha}\delta_{q+1},
\end{align*}
and
\begin{align*}
\left\Vert D_{t,\ell}\left(v_{\ell}-v_{j}\right)\right\Vert _{\alpha} & \lesssim\tau_{q}\epsilon_{q}\lambda_{q}^{1-2\alpha}\delta_{q+1}\left(\lambda_{q}^{1+\alpha}\delta_{q+1}^{\frac{1}{2}}\right)+\epsilon_{q}\lambda_{q}^{1-2\alpha}\delta_{q+1}\lesssim\epsilon_{q}\lambda_{q}^{1-2\alpha}\delta_{q+1}.
\end{align*}

Let $\theta$ be a multi-index with $\left|\theta\right|=\jmath+\kappa$
for any $1\leq\jmath\leq7,\kappa\geq0$; then by (\ref{eq:Rl_estimate}),
(\ref{eq:tau_q}), and (\ref{eq:grad_pl-pj}) we have
\begin{align*}
\left\Vert \partial^{\theta}\nabla\left(p_{\ell}-p_{j}\right)\right\Vert _{\alpha} & \lesssim\left\Vert R_{\ell}\right\Vert _{1+\jmath+\kappa+\alpha}+\left\Vert v_{\ell}-v_{j}\right\Vert _{\alpha}\left(\left\Vert v_{j}\right\Vert _{1+\jmath+\kappa+\alpha}+\left\Vert v_{\ell}\right\Vert _{1+\jmath+\kappa+\alpha}\right)\\
 & \phantom{\lesssim\left\Vert R_{\ell}\right\Vert _{1+\jmath+\kappa+\alpha}}+\left\Vert v_{\ell}-v_{j}\right\Vert _{\jmath+\kappa+\alpha}\left(\left\Vert v_{j}\right\Vert _{1+\alpha}+\left\Vert v_{\ell}\right\Vert _{1+\alpha}\right)\\
 & \lesssim\ell^{-\kappa}\epsilon_{q}\lambda_{q}^{\jmath+1-2\alpha}\delta_{q+1}+\tau_{q}\epsilon_{q}\lambda_{q}^{1-2\alpha}\delta_{q+1}\left(\ell^{-\kappa}\lambda_{q}^{\jmath+1+\alpha}\delta_{q+1}^{\frac{1}{2}}\right)\\
 & \phantom{\lesssim\ell^{-\kappa}\epsilon_{q}\lambda_{q}^{\jmath+1-2\alpha}\delta_{q+1}}+\left\Vert v_{\ell}-v_{j}\right\Vert _{\jmath+\kappa+\alpha}\left(\lambda_{q}^{1+\alpha}\delta_{q+1}^{\frac{1}{2}}\right)\\
 & \lesssim\ell^{-\kappa}\epsilon_{q}\lambda_{q}^{\jmath+1-2\alpha}\delta_{q+1}+\left\Vert v_{\ell}-v_{j}\right\Vert _{\jmath+\kappa+\alpha}\left(\lambda_{q}^{1+\alpha}\delta_{q+1}^{\frac{1}{2}}\right).
\end{align*}
Therefore, by (\ref{eq:vl_vj_subtract}) we have 
\[
\left\Vert \partial^{\theta}D_{t,\ell}\left(v_{\ell}-v_{j}\right)\right\Vert _{\alpha}\lesssim\ell^{-\kappa}\epsilon_{q}\lambda_{q}^{\jmath+1-2\alpha}\delta_{q+1}+\left\Vert v_{\ell}-v_{j}\right\Vert _{\jmath+\kappa+\alpha}\left(\lambda_{q}^{1+\alpha}\delta_{q+1}^{\frac{1}{2}}\right).
\]
Invoking the transport estimate once again, we have 
\begin{align*}
\left\Vert \partial^{\theta}\left(v_{\ell}-v_{j}\right)\right\Vert _{\alpha} & \lesssim\int_{t_{0}}^{t}\mathrm{d}s\;\left\Vert D_{t,\ell}\partial^{\theta}\left(v_{\ell}-v_{j}\right)(s)\right\Vert _{\alpha}\\
 & \lesssim\int_{t_{0}}^{t}\mathrm{d}s\;\left\Vert \left[D_{t,\ell},\partial^{\theta}\right]\left(v_{\ell}-v_{j}\right)(s)\right\Vert _{\alpha}+\ell^{-\kappa}\epsilon_{q}\lambda_{q}^{\jmath+1-2\alpha}\delta_{q+1}\\
 & \phantom{\lessapprox}+\left\Vert \left(v_{\ell}-v_{j}\right)(s)\right\Vert _{\jmath+\kappa+\alpha}\left(\lambda_{q}^{1+\alpha}\delta_{q+1}^{\frac{1}{2}}\right).
\end{align*}
By interpolation, (\ref{eq:tau_q}) and (\ref{eq:vj_bound}), we have
\begin{align*}
\left\Vert \left[D_{t,\ell},\partial^{\theta}\right]\left(v_{\ell}-v_{j}\right)(s)\right\Vert _{\alpha} & \lesssim\left\Vert v_{\ell}\right\Vert _{1+\alpha}\left\Vert \left(v_{\ell}-v_{j}\right)(s)\right\Vert _{\jmath+\kappa+\alpha}\\
&\hspace{0.2in}+\left\Vert v_{\ell}\right\Vert _{\jmath+\kappa+\alpha}\left\Vert v_{\ell}-v_{j}\right\Vert _{1+\alpha}\\
 & \lesssim\left\Vert v_{\ell}\right\Vert _{1+\alpha}\left\Vert \left(v_{\ell}-v_{j}\right)(s)\right\Vert _{\jmath+\kappa+\alpha}\\
&\hspace{0.2in}+\left\Vert v_{\ell}\right\Vert _{1+\jmath+\kappa+\alpha}\left\Vert v_{\ell}-v_{j}\right\Vert _{\alpha}\\
 & \lesssim\left(\lambda_{q}^{1+\alpha}\delta_{q+1}^{\frac{1}{2}}\right)\left\Vert \left(v_{\ell}-v_{j}\right)(s)\right\Vert _{\jmath+\kappa+\alpha}\\
 &\quad+\left(\ell^{-\kappa}\lambda_{q}^{1+\jmath+\alpha}\delta_{q+1}^{\frac{1}{2}}\right)\tau_{q}\epsilon_{q}\lambda_{q}^{1-2\alpha}\delta_{q+1}\\
 & \lesssim\ell^{-\kappa}\epsilon_{q}\lambda_{q}^{\jmath+1-2\alpha}\delta_{q+1}\\
&\hspace{0.2in}+\left\Vert \left(v_{\ell}-v_{j}\right)(s)\right\Vert _{\jmath+\kappa+\alpha}\left(\lambda_{q}^{1+\alpha}\delta_{q+1}^{\frac{1}{2}}\right).
\end{align*}

Combining these estimates and using Gr\"onwall's inequality, we have 
\begin{align*}
\left\Vert v_{\ell}-v_{j}\right\Vert _{\jmath+\kappa+\alpha} & \lesssim\tau_{q}\epsilon_{q}\ell^{-\kappa}\lambda_{q}^{\jmath+1-2\alpha}\delta_{q+1}\exp\left(\tau\lambda_{q}^{1+\alpha}\delta_{q+1}^{\frac{1}{2}}\right)\ls\tau_{q}\epsilon_{q}\ell^{-\kappa}\lambda_{q}^{\jmath+1-2\alpha}\delta_{q+1}
\end{align*}
and
\begin{align*}
\left\Vert \partial^{\theta}D_{t,\ell}\left(v_{\ell}-v_{j}\right)\right\Vert _{\alpha} & \ls\epsilon_{q}\ell^{-\kappa}\lambda_{q}^{\jmath+1-2\alpha}\delta_{q+1}.
\end{align*}
This completes the proof of \Propref{vglue}.
\end{proof}

The above proposition proves (\ref{eq:glue1}) for any $t\in\left[t_{j},t_{j}+2\tau_{q}\right]$.  To proceed, we next define the potentials $z_{j}{\,\coloneqq\,}\mathcal{B}v_{j}$ and $z_{\ell}{\,\coloneqq\,}\mathcal{B}v_{\ell}$, where $\mathcal{B}$ is the Biot-Savart operator defined in (\ref{def-B}), and establish analogous estimates.
\begin{prop}
\label{prop:zglue}For $0\leq \jmath\leq 7$, $\kappa\geq0$ and $t\in\left[t_{j},t_{j}+2\tau_{q}\right]$:
\begin{align}
\|z_{j}-z_{\ell}\|_{\jmath+\kappa+\alpha} & \lesssim_{\kappa}\tau_{q}\epsilon_{q}\ell^{-\kappa}\lambda_{q}^{\jmath-2\alpha}\delta_{q+1}\label{eq:z_est}\\
\|D_{t,\ell}\left(z_{j}-z_{\ell}\right)\|_{\jmath+\kappa+\alpha} & \lesssim_{\kappa}\epsilon_{q}\ell^{-\kappa}\lambda_{q}^{\jmath-2\alpha}\delta_{q+1}\label{eq:z_transport_est}
\end{align}
\end{prop}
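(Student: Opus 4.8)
The plan is to exploit that $z_j-z_\ell=\mathcal{B}(v_j-v_\ell)$ and that the Biot--Savart operator $\mathcal{B}=(-\Delta)^{-1}d\flat$ is a Fourier multiplier of order $-1$, so that every estimate of Proposition~\ref{prop:vglue} for $v_j-v_\ell$ transfers to one for $z_j-z_\ell$ with one more power of $\lambda_q^{-1}$ — which is exactly the difference between (\ref{eq:z_est})--(\ref{eq:z_transport_est}) and (\ref{eq:vj-vl_est})--(\ref{eq:vj-vl_transport_est}). Concretely, $\mathcal{B}$ maps $C^{r}$ to $C^{r+1}$ for every non-integer $r>0$; applying this with $r=(\jmath+\kappa+\alpha)-1$, which is non-integer and positive precisely when $\jmath+\kappa\geq1$, (\ref{eq:vj-vl_est}) gives $\|z_j-z_\ell\|_{\jmath+\kappa+\alpha}\lesssim_{\kappa}\|v_j-v_\ell\|_{\jmath+\kappa-1+\alpha}\lesssim_{\kappa}\tau_q\epsilon_q\ell^{-\kappa}\lambda_q^{\jmath-2\alpha}\delta_{q+1}$ (when $\jmath=0$ one distributes the $\kappa-1$ derivatives via (\ref{eq:vj-vl_est}) with $\jmath=0$ together with the parameter bound $\ell_q\lambda_q\ll1$). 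This establishes (\ref{eq:z_est}) in every case except $\jmath=\kappa=0$. For (\ref{eq:z_transport_est}) one uses the identity $D_{t,\ell}(z_j-z_\ell)=\mathcal{B}\,D_{t,\ell}(v_j-v_\ell)+[v_\ell\cdot\nabla,\,\mathcal{B}](v_j-v_\ell)$: the first term is handled by the mapping property of $\mathcal{B}$ applied to (\ref{eq:vj-vl_transport_est}), and the commutator $[v_\ell\cdot\nabla,\,\mathcal{B}]$ is an operator of order $\le0$ whose coefficients are controlled by $\nabla v_\ell$ via (\ref{eq:grad_v_l}).

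It therefore remains to prove the $C^\alpha$ bound, $\jmath=\kappa=0$, for which the operator estimate for $\mathcal{B}$ alone does not suffice and one argues through the transport estimate (\ref{eq-transport}) for $z_j-z_\ell$ along the coarse flow $v_\ell$, with vanishing data $(z_j-z_\ell)(t_j)=\mathcal{B}(v_j(t_j)-v_\ell(t_j))=0$. Since $\mathcal{B}$ commutes with $\partial_t$ and annihilates gradients (so the pressure difference drops out), (\ref{eq:vl_vj_subtract}) yields
\[
D_{t,\ell}(z_j-z_\ell)=-\mathcal{B}\,\divop R_\ell+\mathcal{B}\,\divop\big((v_\ell-v_j)\otimes v_j\big)+[v_\ell\cdot\nabla,\,\mathcal{B}](v_j-v_\ell),
\]
where the advective term has been written as a divergence using that $v_\ell-v_j$ and $v_j$ are divergence-free. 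The term $\mathcal{B}\,\divop R_\ell$ sets the size: $\mathcal{B}\divop$ is a Calder\'on--Zygmund operator of order $0$, so $\|\mathcal{B}\divop R_\ell\|_\alpha\lesssim\|R_\ell\|_\alpha\lesssim\epsilon_q\lambda_q^{-2\alpha}\delta_{q+1}$ by (\ref{eq:Rl_estimate}) and interpolation, and upon integrating over an interval of length $\lesssim\tau_q$ this contributes exactly $\tau_q\epsilon_q\lambda_q^{-2\alpha}\delta_{q+1}$, matching (\ref{eq:z_est}).

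The remaining two terms must be shown to be subordinate. The advective term $\mathcal{B}\divop((v_\ell-v_j)\otimes v_j)$ is again estimated through the order-$0$ operator $\mathcal{B}\divop$ in terms of $\|(v_\ell-v_j)\otimes v_j\|_\alpha$, controlled by Proposition~\ref{prop:vglue}, the sup-norm of $v_j$, and the local-existence bounds (\ref{eq:vj_bound}); the Biot--Savart commutator $[v_\ell\cdot\nabla,\,\mathcal{B}](v_j-v_\ell)$ is a lower-order operator with coefficients controlled by $\nabla v_\ell$, handled using (\ref{eq:grad_v_l}) and the smallness of $v_j-v_\ell$. In each step the balance of the timescale $\tau_q$ — in particular $\tau_q\|v_\ell\|_1\lesssim\lambda_q^{-3\alpha}\ll1$, which keeps the Gr\"onwall exponent bounded exactly as in the proof of Proposition~\ref{prop:vglue} — is used to absorb these contributions into a quantity of the claimed size or into $(\text{rate})\cdot\|z_j-z_\ell\|_\alpha$. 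Gr\"onwall's inequality then closes (\ref{eq:z_est}) at $\jmath=\kappa=0$, and (\ref{eq:z_transport_est}) there is read directly off the displayed identity for $D_{t,\ell}(z_j-z_\ell)$.

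The main obstacle is precisely that the mapping property of $\mathcal{B}$ does not, by itself, furnish the full derivative gain at the endpoint $C^\alpha$; the gain has to be recovered from the algebraic structure — the pressure gradient being killed by $\mathcal{B}$, the advection recast as a divergence so that the order-$0$ operator $\mathcal{B}\divop$ appears, and the Biot--Savart commutator being genuinely of lower order — after which the verification that all the error terms stay below $\tau_q\epsilon_q\lambda_q^{-2\alpha}\delta_{q+1}$ (and its derivative counterpart), with the $\ell^{-\kappa}$ powers tracked and the parameter comparisons of Appendix~\ref{app:parameters} invoked, is the bulk of the work.
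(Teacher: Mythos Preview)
Your handling of the case $\jmath+\kappa\geq1$ is correct and matches the paper: once a full derivative is available, the Calder\'on--Zygmund nature of $\nabla\mathcal{B}$ transfers the $v_j-v_\ell$ bounds of Proposition~\ref{prop:vglue} to $z_j-z_\ell$ with the required gain of $\lambda_q^{-1}$.

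The gap is at the $C^\alpha$ endpoint, and it is not merely a matter of bookkeeping. In your transport equation the right-hand side is expressed in terms of $v_j-v_\ell$, and the advective term bounded via the order-zero operator $\mathcal{B}\divop$ gives
\[
\big\|\mathcal{B}\divop\big((v_\ell-v_j)\otimes v_j\big)\big\|_\alpha\lesssim\|(v_\ell-v_j)\otimes v_j\|_\alpha\gtrsim\|v_\ell-v_j\|_\alpha\cdot\|v_j\|_0\sim\tau_q\epsilon_q\lambda_q^{1-2\alpha}\delta_{q+1}.
\]
After integrating over an interval of length $\sim\tau_q$ this contributes $\tau_q^2\epsilon_q\lambda_q^{1-2\alpha}\delta_{q+1}$, which exceeds the target $\tau_q\epsilon_q\lambda_q^{-2\alpha}\delta_{q+1}$ by the factor $\tau_q\lambda_q\sim\delta_{q+1}^{-1/2}\lambda_q^{-3\alpha}\gg1$. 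The commutator term suffers from the same defect: even granting that $[v_\ell\cdot\nabla,\mathcal{B}]$ is order $-1$, its natural H\"older estimate still involves $\|v_j-v_\ell\|_\alpha$ paired with high norms of $v_\ell$, reproducing the unwanted $\tau_q\lambda_q$. None of these terms feed back into $\|z_j-z_\ell\|_\alpha$, so Gr\"onwall cannot repair the loss. The same issue contaminates your argument for (\ref{eq:z_transport_est}).

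The paper's route is genuinely different here: rather than leave $v_j-v_\ell$ on the right-hand side, it uses the identity $v_\ell-v_j=\sharp\delta\widetilde{z}$ (valid since both fields are divergence-free with equal mean) together with a tensor computation to derive a \emph{closed} transport equation
\[
D_{t,\ell}\widetilde{z}=(-\Delta)^{-1}d\,\Div\big(\nabla v_{j,\ell}\ast\widetilde{z}+R_\ell\big)+(-\Delta)^{-1}\delta\,\Div\big(\nabla v_\ell\ast\widetilde{z}\big),
\]
in which only $\widetilde{z}$ itself (no derivatives of $\widetilde{z}$, no $v_j-v_\ell$) appears alongside $\nabla v_{j,\ell}$ through Calder\'on--Zygmund operators. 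This makes the Gr\"onwall rate $\|\nabla v_{j,\ell}\|_\alpha\sim\lambda_q^{1+\alpha}\delta_{q+1}^{1/2}\sim\tau_q^{-1}$, exactly what is needed to close at $C^\alpha$ and to read off (\ref{eq:z_transport_est}) directly from (\ref{eq:dtl_inter}). The missing step in your proposal is precisely this algebraic rewriting that converts the quadratic terms into the form $\text{CZ}(\nabla v\ast\widetilde{z})$.
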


\begin{proof}
Let $\widetilde{z}{\,\coloneqq\,}z_{\ell}-z_{j}$.  From (\ref{eq:grad_pl-pj}) we deduce
\begin{align*}
\partial_{t}\widetilde{z}+\nabla_{v_{\ell}}\widetilde{z} & =\left(-\Delta\right)^{-1}d\circ\Div\left(\nabla v_{j,\ell}*\widetilde{z}+R_{\ell}\right)+\left(-\Delta\right)^{-1}\delta\circ\Div\left(\nabla v_{\ell}*\widetilde{z}\right)
\end{align*}
where $v_{j,\ell}$ could be $v_{j}$ or $v_{\ell}$ and $*$ represents a tensor contraction whose details are not important; see \cite[Proposition~11]{bulutEpochsRegularityWild2022} for the calculation.

Let $0\leq \jmath\leq 7$ and $\kappa\geq0$ be given.  Since $\left(-\Delta\right)^{-1}d\circ\Div$
and $\left(-\Delta\right)^{-1}\delta\circ\Div$ are Calderón-Zygmund operators, we estimate 
\begin{align}
\begin{array}{c}
\left\Vert D_{t,\ell}\widetilde{z}(s)\right\Vert _{\jmath+\kappa+\alpha}\end{array} & \lesssim\left\Vert \nabla v_{j,\ell}\right\Vert _{\jmath+\kappa+\alpha}\left\Vert \widetilde{z}(s)\right\Vert _{\alpha}+\left\Vert \nabla v_{j,\ell}\right\Vert _{\alpha}\left\Vert \widetilde{z}(s)\right\Vert _{\jmath+\kappa+\alpha}+\left\Vert R_{\ell}\right\Vert _{\jmath+\kappa+\alpha}\nonumber \\
 & \lesssim\ell^{-\kappa}\lambda_{q}^{\jmath+1+\alpha}\delta_{q+1}^{\frac{1}{2}}\left\Vert \widetilde{z}(s)\right\Vert _{\alpha}+\lambda_{q}^{1+\alpha}\delta_{q+1}^{\frac{1}{2}}\left\Vert \widetilde{z}(s)\right\Vert _{\jmath+\kappa+\alpha}\nonumber\\
 &\quad+\ell^{-\kappa}\epsilon_{q}\lambda_{q}^{\jmath-2\alpha}\delta_{q+1}.\label{eq:dtl_inter}
\end{align}
Once again by the transport estimate we have
\begin{align}
\left\Vert \widetilde{z}\left(t\right)\right\Vert _{\alpha} & \lesssim\int_{t_{j}}^{t}\mathrm{d}s\;\left\Vert D_{t,\ell}\widetilde{z}\left(s\right)\right\Vert _{\alpha}\;\label{eq:modified_transpot}\\
 & \lesssim\int_{t_{j}}^{t}\mathrm{d}s\;\lambda_{q}^{1+\alpha}\delta_{q}^{\frac{1}{2}}\left\Vert \widetilde{z}(s)\right\Vert _{\alpha}+\epsilon_{q}\lambda_{q}^{-2\alpha}\delta_{q+1}.\nonumber 
\end{align}
By Gr\"onwall's inequality, we obtain 
\[
\left\Vert \widetilde{z}\left(t\right)\right\Vert _{\alpha}\ls\epsilon_{q}\tau_{q}\lambda_{q}^{-2\alpha}\delta_{q+1}.
\]
For $1\leq \jmath\leq 7$ and $\kappa\geq0$, as $\nabla\mathcal{B}$ is Calderón-Zygmund,
we have 
\begin{align*}
\left\Vert z_{j}-z_{\ell}\right\Vert _{\jmath+\kappa+\alpha} & \lesssim\left\Vert \nabla\left(z_{j}-z_{\ell}\right)\right\Vert _{\jmath-1+k+\alpha}=\left\Vert \nabla\mathcal{B}\left(v_{j}-v_{\ell}\right)\right\Vert _{\jmath-1+k+\alpha}\\
 & \lesssim\left\Vert v_{j}-v_{\ell}\right\Vert _{\jmath-1+k+\alpha}\ls\tau_{q}\epsilon_{q}\ell^{-\kappa}\lambda_{q}^{\jmath-2\alpha}\delta_{q+1}.
\end{align*}
We conclude
\[
\left\Vert D_{t,\ell}\widetilde{z}(s)\right\Vert _{\jmath+\kappa+\alpha}\ls\ell^{-\kappa}\epsilon_{q}\lambda_{q}^{\jmath-2\alpha}\delta_{q+1}
\]
for $1\leq \jmath\leq 7$ and $\kappa\geq0$.
\end{proof}

It remains to establish the estimates for $\overline{R}_q$.  This is the content of the next proposition.

\begin{prop}
\label{prop:glued_stress_est}With $\overline{R}_{q}$ defined in (\ref{eq:glued_stress}), we have the bounds 
\begin{align}
\|\overline{R}_{q}\|_{\jmath+\kappa+\alpha} & \lesssim_{\kappa}\ell^{-\kappa}\lambda_{q}^{\jmath-2\alpha}\delta_{q+1}\label{eq:gluedR}\\
\|(\dd_{t}+\overline{v}_{q}\cdot\grad)\overline{R}_{q}\|_{\jmath+\kappa+\alpha} & \lesssim_{\kappa}\left(\epsilon_{q}\tau_{q}\right)^{-1}\ell^{-\kappa}\lambda_{q}^{\jmath-2\alpha}\delta_{q+1}\label{eq:gluedR_transport}
\end{align}
for $0\leq \jmath\leq 7$, $\kappa\geq0$ and $t\in\left[t_{j+1},t_{j+1}+\epsilon_{q}\tau_{q}\right]$.
\end{prop}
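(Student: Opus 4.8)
The plan is to bound the two summands of $\overline{R}_q$ in (\ref{eq:glued_stress}) separately, in each case reducing everything to the gluing bounds for $v$ and its Biot-Savart potential already obtained in \Propref{vglue} and \Propref{zglue}. Since $v_j-v_{j+1}=(v_j-v_\ell)-(v_{j+1}-v_\ell)$ and $[t_{j+1},t_{j+1}+\epsilon_q\tau_q]\subset[t_j,t_{j+2}]\cap[t_{j+1},t_{j+1}+2\tau_q]$, \Propref{vglue} (used both with index $j$ and with index $j+1$) and \Propref{zglue} apply throughout the interval in question; the same decomposition applies to the potentials $z_j-z_{j+1}=(z_j-z_\ell)-(z_{j+1}-z_\ell)$. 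I would also record, for use on the $\dd_t\chi_j^b\,\mathcal R(v_j-v_{j+1})$ term, that $\mathcal R$ annihilates constants, so $\mathcal Rw=\mathcal R\mathcal P_2w=\mathcal R\sharp\delta\mathcal Bw$ for every divergence-free vector field $w$; hence $\mathcal R(v_j-v_{j+1})=(\mathcal R\sharp\delta)(z_j-z_{j+1})$, with $\mathcal R\sharp\delta$ a zeroth-order Calder\'on-Zygmund operator and therefore bounded on the H\"older spaces occurring here.

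For (\ref{eq:gluedR}): the Schauder bound for $\mathcal R\sharp\delta$ together with (\ref{eq:z_est}) gives $\|\mathcal R(v_j-v_{j+1})\|_{\jmath+\kappa+\alpha}\lesssim_\kappa\|z_j-z_{j+1}\|_{\jmath+\kappa+\alpha}\lesssim_\kappa\tau_q\epsilon_q\ell^{-\kappa}\lambda_q^{\jmath-2\alpha}\delta_{q+1}$, and multiplying by $\|\dd_t\chi_j^b\|_0\lesssim(\epsilon_q\tau_q)^{-1}$ from (\ref{eq:cutoff_time_deri}) yields the claimed bound for the first summand. For the quadratic summand $\chi_j^b(1-\chi_j^b)(v_j-v_{j+1})\otimes(v_j-v_{j+1})$ I would use the product inequality of \Subsecref{geo} to bound it by a multiple of $\|v_j-v_{j+1}\|_0\,\|v_j-v_{j+1}\|_{\jmath+\kappa+\alpha}$, hence by $(\tau_q\epsilon_q)^2\ell^{-\kappa}\lambda_q^{\jmath+2-4\alpha}\delta_{q+1}^2$ via (\ref{eq:vj-vl_est}); since $\tau_q\sim\delta_{q+1}^{-1/2}\lambda_q^{-1-3\alpha}$ by (\ref{eq:tau_q}), the surplus factor $\tau_q^2\lambda_q^{2-2\alpha}\delta_{q+1}\sim\lambda_q^{-6\alpha}\ll1$, so this summand is dominated by $\ell^{-\kappa}\lambda_q^{\jmath-2\alpha}\delta_{q+1}$.

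For the transport estimate (\ref{eq:gluedR_transport}) I would write $D_{t,q}=D_{t,\ell}+(\overline{v}_q-v_\ell)\cdot\grad$ and apply $D_{t,q}$ to (\ref{eq:glued_stress}) by the Leibniz rule, organizing the terms according to which factor is differentiated. When the derivative lands on a temporal cutoff it produces $\|\dd_t^2\chi_j^b\|_0\lesssim(\epsilon_q\tau_q)^{-2}$---one power of $(\epsilon_q\tau_q)^{-1}$ beyond (\ref{eq:gluedR}), which is exactly the loss permitted in (\ref{eq:gluedR_transport}). When $D_{t,q}$ lands on $\mathcal R(v_j-v_{j+1})$ I would commute it through $\mathcal R\sharp\delta$---either by a standard commutator estimate between a Calder\'on-Zygmund operator and the transport field, or equivalently by differentiating $\mathcal R$ directly using the $v_j$, $v_{j+1}$ equations---with cost governed by $\|\overline{v}_q\|_{1+\alpha}\lesssim\lambda_q^{1+\alpha}\delta_{q+1}^{1/2}$ (from (\ref{eq:vj_and_vl_bound})); this leaves $D_{t,\ell}(z_j-z_{j+1})$, handled by (\ref{eq:z_transport_est}), together with the drift term $(\overline{v}_q-v_\ell)\cdot\grad(z_j-z_{j+1})$, which is of lower order by (\ref{eq:glue1}) (already available on this interval) and (\ref{eq:z_est}). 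When $D_{t,q}$ lands on the quadratic summand, one Leibniz factor receives $D_{t,q}(v_j-v_{j+1})=D_{t,\ell}(v_j-v_{j+1})+(\overline{v}_q-v_\ell)\cdot\grad(v_j-v_{j+1})$, bounded via (\ref{eq:vj-vl_transport_est}) and (\ref{eq:glue1}); in every case the residual powers of $\tau_q\epsilon_q\lambda_q\delta_{q+1}^{1/2}\sim\lambda_q^{-3\alpha}$ (and of $\epsilon_q\le1$) are absorbed into the target.

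The step I expect to be the main obstacle is precisely the treatment of the antidivergence $\mathcal R$ under the material derivative: since $\mathcal R$ does not commute with $D_{t,q}$, one must route through the Biot-Savart potentials---for which \Propref{zglue} already supplies the transport estimate---and pay the zeroth-order commutator above, or else reproduce the analysis of \Propref{vglue} directly at the level of $\mathcal R(v_j-v_{j+1})$. Apart from this, the proof is largely bookkeeping: keeping track of the numerous Leibniz cross-terms and checking that each residual factor is a negative power of $\lambda_q$. In contrast to the good-set estimate (\ref{eq:buq-bul_goodtimes}), no lower bound on the mollification scale $\ell_q$ enters here; the only role of $\ell$ is to carry the $\kappa$ auxiliary derivatives through (\ref{eq:vj-vl_est})--(\ref{eq:z_transport_est}).
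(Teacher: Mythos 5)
Your proposal is correct and follows essentially the same route as the paper: rewriting $\mathcal R(v_j-v_{j+1})$ as $\mathcal R\delta(z_j-z_{j+1})$ and invoking \Propref{zglue}, bounding the quadratic term via \Propref{vglue}, splitting $D_{t,q}=D_{t,\ell}+(\overline v_q-v_\ell)\cdot\nabla$, and paying for the Calder\'on-Zygmund/transport commutator with \Lemref{singular_comm}. The only discrepancy is a harmless arithmetic slip in the surplus factor for the quadratic term ($\epsilon_q^2\tau_q^2\lambda_q^{2-2\alpha}\delta_{q+1}\sim\epsilon_q^2\lambda_q^{-8\alpha}$ rather than $\lambda_q^{-6\alpha}$), which does not affect the conclusion.
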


In the proof of \Propref{glued_stress_est}, we will find it useful to recall a commutator inequality due to \cite[Lemma 1]{constantin2015lagrangian} and \cite[Proposition D.1]{buckmasterOnsagerConjectureAdmissible2017}.
\begin{lem}
\label{lem:singular_comm}Suppose $\alpha\in\left(0,1\right)$, $N\in\mathbb{N}_{0}$,
$\mathcal{T}$ is a Calderón-Zygmund operator, and $b\in C^{N+1,\alpha}$
is a divergence-free vector field on $\mathbb{T}^{d}$.  Then for any
$f\in C^{N+\alpha}\left(\mathbb{T}^{d}\right)$, we have
\[
\left\Vert \left[\mathcal{T},b\cdot\nabla\right]f\right\Vert _{N+\alpha}\lesssim_{N,\alpha,\mathcal{T}}\left\Vert b\right\Vert _{1+\alpha}\left\Vert f\right\Vert _{N+\alpha}+\left\Vert b\right\Vert _{N+1+\alpha}\left\Vert f\right\Vert _{\alpha}.
\]
\end{lem}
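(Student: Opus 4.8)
The plan is to reduce the commutator to a purely multiplicative one and then exploit the standard gain-of-one-derivative smoothing of commutators between a Calder\'on--Zygmund operator and a Lipschitz function. First I would use the divergence-free hypothesis to write $b\cdot\nabla f=\partial_k(b^kf)$ (summation over $k$) and recall that, on $\mathbb{T}^d$, a Calder\'on--Zygmund operator $\mathcal{T}$ is a Fourier multiplier $m(\xi)$, homogeneous of degree $0$ away from $\xi=0$, so that $\mathcal{T}\partial_k=\partial_k\mathcal{T}$ exactly (at $\xi=0$ both sides annihilate the mode, since $\partial_k$ kills constants). Consequently
\[
[\mathcal{T},b\cdot\nabla]f=\partial_k\mathcal{T}(b^kf)-b^k\partial_k\mathcal{T}f=\partial_k\big([\mathcal{T},b^k]f\big)+(\partial_kb^k)\,\mathcal{T}f=\partial_k\big([\mathcal{T},b^k]f\big),
\]
where $[\mathcal{T},b^k]f{\,\coloneqq\,}\mathcal{T}(b^kf)-b^k\mathcal{T}f$ and the last equality uses $\partial_kb^k=\divop b=0$. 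Thus it suffices to bound $\|[\mathcal{T},g]f\|_{N+1+\alpha}$ for $g=b^k$, with the single lost derivative supplied by the outer $\partial_k$.

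The core of the argument is the claim that $[\mathcal{T},g]$ is, to leading order, a pseudodifferential operator of order $-1$, whose symbol is $\tfrac1i\partial_\xi m(\xi)\cdot\nabla g(x)$ plus lower-order terms: the crucial point is that the symbol involves $\nabla g$, not $g$, which is exactly what produces the $\|b\|_{1+\alpha}$ on the right-hand side. This can be seen from the kernel representation $([\mathcal{T},g]f)(x)=\int K(x-y)\,(g(y)-g(x))\,f(y)\,dy$ — with $K$ the Calder\'on--Zygmund kernel of $\mathcal{T}$ (periodized, plus a harmless smooth correction on the torus) — in which the factor $g(y)-g(x)$ contributes an extra power of $|x-y|$, turning $|K(z)|\lesssim|z|^{-d}$ into the locally integrable $|z|^{-d+1}$ and correspondingly improving the kernel of $\nabla[\mathcal{T},g]$. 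Equivalently, a Littlewood--Paley splitting $g=g_{\ll j}+g_{\sim j}+g_{\gg j}$ tested against a frequency-$2^j$ block of $f$ works: in the low--high term one Taylor-expands $m$ about the dominant frequency, the constant term cancels in the commutator and the linear term carries the gain together with $\nabla g_{\ll j}$, while the high--high and high--low pieces are summed directly using $\|g_{\sim j}\|_0$ and $\|g_{\gg j}\|_0$. Tracking derivatives, this yields, for every $M\ge0$,
\[
\|[\mathcal{T},g]f\|_{M+1+\alpha}\lesssim_{M,\alpha,\mathcal{T}}\|g\|_{1+\alpha}\|f\|_{M+\alpha}+\|g\|_{M+1+\alpha}\|f\|_\alpha,
\]
the two terms being the endpoints in which the $M$ extra derivatives fall entirely on $f$, respectively entirely on $g$; the cross terms $\|g\|_{1+i+\alpha}\|f\|_{M-i+\alpha}$ are absorbed by Hölder interpolation and Young's inequality.

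Applying the last display with $M=N$ and $g=b^k$, then using $\|\partial_k h\|_{N+\alpha}\le\|h\|_{N+1+\alpha}$, gives
\[
\|[\mathcal{T},b\cdot\nabla]f\|_{N+\alpha}\le\big\|[\mathcal{T},b^k]f\big\|_{N+1+\alpha}\lesssim_{N,\alpha,\mathcal{T}}\|b\|_{1+\alpha}\|f\|_{N+\alpha}+\|b\|_{N+1+\alpha}\|f\|_\alpha,
\]
as claimed. The main obstacle is making the order-$(-1)$ symbolic calculus (or the equivalent Littlewood--Paley bookkeeping) precise in Hölder spaces while pinning down the \emph{exact} derivative count in the two endpoint terms; the divergence-free reduction in the first paragraph is what makes this possible, since it converts the a priori order-zero operator $[\mathcal{T},b\cdot\nabla]$ into $\partial_k$ of a genuinely smoothing operator whose symbol sees only $\nabla b$. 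Since this estimate is classical, one may alternatively simply invoke \cite{constantin2015lagrangian,buckmasterOnsagerConjectureAdmissible2017}, where it is proved in detail.
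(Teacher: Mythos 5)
Your proposal is correct in outline, and it matches the standard argument: the paper itself gives no proof of this lemma, citing instead \cite[Lemma 1]{constantin2015lagrangian} and \cite[Proposition D.1]{buckmasterOnsagerConjectureAdmissible2017}, whose proofs rest on exactly the kernel cancellation $K(x-y)\bigl(b(y)-b(x)\bigr)$ that you describe. Your reduction $[\mathcal{T},b\cdot\nabla]f=\partial_{k}\bigl([\mathcal{T},b^{k}]f\bigr)$ via $\divop b=0$ is a clean repackaging (the cited proofs integrate by parts inside the kernel representation directly, and do not actually need $b$ divergence-free since the extra term $(\divop b)\,\mathcal{T}f$ is harmless), and your induction on $M$ with interpolation to absorb the cross terms $\|g\|_{1+i+\alpha}\|f\|_{M-i+\alpha}$ closes correctly. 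Two small caveats: the identity $\mathcal{T}\partial_{k}=\partial_{k}\mathcal{T}$ requires $\mathcal{T}$ to be of convolution type (a Fourier multiplier), which is true of every operator to which the lemma is applied in this paper ($\mathcal{R}\delta$, $\mathcal{P}_{1}$, etc.) but is not literally implied by the phrase ``Calder\'on--Zygmund operator''; and the base-case smoothing estimate $\|[\mathcal{T},g]\partial f\|_{\alpha}\lesssim\|g\|_{1+\alpha}\|f\|_{\alpha}$ is asserted rather than proved, though it is precisely the classical content of the cited references, which you are entitled to invoke as the paper does.
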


With this lemma in hand, we establish the proposition.

\begin{proof}[Proof of \Propref{glued_stress_est}]
Note that, combining (\ref{eq:cutoff_time_deri}), (\ref{eq:tau_q}), (\ref{eq:z_est}), and (\ref{eq:vj-vl_est}), as well as the boundedness of the Calderón-Zygmund operator $\mathcal{R}\delta$, we obtain
\begin{align}
\|\dd_{t}\chi_{j}^{b}\mathcal{R}(v_{j}-v_{j+1})\|_{\jmath+\kappa+\alpha}\lesssim_{\kappa}\epsilon_{q}^{-1}\tau_{q}^{-1}\|z_{j}-z_{j+1}\|_{\jmath+\kappa+\alpha}\lesssim_{\kappa}\ell^{-\kappa}\lambda_{q}^{\jmath-2\alpha}\delta_{q+1}\label{eq:1stRv}
\end{align}
and
\begin{align}
\|\chi_{j}^{b}(1-\chi_{j}^{b})(v_{j}-v_{j+1})\otimes(v_{j}-v_{j+1})\|_{\jmath+\kappa+\alpha}&\lesssim_{\kappa}(\epsilon_{q}\tau_{q}\lambda_{q}\delta_{q+1})^{2}\lambda_{q}^{\jmath}\ell^{-\kappa}\nonumber \\
&\ls\epsilon_{q}^{2}\ell^{-\kappa}\lambda_{q}^{\jmath-5\alpha}\delta_{q+1}\label{eq:2ndRV}
\end{align}
for $0\leq \jmath\leq 7$, $\kappa\geq0$ and $t\in\left[t_{j+1},t_{j+1}+\epsilon_{q}\tau_{q}\right]$.

Now, observe that (\ref{eq:glued_stress}), (\ref{eq:1stRv}), and (\ref{eq:2ndRV})
imply
\[
\|\overline{R}_{q}\|_{\jmath+\kappa+\alpha}\lesssim_{\kappa}\ell^{-\kappa}\lambda_{q}^{\jmath-2\alpha}\delta_{q+1}
\]
for $0\leq \jmath\leq 7$ and $\kappa\geq0$.  Then (\ref{eq:gluedR}) follows from (\ref{eq:time-length}).  For the material derivative, we have 
\begin{align*}
\left\Vert \left(\partial_{t}+\nabla_{\overline{v}_{q}}\right)\overline{R}_{q}\right\Vert _{\jmath+\kappa+\alpha} & \leq\left\Vert D_{t,\ell}\overline{R}_{q}\right\Vert _{\jmath+\kappa+\alpha}+\left\Vert \nabla_{\overline{v}_{q}-v_{\ell}}\overline{R}_{q}\right\Vert _{\jmath+\kappa+\alpha}.
\end{align*}
One can compute
\begin{align*}
D_{t,\ell}\overline{R}_{q} & =\left(\partial_{t}^{2}\chi_{j}^{b}\right)\mathcal{R}\delta\left(z_{j}-z_{j+1}\right)\\
 & +\left(\partial_{t}\chi_{j}^{b}\right)\mathcal{R}\delta D_{t,\ell}\left(z_{j}-z_{j+1}\right)+\left(\partial_{t}\chi_{j}^{b}\right)\left[v_{\ell}\cdot\nabla,\mathcal{R}\delta\right]\left(z_{j}-z_{j+1}\right)\\
 & +\partial_{t}\left(\left(\chi_{j}^{b}\right)^{2}-\chi_{j}^{b}\right)\left(v_{j}-v_{j+1}\right)\otimes\left(v_{j}-v_{j+1}\right)\\
 & +\left(\left(\chi_{j}^{b}\right)^{2}-\chi_{j}^{b}\right)\Big(D_{t,\ell}\left(v_{j}-v_{j+1}\right)\otimes\left(v_{j}-v_{j+1}\right)\\
&\hspace{2.2in}+\left(v_{j}-v_{j+1}\right)\otimes D_{t,\ell}\left(v_{j}-v_{j+1}\right)\Big).
\end{align*}

The term involving $\left[v_{\ell}\cdot\nabla,\mathcal{R}\delta\right]$
can be handled by \Lemref{singular_comm}.  Then by (\ref{eq:cutoff_time_deri}),
(\ref{eq:gluedR}), (\ref{eq:tau_q}), \Propref[s]{vglue} and \ref{prop:zglue},
we conclude 
\begin{align*}
\left\Vert \left[v_{\ell}\cdot\nabla,\mathcal{R}\delta\right]\left(z_{j}-z_{j+1}\right)\right\Vert _{\jmath+\kappa+\alpha} & \ls_{\kappa}\left\Vert v_{\ell}\right\Vert _{1+\alpha}\left\Vert z_{j}-z_{j+1}\right\Vert _{\jmath+\kappa+\alpha}\\
&\quad\quad+\left\Vert v_{\ell}\right\Vert _{\jmath+\kappa+1+\alpha}\left\Vert z_{j}-z_{j+1}\right\Vert _{\alpha}\\
 & \ls\lambda_{q}^{1+\alpha}\delta_{q+1}^{\frac{1}{2}}\left(\epsilon_{q}\tau_{q}\ell^{-\kappa}\lambda_{q}^{\jmath-2\alpha}\delta_{q+1}\right)\ls\epsilon_{q}\ell^{-\kappa}\lambda_{q}^{\jmath-4\alpha}\delta_{q+1},
\end{align*}
and
\begin{align*}
\|(\dd_{t}+\overline{v}_{q}\cdot\grad)\overline{R}_{q}\|_{\jmath+\kappa+\alpha} & \lesssim_{\kappa}\left(\epsilon_{q}\tau_{q}\right)^{-1}\ell^{-\kappa}\lambda_{q}^{\jmath-2\alpha}\delta_{q+1}+\tau_{q}^{-1}\ell^{-\kappa}\lambda_{q}^{\jmath-2\alpha}\delta_{q+1}\\
 & \phantom{\lesssim_{\kappa}}+\epsilon_{q}\tau_{q}\ell^{-\kappa}\lambda_{q}^{2+\jmath-4\alpha}\delta_{q+1}^{2}+\epsilon_{q}^{2}\tau_{q}\ell^{-\kappa}\lambda_{q}^{2+\jmath-4\alpha}\delta_{q+1}^{2}\\
 & \ls\left(\epsilon_{q}\tau_{q}\right)^{-1}\ell^{-\kappa}\lambda_{q}^{\jmath-2\alpha}\delta_{q+1}.
\end{align*}
\end{proof}

\subsection*{Region (III): good-bad interface}

It remains to establish \Propref{glue_est} in the third region, covering the interface between good and bad intervals, for which we consider pairs of indices $i$ and $j$ satisfying $\chi_{i}^{g}\chi_{j}^{b}\not\equiv0$.
Recall that $\supp(\chi_{i}^{g}\chi_{j}^{b})$
is an interval of length $\sim\epsilon_{q}\tau_{q}$ in which $R_{q}\equiv0$.  Within such an interval, the glued solution obeys
\begin{align*}
\overline{v}_{q}-v_{\ell} & =\chi_{i}^{g}\left(v_{q}-v_{\ell}\right)+\chi_{j}^{b}\left(v_{j}-v_{\ell}\right)\\
\dd_{t}\overline{v}_{q}+\divop\overline{v}_{q}\otimes\overline{v}_{q}+\grad\overline{p}_{q} & =\divop\left(\chi^{g}F_{q}+\chi^{b}F_{\ell}\right)+\divop\overline{R}_{q}\\
\overline{R}_{q} & {\,\coloneqq\,}\dd_{t}\chi_{i}^{g}\mathcal{R}(v_{q}-v_{j})-\chi_{i}^{g}(1-\chi_{i}^{g})(v_{q}-v_{j})\otimes(v_{q}-v_{j}).
\end{align*}
Estimating $v_{j}-v_{\ell}$ is precisely the same as in Region (II), treated above.  We thus focus on $v_{q}-v_{\ell}$.
\begin{prop}
\label{prop:vglue-1}For $\jmath\ge0$, $\kappa\geq0$ and $t\in\mathcal{G}_{q}+B\left(0,\epsilon_{q-1}\tau_{q-1}\right)$, we have
\begin{align}
\|v_{q}-v_{\ell}\|_{\jmath+\kappa+\alpha} & \lesssim\tau_{q}\epsilon_{q}\ell_{q}^{-\kappa}\lambda_{q}^{\jmath+1-2\alpha}\delta_{q+1}\label{eq:vestimate-1-1}
\end{align}
for $\jmath\leq 12$, and
\begin{align}
\|D_{t,\ell}\left(v_{q}-v_{\ell}\right)\|_{\jmath+\kappa+\alpha} & \lesssim\epsilon_{q}\ell_{q}^{-\kappa}\lambda_{q}^{\jmath+1-2\alpha}\delta_{q+1}\label{eq:vtransportestimate-1-1}
\end{align}
for $\jmath\leq 6$.
\end{prop}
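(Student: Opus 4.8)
The plan is to split the proof according to the two asserted bounds. The spatial bound (\ref{eq:vestimate-1-1}) is essentially already contained in the mollification estimate established in the course of proving \Propref{glue_est_for_u}: since $\ell_q \ll \epsilon_{q-1}\tau_{q-1}$, the estimate (\ref{eq:uq-ul_goodtimes}) applies on all of $\mathcal G_q + B(0,\epsilon_{q-1}\tau_{q-1})$ and gives $\|v_q - v_\ell\|_{\jmath+\kappa+\alpha} \ll \epsilon_q\lambda_q^{\jmath+\kappa-5\alpha}\delta_{q+1}^{1/2}$; since $\tau_q\lambda_q\delta_{q+1}^{1/2} = \lambda_q^{-3\alpha}$ by (\ref{eq:tau_q}) and $\ell_q^{-\kappa} \geq \lambda_q^{\kappa}$, the right-hand side is bounded by $\tau_q\epsilon_q\ell_q^{-\kappa}\lambda_q^{\jmath+1-2\alpha}\delta_{q+1}$, which is (\ref{eq:vestimate-1-1}). (One could equally well rederive this from $\|v_q - v_\ell\|_r \lesssim \ell_q^2\|v_q\|_{r+2}$ together with the good bounds (\ref{eq:uq_vq_goodbounds}) and the parameter inequalities of Appendix~\ref{app:parameters}.)

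For the material-derivative bound (\ref{eq:vtransportestimate-1-1}), set $\tilde v \coloneqq v_\ell - v_q$ and recall that on $\mathcal G_q + B(0,\epsilon_{q-1}\tau_{q-1})$ one has $R_q \equiv 0$, so $v_q$ solves forced Euler with force $\divop F_q$ while $(u_\ell,v_\ell,F_\ell,R_\ell)$ solves (\ref{eq:forced_Euler_dual_system}). Subtracting the two $v$-equations and using $\divop\tilde v = \divop v_\ell = 0$ gives
\begin{equation*}
D_{t,\ell}\tilde v = -\tilde v\cdot\grad v_q - \grad(p_\ell - p_q) + \divop(F_\ell - F_q) + \divop R_\ell,
\end{equation*}
and applying $\mathcal{P}_1$ (which annihilates $\dd_t\tilde v$) together with the identity (\ref{eq:ident_P1}) yields
\begin{equation*}
\grad(p_\ell - p_q) = \mathcal{P}_1\big(-\tilde v\cdot\grad v_\ell - \tilde v\cdot\grad v_q + \divop(F_\ell - F_q) + \divop R_\ell\big).
\end{equation*}
I would then estimate each term on the right in $C^{\jmath+\kappa+\alpha}$. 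For the transport terms, the product rule splits e.g.\ $\|\tilde v\cdot\grad v_q\|_{\jmath+\kappa+\alpha}$ into $\|\tilde v\|_{\jmath+\kappa+\alpha}\|v_q\|_{1+\alpha} + \|\tilde v\|_\alpha\|v_q\|_{\jmath+\kappa+1+\alpha}$, and both pieces close by inserting the already-established (\ref{eq:vestimate-1-1}) (so that the gain $\tau_q\epsilon_q$ is available), the good bounds (\ref{eq:uq_vq_goodbounds}) and (\ref{eq:grad_v_l}), the smallness $\tau_q\|v_\ell\|_{1+\alpha}\ls\lambda_q^{-2\alpha}\ll1$ coming from (\ref{eq:tau_q}) and (\ref{eq:double_skip_iteration}), and the remaining comparisons in Appendix~\ref{app:parameters}; since $\mathcal{P}_1$ and $\mathcal{P}_1\circ\divop$ are Calder\'on--Zygmund operators, the pressure term obeys the same bounds. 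The terms $\divop(F_\ell - F_q)$ and $\divop R_\ell$ are handled with (\ref{eq:Fl-fq_est-2}) and (\ref{eq:Rl_estimate}), where the key is to put the spatial order $\jmath+1$ into the loss-free slot of those estimates and only the extra differentiability $\kappa$ into the lossy one, giving $\|\divop(F_\ell - F_q)\|_{\jmath+\kappa+\alpha} + \|\divop R_\ell\|_{\jmath+\kappa+\alpha} \ls \epsilon_q\ell_q^{-\kappa}\lambda_q^{\jmath+1-3\alpha}\delta_{q+1}$, comfortably inside the target. Summing yields (\ref{eq:vtransportestimate-1-1}); the range $\jmath\leq6$ simply reflects the number of extra derivatives consumed in this computation (and matches the range needed in Region (III)).

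The main obstacle is precisely this loss-of-derivatives bookkeeping in the $F_\ell - F_q$ and $R_\ell$ terms: if one naively dumps all derivatives into the $\ell_q^{-\kappa}$-factor the estimate is off by powers of $\lambda_{q+1}/\lambda_q$ and will not close, so one must consistently exploit the sharp ``no derivative loss'' estimates, and the identity (\ref{eq:ident_P1}) for the pressure so that the gained smallness of $\tilde v$ is paired against a full-strength bound for $v_\ell$ rather than $\grad\tilde v$ --- exactly as in the parallel step of \Propref{vglue}. Everything else is a routine if lengthy application of the product rule, interpolation, and the parameter inequalities. Note that, in contrast to \Propref{vglue}, no Gr\"onwall argument is required here, since the mollification estimate (\ref{eq:vestimate-1-1}) already serves as the ``base case'' for the whole argument.
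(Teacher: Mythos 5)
Your proposal is correct and follows essentially the same route as the paper: the spatial bound (\ref{eq:vestimate-1-1}) is read off directly from the mollification estimate (\ref{eq:uq-ul_goodtimes}), and the material-derivative bound (\ref{eq:vtransportestimate-1-1}) is obtained by subtracting the two Euler--Reynolds systems (using $R_q\equiv0$ on this region), expressing $\grad(p_\ell-p_q)$ via $\mathcal{P}_1$ and (\ref{eq:ident_P1}), and estimating term by term with (\ref{eq:Fl-fq_est-2}), (\ref{eq:Rl_estimate}) and the already-proved spatial bound, with no Gr\"onwall argument. Your observation about routing the order-$\jmath+1$ derivatives through the loss-free slot of those estimates matches exactly how the paper closes the computation.
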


\begin{proof}
From (\ref{eq:uq-ul_goodtimes}) we have 
\[
\|v_{q}-v_{\ell}\|_{\jmath+\kappa+\alpha}\lesssim\epsilon_{q}\ell_{q-1}^{-\kappa-\jmath}\frac{\lambda_{q}^{1+\alpha}}{\lambda_{q+1}}\delta_{q+1}^{\frac{1}{2}}\ll\epsilon_{q}\ell_{q}^{-\kappa}\lambda_{q}^{\jmath-5\alpha}\delta_{q+1}^{\frac{1}{2}}
\]
So (\ref{eq:vestimate-1-1}) is proven.  Then as $R_{q}=0$ on this temporal region, we have (in analogy to (\ref{eq:vl_vj_subtract})--(\ref{eq:grad_pl-pj})), 
\begin{align}
\left(\partial_{t}+v_{\ell}\cdot\nabla\right)\left(v_{\ell}-v_{q}\right) & =-\left(v_{\ell}-v_{q}\right)\cdot\nabla v_{q}-\nabla\left(p_{\ell}-p_{q}\right)\nonumber \\
&\hspace{0.2in}+\Div\left(F_{\ell}-F_{q}+R_{\ell}\right)\label{eq:vl_vj_subtract-1}
\end{align}
and
\begin{align}
\nabla\left(p_{\ell}-p_{q}\right) & =\mathcal{P}_{1}\left(-\left(v_{\ell}-v_{q}\right)\cdot\nabla v_{\ell}-\left(v_{\ell}-v_{q}\right)\cdot\nabla v_{q}+\Div R_{\ell}\right).
\end{align}

For $0\leq \jmath\leq 6$, $\kappa\geq0$, and $t\in\mathcal{G}_{q}+B\left(0,\epsilon_{q-1}\tau_{q-1}\right)$, it then follows by (\ref{eq:Fl-fq_est-2}), (\ref{eq:vestimate-1-1}), and (\ref{eq:Rl_estimate}), that
\begin{align*}
\|D_{t,\ell}\left(v_{q}-v_{\ell}\right)\|_{\jmath+\kappa+\alpha} & \ls\|v_{q}-v_{\ell}\|_{\jmath+\kappa+\alpha}\left\Vert \nabla v_{q}\right\Vert _{\alpha}+\|v_{q}-v_{\ell}\|_{\alpha}\left\Vert \nabla v_{q}\right\Vert _{\jmath+\kappa+\alpha}\\
 & \phantom{\ls}+\left\Vert F_{\ell}-F_{q}\right\Vert _{1+\jmath+\kappa+\alpha}+\left\Vert R_{\ell}\right\Vert _{1+\jmath+\kappa+\alpha}\\
 & \ls\epsilon_{q}\ell_{q}^{-\kappa}\lambda_{q}^{\jmath+1-2\alpha}\delta_{q+1},
\end{align*}
as desired.
\end{proof}

The above proposition completes the proof of (\ref{eq:glue1}) in this region.  Next let us define the potentials $z_{q}{\,\coloneqq\,}\mathcal{B}v_{q},z_{\ell}{\,\coloneqq\,}\mathcal{B}v_{\ell}$ and $\widetilde{z}=z_{\ell}-z_{q}$.  Then (\ref{eq:buq-bul_goodtimes}) implies
\begin{align*}
\|z_{\ell}-z_{q}\|_{\jmath+\kappa+\alpha} & \ls\ell_{q-1}^{-\kappa-\jmath}\lambda_{q+1}^{-\frac{3}{2}}\epsilon_{q}\lambda_{q}^{\frac{1}{2}+\alpha}\delta_{q+1}^{\frac{1}{2}}\\
&\ll\epsilon_{q}\ell_{q}^{-\kappa}\lambda_{q}^{\jmath-1-5\alpha}\delta_{q+1}^{\frac{1}{2}}\\
&\sim\tau_{q}\epsilon_{q}\ell_{q}^{-\kappa}\lambda_{q}^{\jmath-2\alpha}\delta_{q+1}
\end{align*}
for $0\leq \jmath\leq 12$, $\kappa\geq0$, and $t\in\mathcal{G}_{q}+B\left(0,\epsilon_{q-1}\tau_{q-1}\right)$.

As in the proof of \Propref{zglue}, we observe that 
\[
\partial_{t}\widetilde{z}+\nabla_{v_{\ell}}\widetilde{z}=\left(-\Delta\right)^{-1}d\circ\Div\left(\nabla v_{j,\ell}*\widetilde{z}+F_{\ell}-F_{q}+R_{\ell}\right)+\left(-\Delta\right)^{-1}\delta\circ\Div\left(\nabla v_{\ell}*\widetilde{z}\right)
\]
so that
\begin{align}
\begin{array}{c}
\left\Vert D_{t,\ell}\widetilde{z}(s)\right\Vert _{\jmath+\kappa+\alpha}\end{array} & \lesssim\left\Vert \nabla v_{q,\ell}\right\Vert _{\jmath+\kappa+\alpha}\left\Vert \widetilde{z}(s)\right\Vert _{\alpha}+\left\Vert \nabla v_{q,\ell}\right\Vert _{\alpha}\left\Vert \widetilde{z}(s)\right\Vert _{\jmath+\kappa+\alpha}\nonumber \\
 & \phantom{\ls}+\left\Vert F_{\ell}-F_{q}\right\Vert _{\jmath+\kappa+\alpha}+\left\Vert R_{\ell}\right\Vert _{\jmath+\kappa+\alpha}\nonumber \\
 & \lesssim\ell^{-\kappa}\lambda_{q}^{\jmath+1+\alpha}\delta_{q+1}^{\frac{1}{2}}\left(\epsilon_{q}\lambda_{q}^{-1-5\alpha}\delta_{q+1}^{\frac{1}{2}}\right)+\epsilon_{q}\ell_{q}^{-\kappa}\lambda_{q}^{\jmath-2\alpha}\delta_{q+1}\nonumber\\ %\label{eq:dtl_inter-1}\\
 & \ls\epsilon_{q}\ell_{q}^{-\kappa}\lambda_{q}^{\jmath-2\alpha}\delta_{q+1}\nonumber 
\end{align}
for $0\leq \jmath\leq 6$, $\kappa\geq0$, and $t\in\mathcal{G}_{q}+B\left(0,\epsilon_{q-1}\tau_{q-1}\right)$.

Then, as with (\ref{eq:1stRv}) and (\ref{eq:2ndRV}), we have 
\begin{align}
\|\dd_{t}\chi_{i}^{g}\mathcal{R}(v_{q}-v_{j+1})\|_{\jmath+\kappa+\alpha} & \lesssim_{\kappa}\ell^{-\kappa}\lambda_{q}^{\jmath-2\alpha}\delta_{q+1}\nonumber\\ %\label{eq:1stRv-1}\\
\|\chi_{i}^{g}(1-\chi_{i}^{g})(v_{q}-v_{j+1})\otimes(v_{q}-v_{j+1})\|_{\jmath+\kappa+\alpha} & \lesssim_{\kappa}\epsilon_{q}^{2}\ell^{-\kappa}\lambda_{q}^{\jmath-5\alpha}\delta_{q+1}\nonumber %\label{eq:2ndRV-1}
\end{align}
 for any $0\leq \jmath\leq 7$, $\kappa\geq0$ and $t\in\supp(\chi_{i}^{g}\chi_{j}^{b})$.

From here, (\ref{eq:glue1}), (\ref{eq:glue3}), and (\ref{eq:glue4}) are immediate.  This establishes an anlogue of \Propref{glued_stress_est} for $t\in \supp(\chi_i^g\chi_j^b)$, which in turn completes the proof of \Propref{glue_est}.
%, and thus we conclude \Propref{glued_stress_est} 

\section{\protect\label{sec:Convex-integration-and}Perturbation estimates: the convex integration construction}

In this section, we establish the perturbation estimates which will be used to
complete the proof of \Propref{iterative}.  In particular, having localized the
Reynolds stress in time with gluing, the next step is high-frequency
perturbation of $\overline v_q$ in order to cancel $\overline R_q$.  The result
of this convex integration is as follows.

\begin{prop}
\label{prop:convex_int} There is a smooth pair $\left(v_{q+1},\widetilde{R}_{q+1}\right)$
such that 
\[
\dd_{t}v_{q+1}+\divop\left(v_{q+1}\otimes v_{q+1}\right)+\grad p_{q+1}=\divop\left(\chi^{g}F_{q}+\chi^{b}F_{\ell}\right)+\divop\widetilde{R}_{q}
\]
for some pressure $p_{q+1}.$ Moreover, $v_{q+1}=\overline{v}_{q},\;\widetilde{R}_{q+1}=0$
outside the temporal regions $\left[t_{j}-\epsilon_{q}\tau_{q},t_{j}+2\epsilon_{q}\tau_{q}\right]$
($j\in\mathcal{J}$), and we have the estimates
\begin{align}
\left\Vert v_{q+1}-\overline{v}_{q}\right\Vert _{\jmath} & \leq\frac{M}{2}\lambda_{q+1}^{\jmath}\delta_{q+1}^{\frac{1}{2}}\label{eq:perturb_1}\\
\left\Vert \widetilde{R}_{q+1}\right\Vert _{\jmath} & \leq\frac{M}{2}\epsilon_{q+1}\lambda_{q+1}^{\jmath-3\alpha}\delta_{q+2}\label{eq:finalR}
\end{align}
for $0\leq \jmath\leq 12$, where $M>1$ is a geometric constant depending on $d$ but not on $a,\beta,b,\sigma,\alpha$, and $q$.
\end{prop}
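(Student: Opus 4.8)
The plan is to run one step of the standard Hölder convex integration scheme (following \cite{buckmasterOnsagerConjectureAdmissible2017,isettProofOnsagerConjecture2018}) on the glued pair $(\overline v_q,\overline R_q)$ constructed in Section~\ref{sec:proofProp}, and to check that every error closes \emph{with the geometric constant $M$} and without loss of derivatives up to order $12$. A structural simplification is available here: since the energy is not prescribed (its surplus being absorbed by the force rather than cancelled), the amplitude profile may be taken to be the \emph{constant} $\rho\sim\delta_{q+1}$; the bound $\|\overline R_q\|_0\lesssim\lambda_q^{-2\alpha}\delta_{q+1}\ll\rho$ from (\ref{eq:glue3}) then keeps $\mathrm{Id}-\overline R_q/\rho$ inside the domain of the coefficient functions supplied by the geometric lemma, and every multiple of $\mathrm{Id}$ produced along the way is absorbed into $p_{q+1}$.

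\noindent\textbf{Construction of the perturbation.} First I would invoke the geometric lemma on $\mathbb{T}^d$ ($d\ge3$) to fix a finite set $\Lambda$ of rational directions, smooth coefficients $\gamma_\xi$ on a neighbourhood of $\mathrm{Id}$, and a family of stationary divergence-free waves $W_\xi$ oscillating at frequency $\lambda_{q+1}$ with the quadratic identity $\sum_\xi\gamma_\xi^2(S)\cdot(\text{mean of }W_\xi\otimes W_\xi)=S$; the constant $M$ in the conclusion is then fixed by $\#\Lambda$ and the $C^{12}$-norms of the $\gamma_\xi$ on a fixed ball, hence depends only on $d$. Let $\Phi$ be the backward flow of $\overline v_q$ on each perturbation interval $[t_j-\epsilon_q\tau_q,t_j+2\epsilon_q\tau_q]$; since $\epsilon_q\tau_q\|\overline v_q\|_1\lesssim\epsilon_q\lambda_q^{-4\alpha}\ll1$ by (\ref{eq:tau_q}) and (\ref{eq:glue2}), this flow is non-degenerate and close to the identity, so no finer time subdivision is needed. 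Put $a_\xi:=\rho^{1/2}\widetilde\chi\,\gamma_\xi(\mathrm{Id}-\overline R_q/\rho)$ for a temporal cutoff $\widetilde\chi$ equal to $1$ on $\supp\overline R_q$, supported in $\bigcup_{j\in\mathcal J}[t_j-\epsilon_q\tau_q,t_j+2\epsilon_q\tau_q]$ and obeying $\|\dd_t^N\widetilde\chi\|_0\lesssim_N(\epsilon_q\tau_q)^{-N}$, and set $w_{q+1}=w_o+w_c$ with $w_o:=\mathrm{Re}\sum_\xi a_\xi\,W_\xi(\lambda_{q+1}\Phi)$ and $w_c$ the standard corrector rendering $w_{q+1}$ divergence-free. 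Then $v_{q+1}:=\overline v_q+w_{q+1}$, with $\widetilde R_{q+1}$ the sum of the transport, Nash, oscillation and corrector errors written through the antidivergence (\ref{eq:antidiv2}), solves the stated Euler--Reynolds system for a suitable $p_{q+1}$, the low-frequency part of $w_o\otimes w_o$ cancelling $\overline R_q$ modulo the pressure; moreover $v_{q+1}=\overline v_q$ and $\widetilde R_{q+1}=0$ wherever $\widetilde\chi=0$, so $v_{q+1}=\overline v_q$ off $\bigcup_j[t_j-\epsilon_q\tau_q,t_j+2\epsilon_q\tau_q]$ as required.

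\noindent\textbf{Estimates.} For (\ref{eq:perturb_1}): differentiating $a_\xi$ and using (\ref{eq:glue3}) gives $\|a_\xi\|_{\jmath+\kappa}\lesssim_\kappa\ell^{-\kappa}\lambda_q^{\jmath}\delta_{q+1}^{1/2}$ in the range where $\overline R_q$ was controlled and the crude $\|a_\xi\|_N\lesssim_N\ell^{-N}\delta_{q+1}^{1/2}$ otherwise, while the transport estimate and Grönwall (with (\ref{eq:glue2})) give $\|\nabla\Phi-\mathrm{Id}\|_0\ll1$ and $\|\nabla^N\Phi\|_0\ll\ell^{-(N-1)}$ for $2\le N\le12$; feeding these and $\ell^{-1}\le\lambda_{q+1}$ into the stationary phase lemma (Lemma~\ref{stationaryphase}) yields $\|w_{q+1}\|_\jmath\lesssim\lambda_{q+1}^\jmath\delta_{q+1}^{1/2}$ for $0\le\jmath\le12$, the corrector being lower order by a factor $\lambda_q/\lambda_{q+1}\ll1$, and since every term but the principal geometric one carries a negative power of $\lambda_{q+1}$, choosing $a$ large recovers (\ref{eq:perturb_1}) with the $M$ fixed above. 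For (\ref{eq:finalR}) each error is handled through $\mathcal R$ and Lemma~\ref{stationaryphase}: the oscillation error is $O(\lambda_{q+1}^{-1+O(\alpha)}\|a\|_1\|a\|_0)=O(\lambda_{q+1}^{-1+O(\alpha)}\lambda_q\delta_{q+1})$, which is $\le\tfrac{M}{2}\epsilon_{q+1}\lambda_{q+1}^{-3\alpha}\delta_{q+2}$ exactly by the arithmetic behind (\ref{onehalfconstraint}), valid for all $\beta<\tfrac12$; the Nash error $\mathcal R(w_{q+1}\cdot\grad\overline v_q)$ is $O(\lambda_{q+1}^{-1+O(\alpha)}\delta_{q+1}^{1/2}\|\overline v_q\|_{1+\alpha})=O(\lambda_{q+1}^{-1+O(\alpha)}\lambda_q\delta_{q+1})$ by (\ref{eq:glue2})---this is the precise place where the \emph{alternating} scheme pays off, since $\|\overline v_q\|_{C^{1,\alpha}}$ is governed by $\lambda_q\delta_{q+1}^{1/2}$ and not by $\lambda_q\delta_q^{1/2}$; and the transport error, after using $D_{t,q}\Phi=0$ so that only $D_{t,q}a_\xi$ survives and invoking (\ref{eq:glue4}), is $O(\lambda_{q+1}^{-1}(\epsilon_q\tau_q)^{-1}\delta_{q+1}^{1/2})=O(\lambda_{q+1}^{-1}\epsilon_q^{-1}\lambda_q^{1+3\alpha}\delta_{q+1})$, which is $\le\tfrac{M}{2}\epsilon_{q+1}\lambda_{q+1}^{-3\alpha}\delta_{q+2}$ precisely when $\sigma\le\frac{(b-1)(1-2b\beta)}{b+1}-O(\alpha)$, i.e.\ by (\ref{eq:sigma_bound}) together with the smallness of $\alpha$. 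Repeating with $\kappa$ extra spatial derivatives (using the no-loss versions of all of the above) gives (\ref{eq:finalR}) for $0\le\jmath\le12$.

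\noindent\textbf{Main obstacle.} The substance of the argument is not any individual estimate but the simultaneous bookkeeping: every contribution beyond the principal geometric term must come with a prefactor that is a negative power of $\lambda_{q+1}$ (or an $\epsilon_q,\epsilon_{q+1}$ factor), so that, after choosing $a$ large in terms of the finitely many $A_\kappa+B_\kappa$, the conclusions hold with the \emph{same} geometric constant $M$ and up to $12$ derivatives without loss. This is exactly what forces the use of the optimized, low-loss gluing estimates (\ref{eq:glue2})--(\ref{eq:glue4}) of Section~\ref{sec:proofProp} together with the parameter hierarchy (\ref{eq:smallness_b})--(\ref{eq:alpha_bound}); I expect the transport-error bound, which saturates (\ref{eq:sigma_bound}) and reflects the $(\epsilon_q\tau_q)^{-1}$ cost of the sharp time cutoffs of the epochs-of-regularity construction, to be the tightest point of the argument.
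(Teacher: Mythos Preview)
Your proposal is correct and follows essentially the same route as the paper: Mikado-type building blocks composed with the backward Lagrangian flow of $\overline v_q$, constant amplitude $\delta_{q+1}^{1/2}$ (since the energy is not prescribed), the standard divergence corrector, and the decomposition of $\widetilde R_{q+1}$ into Nash, transport, and oscillation errors closed via the stationary phase lemma and the parameter relations (\ref{eq:stress_size_ind1}); you also correctly pinpoint the transport error as the one that saturates (\ref{eq:sigma_bound}). One small slip: Lemma~\ref{stationaryphase} is an antidivergence estimate and is not what gives (\ref{eq:perturb_1}); the perturbation bounds follow from direct differentiation and the product rule (as in Corollary~\ref{cor:There-is-}), with the stationary phase lemma reserved for the $\mathcal R$-based error estimates.
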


The proof of this proposition includes a delicate iteration to avoid loss of derivatives issues at the threshold between the ``good'' and ``bad'' epochs. 

%will
%be given in \Secref{Convex-integration-and}.
%We now seek to prove \Propref{convex_int}. 

The geometric construction for our convex integration procedure is largely the same as in \cite{bulutEpochsRegularityWild2022}, where more details 
and explanations can be found.  We recall the definition of the Mikado flows from \cite[Lemma 5.1]{buckmasterOnsagerConjectureAdmissible2017},
which is valid for any dimension $d\geq3$ (see also \cite[Section 4.1]{cheskidovSharpNonuniquenessNavierStokes2020}).
Indeed, for any compact subset $\mathcal{N}\subset\subset\mathcal{S}_{+}^{d\times d}$
there is a smooth vector field $W:\mathcal{N}\times\mathbb{T}^{d}\to\mathbb{R}^{d}$
such that 
\begin{align}
\Div_{\xi}W(R,\xi)\otimes W(R,\xi)  =0,\label{eq:divf1}
\end{align}
\begin{align*}
\Div_{\xi}W(R,\xi)=0,
\end{align*}
\begin{align*}
\dashint_{\mathbb{T}^{d}}W\left(R,\xi\right)\;\mathrm{d}\xi=0,
\end{align*}
and
\begin{align*}
\dashint_{\mathbb{T}^{d}}W(R,\xi)\otimes W(R,\xi)\;\mathrm{d}\xi=R.
\end{align*}
Unless otherwise noted, we set $\mathcal{N}=\overline{B_{1/2}(\textrm{Id)}}$.  One can define the smooth coefficient functions $a_k:\mathcal N\to\mathbb C$, $C_k:\mathcal N\to\mathbb C^{d\times d}$ such that we have the Fourier series
\begin{align*}
W\left(R,\xi\right) & =\sum_{k\in\mathbb{Z}^{d}\backslash\{0\}}a_{k}\left(R\right)e^{
i2\pi\left\langle k,\xi\right\rangle }\\
W(R,\xi)\otimes W(R,\xi) & =R+\sum_{k\in\mathbb{Z}^{d}\backslash\{0\}}C_{k}\left(R\right)e^{i2\pi\left\langle k,\xi\right\rangle }
\end{align*}
and such that
\eqn{
\|\grad_R^Na_k\|_0+\|\grad_R^Nb_k\|_0\lesssim_{N,M}\langle k\rangle^{-M}.
}

In the standard manner, we introduce local-in-time Lagrangian
coordinates.  We define the backwards transport flow $\Phi_{i}$ as the solution
to 
\begin{align}
\left(\partial_{t}+\overline{v}_{q}\cdot\nabla\right)\Phi_{i} & =0\\
\Phi_{i}\left(t_{i},\cdot\right) & =\mathrm{Id}_{\mathbb{T}^{d}}\label{phi}
\end{align}
as well as the forward characteristic flow $X_{i}$ as the the
flow generated by $\overline{v}_{q}$: 
\begin{align*}
\partial_{\underline{t}}X_{i}\left(\underline{t},\underline{x}\right) & =\overline{v}_{q}\left(\underline{t},X_{i}\left(\underline{t},\underline{x}\right)\right)\\
X_{i}\left(t_{i},\cdot\right) & =\mathrm{Id}_{\mathbb{T}^{d}}.
\end{align*}
By defining their spacetime versions
\begin{align*}
\mathbf{\Phi}_{i}\left(t,x\right) & {\,\coloneqq\,}\left(t,\Phi_{i}\left(t,x\right)\right)\\
\mathbf{X}_{i}\left(\underline{t},\underline{x}\right) & {\,\coloneqq\,}\left(\underline{t},X_{i}\left(\underline{t},\underline{x}\right)\right)
\end{align*}
we can conclude $\mathbf{X}_{i}=\left(\mathbf{\Phi}_{i}\right)^{-1}$,
and that $\mathbf{X}_{i}$ maps from the Lagrangian spacetime $\left(\underline{t},\underline{x}\right)$
to the Eulerian spacetime $\left(t,x\right)$.

As in \cite[Proposition 3.1]{buckmasterOnsagerConjectureAdmissible2017},
for any $1\leq \jmath\leq 6$, $\kappa\geq0$, and $\left|t-t_{i}\right|\lesssim\tau_{q}$, we have
\begin{align}
\left\Vert \nabla\Phi_{i}\left(t\right)-\mathrm{Id}\right\Vert _{0} & \lesssim\left|t-t_{i}\right|\left\Vert \nabla\overline{v}_{q}\right\Vert _{0}\lesssim\tau_{q}\lambda_{q}\delta_{q+1}^{\frac{1}{2}}\ls\lambda_{q}^{-3\alpha}\ll1\label{eq:phi_id}\\
\left\Vert \nabla\Phi_{i}\left(t\right)\right\Vert _{\jmath+\kappa} & \lesssim\left|t-t_{i}\right|\left\Vert \nabla\overline{v}_{q}\right\Vert _{\jmath+\kappa}\lesssim\tau_{q}\ell^{-\kappa}\lambda_{q}^{\jmath+1}\delta_{q+1}^{\frac{1}{2}}\ls\ell^{-\kappa}\lambda_{q}^{\jmath-3\alpha}\quad\label{eq:phi_id_2}
\end{align}
where we have used (\ref{eq:glue2}).

We now define
\[
\underline{R_{i}}{\,\coloneqq\,}\mathbf{X}_{i}^{*}\left(\textrm{Id}-\frac{\overline{R}_{q}}{\delta_{q+1}}\right)
\]
where we treat $\overline{R}_{q}$ as a $(2,0)$-tensor (more explicitly, we remark that we have the identity 
\begin{align}
\underline{R_{i}}\circ\mathbf{\Phi}_{i}=\nabla\Phi_{i}\left(\textrm{Id}-\frac{\overline{R}_{q}}{\delta_{q+1}}\right)\nabla\Phi_{i}^{T}\label{eq:R_phiii}
\end{align}
and we note that, for $\left|t-t_{i}\right|\lesssim\tau_{q}$, we have $\underline{R_{i}}\in B_{1/2}(\textrm{Id)}$, since $\grad\Phi_{i}$ is close to $\textrm{Id}$ and $\left\Vert \frac{\overline{R}_{q}}{\delta_{q+1}}\right\Vert _{0}\lesssim\lambda_{q}^{-2\alpha}$ by (\ref{eq:glue3})).

For each $i$ let $\rho_{i}$ be a smooth cutoff such that
\eqn{
\mathbf{1}_{\left[t_{i},t_{i}+\epsilon_{q}\tau_{q}\right]}\leq\rho_{i}\leq\mathbf{1}_{\left[t_{i}-\epsilon_{q}\tau_{q},t_{i}+2\epsilon_{q}\tau_{q}\right]}
}
obeying the bounds
\[
\left\Vert \partial_{t}^{N}\rho_{i}\right\Vert _{0}\lesssim\left(\epsilon_{q}\tau_{q}\right)^{-N}\;\forall N\in\mathbb{N}_{0}.
\]

We now define the perturbation 
\begin{align*}
w^{(o)} & {\,\coloneqq\,}\sum_{i}\delta_{q+1}^{1/2}\rho_{i}(t)\grad\Phi_{i}^{-1}W(\underline{R_{i}}\circ\mathbf{\Phi}_{i},\lambda_{q+1}\Phi_{i}).
\end{align*}
For $t\in\left[t_{i}-\epsilon_{q}\tau_{q},t_{i}+2\epsilon_{q}\tau_{q}\right]$,
in local-in-time Lagrangian coordinates with $$\underline{w^{(o)}}{\,\coloneqq\,}\mathbf{X}_{i}^{*}w^{(o)},$$
we have 
\begin{align*}
\underline{w^{(o)}} & =\delta_{q+1}^{1/2}\rho_{i}(\underline{t})W(\underline{R_{i}},\lambda_{q+1}\underline{x})\\
 & =\sum_{k\neq0}\underbrace{\delta_{q+1}^{1/2}\rho_{i}(\underline{t})a_{k}(\underline{R_{i}})}_{{\,\coloneqq\,}\underline{b_{i,k}}}e^{i2\pi\left\langle \lambda_{q+1}k,\underline{x}\right\rangle }=\sum_{k\neq0}\underline{b_{i,k}}e^{i2\pi\left\langle \lambda_{q+1}k,\underline{x}\right\rangle }
\end{align*}
and therefore, by defining $b_{i,k}{\,\coloneqq\,}\mathbf{\Phi}_{i}^{*}\underline{b_{i,k}}$
(extended by zero outside $\supp\rho_{i}$), we have
\[
w^{\left(o\right)}=\sum_{i}\sum_{k\neq0}b_{i,k}e^{i2\pi\left\langle \lambda_{q+1}k,\Phi_{i}\right\rangle }.
\]

To address the fact that $\div w^{(o)}\neq0$, we introduce an incompressibility corrector.  In particular, for
for $t\in\left[t_{i}-\epsilon_{q}\tau_{q},t_{i}+2\epsilon_{q}\tau_{q}\right]$, in local-in-time Lagrangian coordinates, we define 
\[
\underline{w^{\left(c\right)}}{\,\coloneqq\,}\sum_{k\neq0}\underbrace{\delta_{q+1}^{1/2}\rho_{i}(\underline{t})\Div_{\underline{x}}\left(\frac{k\wedge a_{k}\left(\underline{R_{i}}\right)}{i2\pi\lambda_{q+1}\left|k\right|^{2}}\right)}_{{\,\coloneqq\,}\underline{c_{i,k}}}e^{i2\pi\left\langle \lambda_{q+1}k,\underline{x}\right\rangle }=\sum_{k\neq0}\underline{c_{i,k}}e^{i2\pi\left\langle \lambda_{q+1}k,\underline{x}\right\rangle }.
\]
In Eulerian coordinates, we define $c_{i,k}{\,\coloneqq\,}\mathbf{\Phi}_{i}^{*}\underline{c_{i,k}}$
(again extended by zero outside $\supp\rho_{i}$), as well as
\begin{align*}
w^{(c)} & {\,\coloneqq\,}\sum_{i}\sum_{k\neq0}c_{i,k}e^{i2\pi\left\langle \lambda_{q+1}k,\Phi_{i}\right\rangle }
\end{align*}
to obtain $\underline{w^{(c)}}=\mathbf{X}_{i}^{*}w^{(c)}$ for $t\in\left[t_{i}-\epsilon_{q}\tau_{q},t_{i}+2\epsilon_{q}\tau_{q}\right]$.  Equipped with this corrector, we can now finally define 
\[
v_{q+1}{\,\coloneqq\,}\overline{v}_{q}+w_{q+1}.
\]

Note that the full perturbation
\[
w_{q+1}{\,\coloneqq\,}w^{(o)}+w^{(c)}
\]
is divergence-free.  Moreover,
\begin{align*}
\partial_{t}v_{q+1}+\Div\left(v_{q+1}\otimes v_{q+1}\right)&= \left(\partial_{t}\overline{v}_{q}+\Div\left(\overline{v}_{q}\otimes\overline{v}_{q}\right)\right)+\Div\left(w_{q+1}\otimes w_{q+1}\right)\\
 & \quad+\partial_{t}w_{q+1}+\Div\left(\overline{v}_{q}\otimes w_{q+1}\right)+\Div\left(w_{q+1}\otimes\overline{v}_{q}\right)\\
&=-\nabla\overline{p}_{q}+\Div\left(\overline{R}_{q}+w_{q+1}\otimes w_{q+1}\right)\\
 & \quad+D_{t,q}w_{q+1}+w_{q+1}\cdot\nabla\overline{v}_{q}+\Div\left(\chi^{g}F_{q}+\chi^{b}F_{\ell}\right)
\end{align*}
so we can define the stress as 
\begin{align*}
\widetilde{R}_{q+1} & {\,\coloneqq\,}R_{\mathrm{osc}}+R_{\mathrm{trans}}+R_{\mathrm{Nash}}
\end{align*}
where
\begin{align*}
R_{\mathrm{osc}} & {\,\coloneqq\,}\mathcal{R}\Div\left(\overline{R}_{q}+w_{q+1}\otimes w_{q+1}\right)\\
R_{\mathrm{trans}} & {\,\coloneqq\,}\mathcal{R}D_{t,q}w_{q+1}\\
R_{\mathrm{Nash}} & {\,\coloneqq\,}\mathcal{R}\left(w_{q+1}\cdot\nabla\overline{v}_{q}\right).
\end{align*}

We are now ready to proceed with the proof of the estimates (\ref{eq:perturb_1}) and (\ref{eq:finalR}) asserted in the statement of \Propref{convex_int}.

\subsection{Proof of (\ref{eq:perturb_1}): Perturbation estimates}

We now establish our main collection of perturbation estimates.

\begin{prop}
\label{prop:pertub_est}Throughout this subsection, we assume $t\in\left[t_{i}-\epsilon_{q}\tau_{q},t_{i}+2\epsilon_{q}\tau_{q}\right]$
and $N\in\mathbb{N}_{0}$.  Then for $\kappa\geq0$ and $k\in\mathbb{Z}^{d}\backslash\{0\}$,
we have, 
\begin{align}
\left\Vert \nabla\Phi_{i}\right\Vert _{N}+\left\Vert \nabla\Phi_{i}^{-1}\right\Vert _{\jmath+\kappa} & \lesssim_{\kappa}\ell^{-\kappa}\lambda_{q}^{\jmath}\label{eq:per_est1}\\
\left\Vert \underline{R_{i}}\circ\mathbf{\Phi}_{i}\right\Vert _{\jmath+\kappa} & \lesssim_{\kappa}\ell^{-\kappa}\lambda_{q}^{\jmath}\label{eq:per_est2}\\
\left\Vert b_{i,k}\right\Vert _{\jmath+\kappa} & \lesssim_{\kappa}\delta_{q+1}^{\frac{1}{2}}\ell^{-\kappa}\lambda_{q}^{\jmath}\left|k\right|^{-2d}\label{eq:per_est3}\\
\left\Vert c_{i,k}\right\Vert _{\jmath+\kappa} & \lesssim_{\kappa}\frac{\lambda_{q}}{\lambda_{q+1}}\delta_{q+1}^{\frac{1}{2}}\ell^{-\kappa}\lambda_{q}^{\jmath}\left|k\right|^{-2d}\label{eq:per_est4}\\
\left\Vert D_{t,q}\left(\nabla\Phi_{i}\right)\right\Vert _{\jmath+\kappa} & \lesssim_{\kappa}\lambda_{q}\delta_{q+1}^{\frac{1}{2}}\ell^{-\kappa}\lambda_{q}^{\jmath}\label{eq:per_est5}\\
\left\Vert D_{t,q}\left(\underline{R_{i}}\circ\mathbf{\Phi}_{i}\right)\right\Vert _{\jmath+\kappa} & \lesssim_{\kappa}\left(\epsilon_{q}\tau_{q}\right)^{-1}\ell^{-\kappa}\lambda_{q}^{\jmath}\label{eq:per_est6}\\
\left\Vert D_{t,q}b_{i,k}\right\Vert _{\jmath+\kappa} & \lesssim_{\kappa}\left(\epsilon_{q}\tau_{q}\right)^{-1}\delta_{q+1}^{\frac{1}{2}}\ell^{-\kappa}\lambda_{q}^{\jmath}\left|k\right|^{-2d}\label{eq:per_est7}
\end{align}
for $0\leq \jmath\leq 6$, and
\begin{align}
\left\Vert D_{t,q}c_{i,k}\right\Vert _{\jmath+\kappa} & \lesssim_{\kappa}\left(\epsilon_{q}\tau_{q}\right)^{-1}\frac{\lambda_{q}}{\lambda_{q+1}}\delta_{q+1}^{\frac{1}{2}}\ell^{-\kappa}\lambda_{q}^{\jmath}\left|k\right|^{-2d}\label{eq:per_est8}
\end{align}
for $0\leq \jmath\leq 5$.
\end{prop}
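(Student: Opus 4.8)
The plan is to obtain all eight bounds from three ingredients: the flow-map estimates (\ref{eq:phi_id})--(\ref{eq:phi_id_2}); the gluing estimates (\ref{eq:glue2}), (\ref{eq:glue3}), (\ref{eq:glue4}) controlling $\overline{v}_q$, $\overline{R}_q$ and $D_{t,q}\overline{R}_q$; and two elementary calculus tools, the Leibniz inequality $\|fg\|_r\lesssim\|f\|_0[g]_r+[f]_r\|g\|_0$ and a Faà di Bruno type composition estimate $\|F\circ u\|_N\lesssim_N\sum_{j=1}^N\|D^jF\|_0\sum_{m_1+\dots+m_j=N,\,m_i\ge1}\prod_i\|u\|_{m_i}$ valid when $\|u\|_0\lesssim1$, applied with $F$ equal to $a_k$ or $\nabla_R a_k$, for which $\|\nabla_R^N a_k\|_0\lesssim_{N,M}\langle k\rangle^{-M}$ (we fix $M=2d$). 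Throughout we use the parameter relations of Appendix~\ref{app:parameters}, chiefly $\ell_q^{-1}\gg\lambda_q$ and $\tau_q\lambda_q\delta_{q+1}^{1/2}\lesssim\lambda_q^{-3\alpha}\ll1$; the latter makes each material derivative of the flow cheap, and, together with (\ref{eq:per_est2}), the flexibility in splitting the derivative count $\jmath+\kappa$ means every term produced by Leibniz or the composition estimate is controlled by $\ell^{-\kappa}\lambda_q^\jmath$ times the appropriate amplitude.

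First I establish (\ref{eq:per_est1}): the bound on $\nabla\Phi_i$ is (\ref{eq:phi_id_2}) (with $\|\nabla\Phi_i\|_0\lesssim1$ from (\ref{eq:phi_id})), and that on the matrix inverse follows from the Neumann series $(\nabla\Phi_i)^{-1}=\sum_{n\ge0}(\mathrm{Id}-\nabla\Phi_i)^n$, which converges since $\|\mathrm{Id}-\nabla\Phi_i\|_0\ll1$ and whose derivatives are bounded by Leibniz. For (\ref{eq:per_est2}) I use the identity (\ref{eq:R_phiii}), $\underline{R_i}\circ\mathbf{\Phi}_i=\nabla\Phi_i(\mathrm{Id}-\delta_{q+1}^{-1}\overline{R}_q)(\nabla\Phi_i)^T$, distribute $\jmath+\kappa$ derivatives by Leibniz, and invoke (\ref{eq:per_est1}) together with $\|\delta_{q+1}^{-1}\overline{R}_q\|_{\jmath+\kappa}\lesssim\ell^{-\kappa}\lambda_q^{\jmath-2\alpha}$ from (\ref{eq:glue3}), the spare power $\lambda_q^{-2\alpha}$ absorbing the cross terms. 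The amplitude bounds (\ref{eq:per_est3})--(\ref{eq:per_est4}) are then immediate: $b_{i,k}=\delta_{q+1}^{1/2}\rho_i(t)\,a_k(\underline{R_i}\circ\mathbf{\Phi}_i)$, so the composition estimate applied to $F=a_k$ with input bound (\ref{eq:per_est2}) gives (\ref{eq:per_est3}) (with $\langle k\rangle^{-2d}$ coming from $\|\nabla_R^j a_k\|_0$), while $c_{i,k}$ carries in addition one spatial derivative and an explicit factor $\lambda_{q+1}^{-1}$, producing the gain $\lambda_q\lambda_{q+1}^{-1}$ in (\ref{eq:per_est4}).

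For the material derivatives the key identity is $D_{t,q}\nabla\Phi_i=[\nabla,D_{t,q}]\Phi_i=-(\nabla\overline{v}_q)^T\nabla\Phi_i$ (using $D_{t,q}\Phi_i=0$), which with (\ref{eq:glue2}) and (\ref{eq:per_est1}) yields (\ref{eq:per_est5}). Applying $D_{t,q}$ to (\ref{eq:R_phiii}) by Leibniz, using (\ref{eq:per_est5}) on the $\nabla\Phi_i$ factors and (\ref{eq:glue4}) on $\delta_{q+1}^{-1}\overline{R}_q$, gives (\ref{eq:per_est6}) once one notes that both $\lambda_q\delta_{q+1}^{1/2}$ and $(\epsilon_q\tau_q)^{-1}\lambda_q^{-2\alpha}$ are $\lesssim(\epsilon_q\tau_q)^{-1}$, the former by $\epsilon_q\tau_q\lambda_q\delta_{q+1}^{1/2}\ll1$. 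Finally, writing $D_{t,q}b_{i,k}=\delta_{q+1}^{1/2}(\partial_t\rho_i)\,a_k(\underline{R_i}\circ\mathbf{\Phi}_i)+\delta_{q+1}^{1/2}\rho_i\,(\nabla_R a_k)(\underline{R_i}\circ\mathbf{\Phi}_i)\cdot D_{t,q}(\underline{R_i}\circ\mathbf{\Phi}_i)$, the first term is bounded using $\|\partial_t\rho_i\|_0\lesssim(\epsilon_q\tau_q)^{-1}$ and (\ref{eq:per_est3}), the second using the composition estimate and (\ref{eq:per_est6}), giving (\ref{eq:per_est7}); the analogous computation for $c_{i,k}$ gives (\ref{eq:per_est8}), again with the $\lambda_q\lambda_{q+1}^{-1}$ gain and one fewer unit of derivative budget, whence the range $0\le\jmath\le5$.

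The main obstacle — though more bookkeeping than substance — is to check that in every instance of the composition and Leibniz rules the total derivative cost is paid at the correct rate, i.e.\ that splitting $\jmath+\kappa$ derivatives among several factors never produces anything worse than $\ell^{-\kappa}\lambda_q^\jmath$; this relies precisely on the fact that (\ref{eq:per_est2}) holds for every decomposition of the derivative count, so that each product $\prod_i\|\underline{R_i}\circ\mathbf{\Phi}_i\|_{m_i}$ with $\sum_i m_i=\jmath+\kappa$ is still $\lesssim\ell^{-\kappa}\lambda_q^\jmath$. The remaining care is to verify, using Appendix~\ref{app:parameters}, that the sub-leading contributions carrying $\lambda_q^{-\alpha}$, $\lambda_q^{-2\alpha}$ or $\lambda_q^{-3\alpha}$ (from $\overline{v}_q$, $\overline{R}_q$, and the smallness of $\tau_q$) do not spoil the stated bounds, and that the derivative budgets of (\ref{eq:glue2})--(\ref{eq:glue4}) are respected, which is what forces $\jmath\le6$ in (\ref{eq:per_est1})--(\ref{eq:per_est7}) and $\jmath\le5$ in (\ref{eq:per_est8}).
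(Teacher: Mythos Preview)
Your approach is essentially the same as the paper's, and the strategy is correct. There is, however, one inaccuracy worth flagging: your formula $b_{i,k}=\delta_{q+1}^{1/2}\rho_i(t)\,a_k(\underline{R_i}\circ\mathbf{\Phi}_i)$ is missing a factor of $(\nabla\Phi_i)^{-1}$. Recall that $b_{i,k}=\mathbf{\Phi}_i^*\underline{b_{i,k}}$ is the pullback of a \emph{vector field}, so in Eulerian coordinates one has
\[
b_{i,k}=\delta_{q+1}^{1/2}\rho_i(t)\,(\nabla\Phi_i)^{-1}a_k(\underline{R_i}\circ\mathbf{\Phi}_i),
\]
as the paper uses explicitly. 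Consequently your expression for $D_{t,q}b_{i,k}$ omits the term $\delta_{q+1}^{1/2}\rho_i\,D_{t,q}((\nabla\Phi_i)^{-1})\,a_k(\underline{R_i}\circ\mathbf{\Phi}_i)$, and similarly for $c_{i,k}$. This does not damage the final estimates, since $D_{t,q}(\nabla\Phi_i)^{-1}=-(\nabla\Phi_i)^{-1}(D_{t,q}\nabla\Phi_i)(\nabla\Phi_i)^{-1}$ is controlled by (\ref{eq:per_est5}) and contributes only $\lambda_q\delta_{q+1}^{1/2}\lesssim(\epsilon_q\tau_q)^{-1}$; but the formula and the Leibniz expansion should be corrected. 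With that fix, your argument matches the paper's (the paper carries out (\ref{eq:per_est7})--(\ref{eq:per_est8}) via the Lagrangian identity $\partial_{\underline{t}}(w\circ\mathbf{X}_i)=(D_{t,q}w)\circ\mathbf{X}_i$, which amounts to the same Leibniz computation you describe).
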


\begin{proof}
The estimate (\ref{eq:per_est1}) is a standard consequence of (\ref{eq:phi_id}) and (\ref{eq:phi_id_2}).  Next, observe that (\ref{eq:per_est1}) and (\ref{eq:glue3}) imply
(\ref{eq:per_est2}):
\eqn{
\left\Vert \underline{R_{i}}\circ\mathbf{\Phi}_{i}\right\Vert _{\jmath+\kappa}&\lesssim_{\kappa}\left\Vert \nabla\Phi_{i}\right\Vert _{0}^{2}\left\Vert \textrm{Id}-\frac{\overline{R}_{q}}{\delta_{q+1}}\right\Vert _{\jmath+\kappa}+\left\Vert \nabla\Phi_{i}\right\Vert _{\jmath+\kappa}\left\Vert \nabla\Phi_{i}\right\Vert _{0}\left\Vert \textrm{Id}-\frac{\overline{R}_{q}}{\delta_{q+1}}\right\Vert _{0}\\
&\lesssim\ell^{-\kappa}\lambda_{q}^{\jmath}.
}
To prove (\ref{eq:per_est3}), we use (\ref{eq:per_est2}) along with the rapid decay of $a_k$ and its derivatives to find
\begin{align*}
\left\Vert b_{i,k}\right\Vert _{\jmath+\kappa} & =\left\Vert \delta_{q+1}^{\frac{1}{2}}\rho_{i}(t)\nabla\Phi_{i}^{-1}a_{k}\left(\underline{R_{i}}\circ\mathbf{\Phi}_{i}\right)\right\Vert _{\jmath+\kappa}\\
 & \lesssim\delta_{q+1}^{\frac{1}{2}}\left(\left\Vert \nabla\Phi_{i}^{-1}\right\Vert _{\jmath+\kappa}\left\Vert a_{k}\left(\underline{R_{i}}\circ\mathbf{\Phi}_{i}\right)\right\Vert _{0}+\left\Vert \nabla\Phi_{i}^{-1}\right\Vert _{0}\left\Vert a_{k}\left(\underline{R_{i}}\circ\mathbf{\Phi}_{i}\right)\right\Vert _{\jmath+\kappa}\right)\\
 & \lesssim\delta_{q+1}^{\frac{1}{2}}\ell^{-\kappa}\lambda_{q}^{\jmath}\left|k\right|^{-2d}.
\end{align*}

Similarly we obtain the estimate (\ref{eq:per_est4}):
\begin{align*}
\left\Vert c_{i,k}\right\Vert _{\jmath+\kappa} & =\left\Vert \delta_{q+1}^{1/2}\rho_{i}(t)\nabla\Phi_{i}^{-1}\Div_{\underline{x}}\left(\frac{k\wedge a_{k}\left(\underline{R_{i}}\right)}{i2\pi\lambda_{q+1}\left|k\right|^{2}}\right)\circ\mathbf{\Phi}_{i}\right\Vert _{\jmath+\kappa}\\
 & \lesssim\delta_{q+1}^{\frac{1}{2}}\left|k\right|^{-1}\lambda_{q+1}^{-1}\Big(\left\Vert \nabla\Phi_{i}^{-1}\right\Vert _{\jmath+\kappa}\left\Vert \nabla\left(a_{k}\left(\underline{R_{i}}\right)\right)\circ\mathbf{\Phi}_{i}\right\Vert _{0}\\
 &\quad+\left\Vert \nabla\Phi_{i}^{-1}\right\Vert _{0}\left\Vert \nabla\left(a_{k}\left(\underline{R_{i}}\right)\right)\circ\mathbf{\Phi}_{i}\right\Vert z_{\jmath+\kappa}\Big)\\
 & \lesssim\frac{\lambda_{q}}{\lambda_{q+1}}\delta_{q+1}^{\frac{1}{2}}\ell^{-\kappa}\lambda_{q}^{\jmath}\left|k\right|^{-2d}
\end{align*}
having used the chain rule 
\begin{equation}
\nabla\left(a_{k}\left(\underline{R_{i}}\right)\right)\circ\mathbf{\Phi}_{i}=\nabla\left(a_{k}\left(\underline{R_{i}}\circ\mathbf{\Phi}_{i}\right)\right)\left(\nabla\Phi_{i}\right)^{-1}\label{eq:chain_r}.
\end{equation}
Next, we compute
\begin{align*}
\left\Vert D_{t,q}\nabla\Phi_{i}\right\Vert _{\jmath+\kappa} & =\left\Vert \nabla_{\overline{v}_{q}}\left(\nabla\Phi_{i}\right)+\nabla\partial_{t}\Phi_{i}\right\Vert _{\jmath+\kappa}=\left\Vert \left[\nabla_{\overline{v}_{q}},\nabla\right]\Phi_{i}\right\Vert _{\jmath+\kappa}\\
 & \lesssim\left\Vert \nabla\overline{v}_{q}\right\Vert _{\jmath+\kappa}\left\Vert \nabla\Phi_{i}\right\Vert _{0}+\left\Vert \nabla\overline{v}_{q}\right\Vert _{0}\left\Vert \nabla\Phi_{i}\right\Vert _{\jmath+\kappa}\lesssim\lambda_{q}\delta_{q+1}^{\frac{1}{2}}\ell^{-\kappa}\lambda_{q}^{\jmath}
\end{align*}
which proves (\ref{eq:per_est5}), where we have used (\ref{eq:glue2}).

Then (\ref{eq:per_est5}), (\ref{eq:R_phiii}), (\ref{eq:glue3}),
and (\ref{eq:glue4}) imply (\ref{eq:per_est6}), via the estimate
\begin{align*}
&\left\Vert D_{t,q}\left(\underline{R_{i}}\circ\mathbf{\Phi}_{i}\right)\right\Vert _{\jmath+\kappa} \\
&\hspace{0.2in} \lesssim\left\Vert D_{t,q}\left(\nabla\Phi_{i}\right)\left(\textrm{Id}-\frac{\overline{R}_{q}}{\delta_{q+1}}\right)\nabla\Phi_{i}^{T}+\nabla\Phi_{i}\left(\textrm{Id}-\frac{\overline{R}_{q}}{\delta_{q+1}}\right)D_{t,q}\nabla\Phi_{i}^{T}\right\Vert _{\jmath+\kappa}\\
&\hspace{0.4in} +\delta_{q+1}^{-1}\left\Vert \nabla\Phi_{i}\left(D_{t,q}\overline{R}_{q}\right)\nabla\Phi_{i}^{T}\right\Vert _{\jmath+\kappa}\\
&\hspace{0.2in} \lesssim\lambda_{q}\delta_{q+1}^{\frac{1}{2}}\ell^{-\kappa}\lambda_{q}^{\jmath}+\left(\epsilon_{q}\tau_{q}\right)^{-1}\ell^{-\kappa}\lambda_{q}^{\jmath}\lesssim\left(\epsilon_{q}\tau_{q}\right)^{-1}\ell^{-\kappa}\lambda_{q}^{\jmath}.
\end{align*}
We recall the identities 
\begin{align*}
\partial_{\underline{t}}\left(w\circ\mathbf{X}_{i}\right) & =\left(D_{t,q}w\right)\circ\mathbf{X}_{i}\\
\partial_{\underline{t}}\underline{w}\circ\mathbf{\Phi}_{i} & =D_{t,q}\left(\underline{w}\circ\mathbf{\Phi}_{i}\right)
\end{align*}
for any tensors $w$, $\underline{w}$.  We then use (\ref{eq:per_est6}), (\ref{eq:per_est2}), and (\ref{eq:per_est1}) to prove (\ref{eq:per_est7}), via the bounds
\begin{align*}
\left\Vert D_{t,q}b_{i,k}\right\Vert _{\jmath+\kappa}&=\left\Vert \partial_{\underline{t}}\left(\mathbf{\Phi}_{i}^{*}\underline{b_{i,k}}\circ\mathbf{X}_{i}\right)\circ\mathbf{\Phi}_{i}\right\Vert _{\jmath+\kappa}\\
&=\left\Vert \partial_{\underline{t}}\left(\left(\nabla X_{i}\right)\underline{b_{i,k}}\right)\circ\mathbf{\Phi}_{i}\right\Vert _{\jmath+\kappa}\\
&= \delta_{q+1}^{1/2}\left\Vert \partial_{\underline{t}}\left(\left(\nabla X_{i}\right)\rho_{i}\left(\underline{t}\right)a_{k}\left(\underline{R_{i}}\right)\right)\circ\mathbf{\Phi}_{i}\right\Vert _{\jmath+\kappa}\\
&\lesssim \delta_{q+1}^{1/2}\left(\epsilon_{q}\tau_{q}\right)^{-1}\left\Vert \left(\left(\nabla X_{i}\right)a_{k}\left(\underline{R_{i}}\right)\right)\circ\mathbf{\Phi}_{i}\right\Vert _{\jmath+\kappa}\\
&\hspace{0.2in}+\delta_{q+1}^{1/2}\left\Vert \partial_{\underline{t}}\left(\left(\nabla X_{i}\right)a_{k}\left(\underline{R_{i}}\right)\right)\circ\mathbf{\Phi}_{i}\right\Vert _{\jmath+\kappa}\\
&\lesssim \delta_{q+1}^{1/2}\left(\epsilon_{q}\tau_{q}\right)^{-1}\left\Vert \left(\nabla\Phi_{i}\right)^{-1}a_{k}\left(\underline{R_{i}}\circ\mathbf{\Phi}_{i}\right)\right\Vert _{\jmath+\kappa}\\
&\hspace{0.2in}+\delta_{q+1}^{1/2}\left\Vert \left(\nabla\left(\overline{v}_{q}\circ\mathbf{X}_{i}\right)a_{k}\left(\underline{R_{i}}\right)\right)\circ\mathbf{\Phi}_{i}\right\Vert _{\jmath+\kappa}\\
&\hspace{0.2in} +\delta_{q+1}^{1/2}\left\Vert \left(\nabla X_{i}\right)\left(\nabla a_{k}\left(\underline{R_{i}}\right)\partial_{\underline{t}}\left(\underline{R_{i}}\right)\right)\circ\mathbf{\Phi}_{i}\right\Vert _{\jmath+\kappa}\\
&\lesssim \left(\epsilon_{q}\tau_{q}\right)^{-1}\delta_{q+1}^{1/2}\ell^{-\kappa}\lambda_{q}^{\jmath}\left|k\right|^{-2d}\\
&\hspace{0.2in}+\delta_{q+1}^{1/2}\left\Vert \left(\nabla\overline{v}_{q}\right)\left(\nabla\Phi_{i}\right)^{-1}a_{k}\left(\underline{R_{i}}\circ\mathbf{\Phi}_{i}\right)\right\Vert _{\jmath+\kappa}\\
&\hspace{0.2in} +\delta_{q+1}^{1/2}\left\Vert \left(\nabla\Phi_{i}\right)^{-1}\left(\nabla a_{k}\left(\underline{R_{i}}\circ\mathbf{\Phi}_{i}\right)D_{t,q}\left(\underline{R_{i}}\circ\mathbf{\Phi}_{i}\right)\right)\right\Vert _{\jmath+\kappa}\\
&\lesssim \left(\epsilon_{q}\tau_{q}\right)^{-1}\delta_{q+1}^{1/2}\ell^{-\kappa}\lambda_{q}^{\jmath}\left|k\right|^{-2d}\\
&\hspace{0.2in}+\delta_{q+1}^{1/2}\lambda_{q}\delta_{q+1}^{\frac{1}{2}}\ell^{-\kappa}\lambda_{q}^{\jmath}\left|k\right|^{-2d}\\
&\hspace{0.2in}+\left(\epsilon_{q}\tau_{q}\right)^{-1}\delta_{q+1}^{1/2}\ell^{-\kappa}\lambda_{q}^{\jmath}\left|k\right|^{-2d}\\
&\lesssim \left(\epsilon_{q}\tau_{q}\right)^{-1}\delta_{q+1}^{1/2}\ell^{-\kappa}\lambda_{q}^{\jmath}\left|k\right|^{-2d}.
\end{align*}

Finally, once again using (\ref{eq:chain_r}), and letting the symbol $*$ again denote an arbitrary tensor contraction, we estimate
\begin{align*}
&\left\Vert D_{t,q}c_{i,k}\right\Vert _{N}\\
&\hspace{0.2in}\ls \delta_{q+1}^{1/2}\lambda_{q+1}^{-1}\left|k\right|^{-1}\left\Vert \partial_{\underline{t}}\left(\rho_{i}(\underline{t})\left(\nabla X_{i}\right)*\nabla\left(a_{k}\left(\underline{R_{i}}\right)\right)\right)\circ\mathbf{\Phi}_{i}\right\Vert _{\jmath+\kappa}\\
&\hspace{0.2in}\lesssim \delta_{q+1}^{1/2}\left(\epsilon_{q}\tau_{q}\right)^{-1}\lambda_{q+1}^{-1}\left|k\right|^{-1}\left\Vert \left(\nabla\Phi_{i}\right)^{-1}*\nabla\left(a_{k}\left(\underline{R_{i}}\circ\mathbf{\Phi}_{i}\right)\right)*\nabla\Phi_{i}^{-1}\right\Vert _{\jmath+\kappa}\\
&\hspace{0.4in} +\delta_{q+1}^{1/2}\lambda_{q+1}^{-1}\left|k\right|^{-1}\left\Vert \nabla\overline{v}_{q}*\nabla\Phi_{i}^{-1}*\nabla\left(a_{k}\left(\underline{R_{i}}\circ\mathbf{\Phi}_{i}\right)\right)*\nabla\Phi_{i}^{-1}\right\Vert _{\jmath+\kappa}\\
&\hspace{0.4in} +\delta_{q+1}^{1/2}\lambda_{q+1}^{-1}\left|k\right|^{-1}\left\Vert \left(\nabla\Phi_{i}\right)^{-1}*\nabla\left(\nabla a_{k}\left(\underline{R_{i}}\circ\mathbf{\Phi}_{i}\right)D_{t,q}\left(\underline{R_{i}}\circ\mathbf{\Phi}_{i}\right)\right)*\nabla\Phi_{i}^{-1}\right\Vert _{\jmath+\kappa}\\
&\hspace{0.2in}\lesssim \left(\epsilon_{q}\tau_{q}\right)^{-1}\frac{\lambda_{q}}{\lambda_{q+1}}\delta_{q+1}^{1/2}\ell^{-\kappa}\lambda_{q}^{\jmath}\left|k\right|^{-2d}+\left(\lambda_{q}\delta_{q+1}^{\frac{1}{2}}\right)\frac{\lambda_{q}}{\lambda_{q+1}}\delta_{q+1}^{1/2}\ell^{-\kappa}\lambda_{q}^{\jmath}\left|k\right|^{-2d}\\
&\hspace{0.2in}\lesssim \left(\epsilon_{q}\tau_{q}\right)^{-1}\frac{\lambda_{q}}{\lambda_{q+1}}\delta_{q+1}^{1/2}\ell^{-\kappa}\lambda_{q}^{\jmath}\left|k\right|^{-2d},
\end{align*}
which establishes (\ref{eq:per_est8}).
\end{proof}

\begin{cor}
\label{cor:There-is-} There is a universal geometric constant $M=M(d)>1$
(not depending on $a,\beta,b,\sigma,\alpha,q$) such that for any
$0\leq \jmath\leq 12$, 
\begin{align}
\|w^{(c)}\|_{\jmath} & \ls_{d,\neg q}\frac{\lambda_{q}}{\lambda_{q+1}}\lambda_{q+1}^{\jmath}\delta_{q+1}^{1/2}\label{eq:cor1}\\
\|w^{(o)}\|_{\jmath} & \leq\frac{M}{4}\lambda_{q+1}^{\jmath}\delta_{q+1}^{1/2}\label{eq:cor2}\\
\|w_{q+1}\|_{\jmath} & \leq\frac{M}{2}\lambda_{q+1}^{\jmath}\delta_{q+1}^{1/2}\label{eq:cor3}.
\end{align}
\end{cor}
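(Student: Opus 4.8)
The plan is to estimate the two oscillatory pieces $w^{(o)}$ and $w^{(c)}$ mode by mode in their Fourier representations
\[
w^{(o)}=\sum_{i}\sum_{k\neq0}b_{i,k}\,e^{i2\pi\langle\lambda_{q+1}k,\Phi_{i}\rangle},\qquad w^{(c)}=\sum_{i}\sum_{k\neq0}c_{i,k}\,e^{i2\pi\langle\lambda_{q+1}k,\Phi_{i}\rangle},
\]
and then to sum the resulting bounds over $k$ and over $i$. The only tool needed is the standard phase-function estimate coming from the Leibniz rule together with the chain rule for higher-order derivatives: differentiating a single mode $a\,e^{i2\pi\langle\lambda_{q+1}k,\Phi_{i}\rangle}$ a total of $\jmath$ times produces $e^{i2\pi\langle\lambda_{q+1}k,\Phi_{i}\rangle}$ times a finite sum of terms, each of which is a derivative $\nabla^{r}a$ multiplied by a product of factors of the form $\lambda_{q+1}\langle k,\nabla^{s}\Phi_{i}\rangle$ of total order $\jmath-r$, and each such factor is bounded by $\lambda_{q+1}|k|\,\|\nabla\Phi_{i}\|_{s-1}$.

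All the ingredients are contained in \Propref{pertub_est}. From (\ref{eq:per_est1}), together with $\ell_{q}^{-1}=\lambda_{q}^{1/4}\lambda_{q+1}^{3/4}\le\lambda_{q+1}$ (by (\ref{eq:length_scale})) and $\lambda_{q}\le\lambda_{q+1}$, one obtains $\|\nabla\Phi_{i}\|_{r}\lesssim_{d}\lambda_{q+1}^{r}$ for $0\le r\le12$, in particular $\|\nabla\Phi_{i}\|_{0}\lesssim_{d}1$. Likewise (\ref{eq:per_est3}) and (\ref{eq:per_est4}) give, in the same range of $r$, the bounds $\|b_{i,k}\|_{r}\lesssim_{d}\delta_{q+1}^{1/2}\lambda_{q+1}^{r}|k|^{-N_{0}}$ and $\|c_{i,k}\|_{r}\lesssim_{d}\tfrac{\lambda_{q}}{\lambda_{q+1}}\delta_{q+1}^{1/2}\lambda_{q+1}^{r}|k|^{-N_{0}}$; here we use that the decay exponent in (\ref{eq:per_est3})--(\ref{eq:per_est4}) may be taken as large as we wish (say $N_{0}=2d+13$) at the cost of enlarging the implied constant, since the Mikado coefficients decay faster than any polynomial. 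Inserting these into the phase-function estimate gives, for a single mode,
\[
\|\nabla^{\jmath}\big(b_{i,k}\,e^{i2\pi\langle\lambda_{q+1}k,\Phi_{i}\rangle}\big)\|_{0}\lesssim_{d}\delta_{q+1}^{1/2}\,\lambda_{q+1}^{\jmath}\,|k|^{\,\jmath-N_{0}}
\]
for $0\le\jmath\le12$, and the same with $b_{i,k}$ replaced by $c_{i,k}$ and an extra factor $\lambda_{q}/\lambda_{q+1}$. Since $N_{0}-\jmath>d$ the series $\sum_{k\neq0}|k|^{\jmath-N_{0}}$ converges to a constant depending only on $d$, and since the cutoffs $\rho_{i}$ have pairwise disjoint supports (as $\supp\rho_{i}\subset[t_{i}-\epsilon_{q}\tau_{q},\,t_{i}+2\epsilon_{q}\tau_{q}]$ while $t_{i+1}-t_{i}=\tau_{q}$ and $\epsilon_{q}\ll1$) at each fixed time at most one index $i$ contributes. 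Therefore $\|w^{(o)}\|_{\jmath}\le C_{0}(d)\,\lambda_{q+1}^{\jmath}\delta_{q+1}^{1/2}$ and $\|w^{(c)}\|_{\jmath}\le C_{1}(d)\,\tfrac{\lambda_{q}}{\lambda_{q+1}}\lambda_{q+1}^{\jmath}\delta_{q+1}^{1/2}$ for $0\le\jmath\le12$; the latter bound is precisely (\ref{eq:cor1}).

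To finish I would \emph{fix} the geometric constant $M=M(d)>1$ to be at least $\max\{2,\,4C_{0}(d)\}$ (and large enough to satisfy the remaining demands placed on $M$ later in this section); this turns the bound on $\|w^{(o)}\|_{\jmath}$ into (\ref{eq:cor2}). For (\ref{eq:cor3}), the triangle inequality gives $\|w_{q+1}\|_{\jmath}\le\tfrac{M}{4}\lambda_{q+1}^{\jmath}\delta_{q+1}^{1/2}+C_{1}(d)\tfrac{\lambda_{q}}{\lambda_{q+1}}\lambda_{q+1}^{\jmath}\delta_{q+1}^{1/2}$, and since $\lambda_{q}/\lambda_{q+1}\le 2a^{-(b-1)}$ can be made smaller than $M/(4C_{1}(d))$ by taking $a$ sufficiently large (depending only on $b$ and $d$), the right-hand side is $\le\tfrac{M}{2}\lambda_{q+1}^{\jmath}\delta_{q+1}^{1/2}$.

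The step I expect to be the main obstacle is not a single inequality but the bookkeeping that keeps all constants \emph{geometric}, i.e.\ dependent on $d$ alone: one must check that every derivative costs at most one power of $\lambda_{q+1}$, whether it lands on an amplitude (where the worst case is a factor $\ell_{q}^{-1}\le\lambda_{q+1}$) or on the phase (where it costs $\lambda_{q+1}|k|$ times a derivative of $\nabla\Phi_{i}$, still $\le\lambda_{q+1}$ per order by (\ref{eq:per_est1})), and that the implied constant involves only $d$ through the fixed Mikado profile $W$, the Calder\'on--Zygmund and transport constants underlying \Propref{pertub_est}, and the convergent sum $\sum_{k}|k|^{-N_{0}}$. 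This is exactly what allows $M$ to be chosen once and for all as a function of $d$, consistently with the inductive hypothesis and with \Propref{convex_int}. One could instead avoid the Fourier sum altogether and work in Lagrangian coordinates, where $\mathbf{X}_{i}^{*}w^{(o)}=\delta_{q+1}^{1/2}\rho_{i}(\underline t)\,W(\underline{R_{i}},\lambda_{q+1}\underline x)$ and all partial derivatives of $W$ are bounded on $\mathcal{N}\times\mathbb{T}^{d}$ by constants depending only on $d$ and their order; the bookkeeping is identical.
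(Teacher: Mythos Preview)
Your overall strategy (expand in Fourier modes, apply Leibniz/Fa\`a di Bruno, sum over $k$, use disjoint supports of the $\rho_i$) is the same as the paper's, but there is a genuine circularity in the way you determine $M$. You claim $\|b_{i,k}\|_r\lesssim_d\delta_{q+1}^{1/2}\lambda_{q+1}^r|k|^{-N_0}$ and $\|\nabla\Phi_i\|_r\lesssim_d\lambda_{q+1}^r$ with constants depending only on $d$, and then set $M\ge 4C_0(d)$. But for $r\ge1$ the implicit constants in \Propref{pertub_est} are \emph{not} purely geometric: they inherit the constants from (\ref{eq:glue2}) and (\ref{eq:glue3}), which involve $A_\kappa+B_\kappa$ (and hence $M$ itself) as well as $\alpha,\beta,b,\sigma$. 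Thus your $C_0$ depends on $M$, and $M\ge4C_0(M)$ is a fixed-point condition you cannot solve in general; even if you could, the resulting $M$ would depend on $A_\kappa$ and the other parameters, contradicting the statement that $M=M(d)$.

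The paper breaks this loop by isolating the one term whose constant is genuinely geometric. Only $\|b_{i,k}\|_0$ and $\|\nabla\Phi_i\|_0$ are controlled by constants depending solely on the Mikado profile $W$ (once $a$ is large enough to ensure $\|\nabla\Phi_i\|_0\le2$). For $\jmath\ge1$, the single term in the Leibniz/Fa\`a di Bruno expansion using only these quantities is $b_{i,k}\cdot\big(i2\pi\lambda_{q+1}\langle k,\nabla\Phi_i\rangle\big)^{\jmath}e^{i2\pi\langle\lambda_{q+1}k,\Phi_i\rangle}$, bounded by $\overline M(d)\,|k|^{\jmath-N_0}\delta_{q+1}^{1/2}\lambda_{q+1}^{\jmath}$; summing over $k$ determines $M=M(d)$. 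Every remaining term carries either a derivative of $b_{i,k}$ or a higher derivative of $\Phi_i$, trading a factor $\lambda_{q+1}$ for at most $\ell^{-1}$ (or $\lambda_q$). These terms are therefore smaller by a positive power of $\ell^{-1}/\lambda_{q+1}=(\lambda_q/\lambda_{q+1})^{1/4}$, and since $a$ is chosen \emph{after} $M$ and all other constants, they can be forced below $\tfrac{M}{100}\lambda_{q+1}^{\jmath}\delta_{q+1}^{1/2}$ regardless of the (non-geometric) constants they carry. Your last paragraph correctly identifies the bookkeeping as the crux, but the resolution is this main-term/remainder split, not a direct appeal to the constants in \Propref{pertub_est}.
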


\begin{proof}
By choosing $a$ sufficiently large, we can arrange that $\left\Vert \nabla\Phi_{i}\right\Vert _{0}\leq2$.  From the proof of (\ref{eq:per_est3}), we note that there are $M=M(d)$
and $\overline{M}=\overline{M}(d)$ (not depending on $a,\beta,b,\sigma,\alpha,q$)
such that 
\begin{align*}
\left\Vert b_{i,k}\right\Vert _{0}\leq & \overline{M}\left|k\right|^{-2d}\delta_{q+1}^{\frac{1}{2}}\\
\sum_{k\neq0}\left\Vert b_{i,k}\right\Vert _{0} & \leq\sum_{k\neq0}\overline{M}\left|k\right|^{-2d}\delta_{q+1}^{\frac{1}{2}}\leq\frac{M}{10}\delta_{q+1}^{\frac{1}{2}}.
\end{align*}
Indeed $M$ and $\overline{M}$ only depend on the choice of $W:\mathcal{N}\times\mathbb{T}^{d}\to\mathbb{R}^{d}$.  Thus (\ref{eq:cor2}) holds when $\jmath=0$.

For higher derivatives $\partial^{\theta}w^{(o)}$ where $\left|\theta\right|=\jmath\leq12$,
the only problematic term is
\[
\sum_{k\neq0}b_{i,k}\partial^{\theta}\left(e^{i2\pi\left\langle \lambda_{q+1}k,\Phi_{i}\right\rangle }\right)
\]
as it will yield $\sum_{k\neq0}\lambda_{q+1}^{\jmath}\overline{M}\left|k\right|^{-2d}\delta_{q+1}^{\frac{1}{2}}$
(enlarging $\overline{M}$ and $M$ if necessary).  All other terms
in $\partial^{\theta}w^{(o)}$ will involve a derivative of $b_{i,k}$,
which will yield $C(d)\ell^{-1}$, and we have $C(d)\ell^{-1}\ll\frac{M}{100}\lambda_{q+1}$
for large $a$.  Thus we can absorb all appearances of $C(d)$ by
increasing $a$.  The remaining inequalities are similarly immediate.
\end{proof}

This completes the proof of (\ref{eq:perturb_1}).  

\subsection{Proof of (\ref{eq:finalR}): Estimates on the new Reynolds stress}\label{errorestimates}

Once again, throughout this subsection, we assume $t\in\left[t_{i}-\epsilon_{q}\tau_{q},t_{i}+2\epsilon_{q}\tau_{q}\right]$.
To obtain (\ref{eq:finalR}), we need only to prove
\begin{equation}
\left\Vert \widetilde{R}_{q+1}\right\Vert _{\kappa+\alpha}\lesssim_{\kappa,d,\neg q}\epsilon_{q+1}\lambda_{q+1}^{\kappa-4\alpha}\delta_{q+2}.\label{eq:fin_stress_er}
\end{equation}
We use the following standard stationary phase lemma for the anti-divergence operator; see for instance
\cite[Proposition C.2]{buckmasterOnsagerConjectureAdmissible2017}.
\begin{lem}\label{stationaryphase}
For any $N\geq1$, vector field $u\in\mathfrak{X}\left(\mathbb{T}^{d}\right)$, and phase function
$\phi\in C^{\infty}\left(\mathbb{T}^{d}\to\mathbb{T}^{d}\right)$
such that $\frac{1}{2}\leq\left|\nabla\phi\right|\leq2$,
\begin{equation}
\left\Vert \mathcal{R}\left(u(x)e^{i2\pi\left\langle k,\phi\right\rangle }\right)\right\Vert _{\alpha}\lesssim_{N}\left|k\right|^{\alpha-1}\left\Vert u\right\Vert _{0}+\left|k\right|^{\alpha-N}\left(\left\Vert u\right\Vert _{0}\left\Vert \phi\right\Vert _{N+\alpha}+\left\Vert u\right\Vert _{N+\alpha}\right).\label{eq:antidiv_est}
\end{equation}
\end{lem}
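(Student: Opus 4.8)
The plan is to treat $\mathcal{R}$ as an order $-1$ Fourier multiplier and run the standard oscillatory-integral argument: factor out a principal symbol, generate a finite asymptotic expansion in negative powers of $|k|$, and estimate each piece in $C^{\alpha}$. First I would record that, by the explicit formula (\ref{eq:antidiv2}), $\mathcal{R}$ is a finite linear combination of constant-coefficient operators of the form $D\circ\Delta^{-M}$, where $M\in\{1,2\}$ and $D$ is a homogeneous differential operator of order $2M-1$; since $\Delta^{-1}$ annihilates constants, the harmless projection onto mean-zero functions inserted at each stage costs nothing, and by linearity it suffices to prove (\ref{eq:antidiv_est}) for one such model operator, applied componentwise to $u\,e^{i2\pi\langle k,\phi\rangle}$. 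Writing $\Phi:=\langle k,\phi\rangle$, so that $\nabla\Phi=(\nabla\phi)^{T}k$, the hypothesis $\tfrac12\le|\nabla\phi|\le2$ — understood, as in every application, to mean that the Jacobian sends any frequency $\xi$ to a vector of comparable length (e.g.\ via (\ref{eq:phi_id}), $\nabla\phi$ is close to $\mathrm{Id}$) — gives $|\nabla\Phi|\sim|k|$ pointwise on $\mathbb{T}^{d}$, and $[\nabla\Phi]_{N+\alpha}\lesssim|k|\,\|\phi\|_{N+1+\alpha}$.

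Next I would iterate the algebraic identity
\[
e^{-i2\pi\Phi}\,\Delta\bigl(b\,e^{i2\pi\Phi}\bigr)=\Delta b+4\pi i\,\nabla b\cdot\nabla\Phi+2\pi i\,(\Delta\Phi)\,b-4\pi^{2}|\nabla\Phi|^{2}\,b,
\]
which, solved for $b\,e^{i2\pi\Phi}$ by dividing through by the nonvanishing $|\nabla\Phi|^{2}$, writes $\Delta^{-1}(b\,e^{i2\pi\Phi})$ as $\tfrac{-1}{4\pi^{2}|\nabla\Phi|^{2}}\,b\,e^{i2\pi\Phi}$ plus $\Delta^{-1}$ applied to $e^{i2\pi\Phi}$ times an amplitude that is a factor $|k|^{-1}$ smaller (one extra derivative falls on $b$ or on $\Phi$, against $|\nabla\Phi|^{2}\sim|k|^{2}$ in the denominator). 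Applying this $M$ times to $\Delta^{-M}(u\,e^{i2\pi\Phi})$, and then $D$ — which, being of order $2M-1$, replaces $e^{i2\pi\Phi}$ by $e^{i2\pi\Phi}$ times symbols of $\nabla\Phi$ of size $\lesssim|k|^{2M-1}$ — rewrites the model operator applied to $u\,e^{i2\pi\Phi}$ as
\[
e^{i2\pi\Phi}\sum_{j=0}^{N-1}m_{j}(\nabla\Phi)\,P_{j}(u,\Phi)\;+\;E_{N},
\]
where $m_{0}$ is the principal symbol, homogeneous of degree $-1$ (so $|m_{0}(\nabla\Phi)|\sim|k|^{-1}$), each $m_{j}$ is smooth away from $0$ and homogeneous of degree $-1-j$, $P_{j}(u,\Phi)$ is a universal differential expression carrying $j$ derivatives distributed among $u$ and $\Phi$, and the remainder satisfies $\|E_{N}\|_{\alpha}\lesssim_{N}|k|^{\alpha-N}\bigl(\|u\|_{0}\|\phi\|_{N+\alpha}+\|u\|_{N+\alpha}\bigr)$ by the $C^{\alpha}$-boundedness of the Calder\'on--Zygmund operators that survive after the $\Delta^{-1}$'s are used up.

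Then I would estimate the terms of the sum. For the principal term, the product inequality $\|fg\|_{r}\lesssim\|f\|_{0}[g]_{r}+[f]_{r}\|g\|_{0}$ from Subsection~\ref{subsec:geo}, together with $\|e^{i2\pi\Phi}\|_{0}=1$, $[e^{i2\pi\Phi}]_{\alpha}\lesssim|k|^{\alpha}$ (using $\|\nabla\phi\|_{0}\lesssim1$), $\|m_{0}(\nabla\Phi)\|_{0}\sim|k|^{-1}$, and $[m_{0}(\nabla\Phi)]_{\alpha}\lesssim|k|^{-1}\|\phi\|_{1+\alpha}$ (chain rule, using that $\nabla m_{0}$ is homogeneous of degree $-2$), gives a bound $\lesssim|k|^{\alpha-1}\|u\|_{0}+|k|^{-1}\|u\|_{\alpha}$; interpolating $\|u\|_{\alpha}$ between $\|u\|_{0}$ and $\|u\|_{N+\alpha}$ and applying Young's inequality with the optimal weight absorbs the second summand into $|k|^{\alpha-1}\|u\|_{0}+|k|^{\alpha-N}\|u\|_{N+\alpha}$. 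Each lower term ($1\le j\le N-1$) carries the geometrically better weight $|k|^{\alpha-1-j}$ against at most $j$ extra derivatives on $u$ and $\phi$, so after the same interpolation it fits under the right-hand side of (\ref{eq:antidiv_est}); summing the geometric series and adding $E_{N}$ closes the estimate.

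The step I expect to be the main obstacle is the derivative bookkeeping in the iteration: one must check that no term of $\sum_{j}m_{j}P_{j}$ ever demands more than $N+\alpha$ derivatives on $u$ or on $\phi$, that the trailing fractional $\alpha$ of regularity is always supplied by the H\"older product and composition inequalities rather than by an extra differentiation, and that the mean-zero projections generated by each $\Delta^{-1}$ contribute only smooth, lower-order corrections. The precise asymmetric shape of the right-hand side of (\ref{eq:antidiv_est}) — with $\|u\|_{N+\alpha}$ unweighted but $\|\phi\|_{N+\alpha}$ weighted by $\|u\|_{0}$ — is exactly what dictates how derivatives must be apportioned within each $P_{j}$; once that apportionment is fixed, everything else is routine. (One may alternatively argue by Littlewood--Paley: decomposing $u=\sum_{p}u_{p}$, the product $u_{p}e^{i2\pi\Phi}$ is essentially localized at frequency $\max(2^{p},|k|)$, on which the model operator acts as a bounded multiple of that frequency to the power $-1$; but the iteration above tracks the $\|\phi\|_{N+\alpha}$ contribution more transparently.)
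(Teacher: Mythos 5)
The paper does not prove this lemma at all: it is quoted as a known result, with a pointer to \cite[Proposition C.2]{buckmasterOnsagerConjectureAdmissible2017} (which in turn rests on the Daneri--Sz\'ekelyhidi stationary phase estimates). So there is no in-paper argument to match; what you have written is a genuine proof sketch where the paper offers only a citation. Your route --- reduce via (\ref{eq:antidiv2}) to model operators $D\circ\Delta^{-M}$, then iterate the conjugation identity for $\Delta$ to produce an asymptotic expansion in powers of $|k|^{-1}$ with explicit amplitudes plus a remainder --- is the ``parametrix'' proof in the style of Isett's inverse-divergence construction, whereas the cited proof is closer to a kernel-splitting/integration-by-parts argument (split the kernel of the order $-1$ operator at scale $|k|^{-1}$, bound the near part by the kernel's size to get $|k|^{\alpha-1}\|u\|_0$, and integrate by parts $N$ times in the far part using $L=\tfrac{(\nabla\Phi)^{T}k}{i|(\nabla\Phi)^{T}k|^{2}}\cdot\nabla$ to get the $|k|^{\alpha-N}$ tail). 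The expansion approach buys an explicit principal symbol and transparent tracking of where derivatives of $u$ and $\phi$ land; the kernel approach avoids the combinatorics of the iterated expansion. Your verification of the principal term, the interpolation/Young step absorbing $|k|^{-1}\|u\|_{\alpha}$, and the interpretation of the hypothesis $\tfrac12\le|\nabla\phi|\le2$ as quantitative invertibility of the Jacobian (so that $|\nabla\Phi|\sim|k|$ pointwise) are all correct and are exactly the points that matter.

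One detail in your remainder estimate deserves more care than the phrase ``$C^{\alpha}$-boundedness of the Calder\'on--Zygmund operators that survive'' suggests. For the $\Delta^{-2}\partial^{3}$ component of $\mathcal{R}$, if you expand the inner $\Delta^{-1}$ and leave a remainder of the form $\partial^{3}\Delta^{-1}(E\,e^{i2\pi\Phi})$, the surviving operator $\partial^{3}\Delta^{-1}$ has order $+1$ and is \emph{not} bounded on $C^{\alpha}$; applying it costs either a factor $|k|$ or one extra derivative of $E$. The fix is standard --- either keep the remainder in the form $(\text{order}\le 0\text{ operator})(\tilde E\,e^{i2\pi\Phi})$ by writing $\partial^{3}\Delta^{-1}=(\partial^{2}\Delta^{-1})\circ\partial$ and commuting the extra derivative onto the amplitude, or run the iteration one step deeper --- but it must be done consistently with the cap of $N+\alpha$ derivatives on $u$ and $\phi$. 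This is exactly the bookkeeping obstacle you flag yourself, so I regard it as a gap in detail rather than in strategy.
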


The error terms in (\ref{eq:antidiv_est}) can be suppressed by choosing $N$ sufficiently large (independently of
$q$).  In particular, we can arrange that
\begin{equation}
\ell_{q}^{N+100\alpha}\lambda_{q+1}^{N-1-100\alpha}>1,\label{eq:trick2}
\end{equation}
as long as
\[
-\frac{1}{4}-\frac{3}{4}b+b\left(\frac{N-1-100\alpha}{N+100\alpha}\right)>0
\]
which is true when $N=N\left(b,\beta,\sigma,\alpha\right)$ is large
enough.  Unless otherwise noted, we will be using such a choice of $N$.

Let us record that for any $\kappa\geq0,$ there is a trivial estimate
\begin{equation}
\left\Vert e^{i2\pi\left\langle \lambda_{q+1}k,\Phi_{i}\right\rangle }\right\Vert _{\kappa}\ls_{\kappa}\lambda_{q+1}^{\kappa}.\label{eq:phase_Ck}
\end{equation}

\subsubsection{Nash error}

By using (\ref{eq:antidiv_est}) and \Propref{pertub_est}, we have
\begin{align*}
\left\Vert \mathcal{R}\left(w^{(o)}\cdot\nabla\overline{v}_{q}\right)\right\Vert _{\alpha} & \lesssim\sum_{k\neq0}\left\Vert \mathcal{R}\left(b_{i,k}\cdot\nabla\overline{v}_{q}e^{i2\pi\left\langle \lambda_{q+1}k,\Phi_{i}\right\rangle }\right)\right\Vert _{\alpha}\\
 & \lesssim_{N}\sum_{k\neq0}\left|\lambda_{q+1}k\right|^{\alpha-1}\left|k\right|^{-2d}\delta_{q+1}^{\frac{1}{2}}\left(\lambda_{q}\delta_{q+1}^{\frac{1}{2}}\right)\lambda_{q+1}^{3\alpha}\\
 & \quad+\left|\lambda_{q+1}k\right|^{\alpha-N}\left|k\right|^{-2d}\left(\delta_{q+1}^{\frac{1}{2}}\lambda_{q}\delta_{q+1}^{\frac{1}{2}}\ell^{-N-2\alpha}\right)\\
 & \lesssim\lambda_{q+1}^{4\alpha}\frac{\lambda_{q}}{\lambda_{q+1}}\delta_{q+1}\lesssim\epsilon_{q+1}\delta_{q+2}\lambda_{q+1}^{-4\alpha}
\end{align*}
where we used (\ref{eq:trick2}) to pass to the last line, and (\ref{eq:stress_size_ind1})
in the last inequality.  In addition, for any $\kappa\geq1$,
\begin{align*}
\left\Vert \mathcal{R}\left(w^{(o)}\cdot\nabla\overline{v}_{q}\right)\right\Vert _{\kappa+\alpha} & \ls_{\kappa}\sum_{k\neq0}\left\Vert b_{i,k}\cdot\nabla\overline{v}_{q}e^{i2\pi\left\langle \lambda_{q+1}k,\Phi_{i}\right\rangle }\right\Vert _{\kappa-1+\alpha}\\
&\ls\lambda_{q+1}^{\kappa-1+3\alpha}\delta_{q+1}^{\frac{1}{2}}\left(\lambda_{q}\delta_{q+1}^{\frac{1}{2}}\right)\\
&=\lambda_{q+1}^{\kappa+3\alpha}\frac{\lambda_{q}}{\lambda_{q+1}}\delta_{q+1}\\
&\ls\epsilon_{q+1}\lambda_{q+1}^{\kappa-4\alpha}\delta_{q+2}
\end{align*}
where (\ref{eq:stress_size_ind1}) is used again in the last inequality.  Similarly, for any $\kappa\geq1$,
\begin{align*}
\left\Vert \mathcal{R}\left(w^{(c)}\cdot\nabla\overline{v}_{q}\right)\right\Vert _{\alpha} & \lesssim\sum_{k\neq0}\left\Vert \mathcal{R}\left(c_{i,k}\cdot\nabla\overline{v}_{q}e^{i2\pi\left\langle \lambda_{q+1}k,\Phi_{i}\right\rangle }\right)\right\Vert _{\alpha}\\
&\lesssim\frac{\lambda_{q}}{\lambda_{q+1}}\epsilon_{q+1}\delta_{q+2}\lambda_{q+1}^{-4\alpha}
\end{align*}
and
\begin{align*}
\left\Vert \mathcal{R}\left(w^{(c)}\cdot\nabla\overline{v}_{q}\right)\right\Vert _{\kappa+\alpha} & \ls\sum_{k\neq0}\left\Vert c_{i,k}\cdot\nabla\overline{v}_{q}e^{i2\pi\left\langle \lambda_{q+1}k,\Phi_{i}\right\rangle }\right\Vert _{\kappa-1+\alpha}\\
&\ls\frac{\lambda_{q}}{\lambda_{q+1}}\epsilon_{q+1}\lambda_{q+1}^{\kappa-4\alpha}\delta_{q+2}
\end{align*}
which is an improvement by the small factor  $\frac{\lambda_{q}}{\lambda_{q+1}}$.  Thus we have 
\[
\left\Vert R_{\mathrm{Nash}}\right\Vert _{\kappa+\alpha}\lesssim_{\kappa}\epsilon_{q+1}\lambda_{q+1}^{\kappa-4\alpha}\delta_{q+2}
\]
for any $\kappa\geq0$.

\subsubsection{Transport error}

By construction of $\Phi$ (see (\ref{phi}) and the surrounding discussion), we have the key identity $D_{t,q}\left(e^{i2\pi\left\langle \lambda_{q+1}k,\Phi_{i}\right\rangle }\right)=0$ through which we can avoid factors of $\lambda_{q+1}$ in the estimates.

Compared to $\mathcal{R}\left(w^{(o)}\cdot\nabla\overline{v}_{q}\right)$,
the estimates of $\mathcal{R}D_{t,q}w^{(o)}$ will have an extra factor
$\epsilon_{q}^{-1}\lambda_{q}^{3\alpha}$, as $\nabla\overline{v}_{q}$
costs $\lambda_{q}\delta_{q+1}^{\frac{1}{2}}$ while $D_{t,q}$ costs
$\epsilon_{q}^{-1}\tau_{q}^{-1}$.  Otherwise the calculations are identical and we have, using (\ref{eq:stress_size_ind1}),
\begin{align*}
\left\Vert \mathcal{R}D_{t,q}w^{(o)}\right\Vert _{\alpha} & \lesssim\lambda_{q+1}^{7\alpha}\epsilon_{q}^{-1}\frac{\lambda_{q}}{\lambda_{q+1}}\delta_{q+1}\lesssim\epsilon_{q+1}\lambda_{q+1}^{-4\alpha}\delta_{q+2},\\
\left\Vert \mathcal{R}D_{t,q}w^{(o)}\right\Vert _{\kappa+\alpha} & \ls\lambda_{q+1}^{\kappa+6\alpha}\epsilon_{q}^{-1}\frac{\lambda_{q}}{\lambda_{q+1}}\delta_{q+1}\ls\epsilon_{q+1}\lambda_{q+1}^{\kappa-4\alpha}\delta_{q+2},\\
\left\Vert \mathcal{R}D_{t,q}w^{(c)}\right\Vert _{\alpha} & \lesssim\frac{\lambda_{q}}{\lambda_{q+1}}\epsilon_{q+1}\lambda_{q+1}^{-4\alpha}\delta_{q+2},\\
\end{align*}
and
\begin{align*}
\left\Vert \mathcal{R}D_{t,q}w^{(c)}\right\Vert _{\kappa+\alpha} & \lesssim\frac{\lambda_{q}}{\lambda_{q+1}}\epsilon_{q+1}\lambda_{q+1}^{\kappa-4\alpha}\delta_{q+2}
\end{align*}
for $\kappa\geq1$.  Thus we have 
\[
\left\Vert R_{\mathrm{trans}}\right\Vert _{\kappa+\alpha}\lesssim_{\kappa}\epsilon_{q+1}\lambda_{q+1}^{\kappa-4\alpha}\delta_{q+2}
\]
for any $\kappa\geq0$.

\subsubsection{Oscillation error}

We employ the decomposition
\begin{align*}
R_{\mathrm{osc}}=\mathcal{R}\Div\left(\overline{R}_{q}+w_{q+1}\otimes w_{q+1}\right)=\mathcal O_1+\mathcal O_2
\end{align*}
where
\eqn{
\mathcal O_1&\,\coloneqq\,\mathcal{R}\Div\left(\overline{R}_{q}+w^{(o)}\otimes w^{(o)}\right),\\
\mathcal O_2&\,\coloneqq\,\mathcal{R}\Div\left(w^{(c)}\otimes w^{(o)}+w^{(o)}\otimes w^{(c)}+w^{(c)}\otimes w^{(c)}\right).
}
Using that $\mathcal{R}\Div$
is a Calderón-Zygmund operator along with the bounds in \Corref{There-is-}, for any $\kappa\geq0$,
\begin{align*}
\left\Vert \mathcal{O}_{2}\right\Vert _{\kappa+\alpha} & \lesssim\left\Vert w^{(c)}\right\Vert _{\kappa+\alpha}\left\Vert w^{(o)}\right\Vert _{\alpha}+\left\Vert w^{(c)}\right\Vert _{\alpha}\left\Vert w^{(o)}\right\Vert _{\kappa+\alpha}+\left\Vert w^{(c)}\right\Vert _{\kappa+\alpha}\left\Vert w^{(c)}\right\Vert _{\alpha}\\
 & \lesssim\frac{\lambda_{q}}{\lambda_{q+1}}\lambda_{q+1}^{\kappa+2\alpha}\delta_{q+1}\ls\epsilon_{q+1}\lambda_{q+1}^{\kappa-4\alpha}\delta_{q+2}
\end{align*}
where we once again use the parameter relations (\ref{eq:stress_size_ind1}).  Next, one computes using (\ref{eq:divf1}) that
we have 
\begin{align*}
\mathcal{O}_{1} & =\sum_{k\in\mathbb{Z}^{d}\backslash\{0\}}\delta_{q+1}\rho_{i}^{2}\mathcal{R}\left(\Div\left(\mathbf{\Phi}_{i}^{*}\left(C_{k}\left(\underline{R_{i}}\right)\right)\right)e^{i2\pi\left\langle \lambda_{q+1}k,\Phi_{i}\right\rangle }\right),
\end{align*}
see Section 5.3.3 in \cite{bulutEpochsRegularityWild2022} for the detailed calculation.  Then (\ref{eq:antidiv_est})
and (\ref{eq:stress_size_ind1}) allow us to estimate
\begin{align*}
\left\Vert \mathcal{O}_{1}\right\Vert _{\alpha} & \lesssim\sum_{k\in\mathbb{Z}^{d}\backslash\{0\}}\left\Vert \delta_{q+1}\mathcal{R}\left(\Div\left(\nabla\Phi_{i}^{-1}C_{k}\left(\underline{R_{i}}\circ\mathbf{\Phi}_{i}\right)\nabla\Phi_{i}^{-T}\right)e^{i2\pi\left\langle \lambda_{q+1}k,\Phi_{i}\right\rangle }\right)\right\Vert _{\alpha}\\
 & \lesssim_{N}\sum_{k\neq0}\left|\lambda_{q+1}k\right|^{\alpha-1}\left|k\right|^{-2d}\lambda_{q}\delta_{q+1}+\left|\lambda_{q+1}k\right|^{\alpha-N}\left|k\right|^{-2d}\left(\lambda_{q}\delta_{q+1}\right)\ell^{-N-4\alpha}\\
 & \lesssim\lambda_{q+1}^{\alpha}\frac{\lambda_{q}}{\lambda_{q+1}}\delta_{q+1}\ls\epsilon_{q+1}\lambda_{q+1}^{-4\alpha}\delta_{q+2}
\end{align*}
and similarly, for any $\kappa\geq1$,
\begin{align*}
\left\Vert \mathcal{O}_{1}\right\Vert _{\kappa+\alpha} & \ls\delta_{q+1}\left\Vert \Div\left(\nabla\Phi_{i}^{-1}C_{k}\left(\underline{R_{i}}\circ\mathbf{\Phi}_{i}\right)\nabla\Phi_{i}^{-T}\right)e^{i2\pi\left\langle \lambda_{q+1}k,\Phi_{i}\right\rangle }\right\Vert _{\kappa-1+\alpha}\\
 & \ls\delta_{q+1}\lambda_{q+1}^{\kappa+4\alpha}\frac{\lambda_{q}}{\lambda_{q+1}}\ls\epsilon_{q+1}\lambda_{q+1}^{\kappa-4\alpha}\delta_{q+2}.
\end{align*}
We conclude that for any $\kappa\geq0$,
\[
\left\Vert R_{\mathrm{osc}}\right\Vert _{\kappa+\alpha}\lesssim_{\kappa}\epsilon_{q+1}\lambda_{q+1}^{\kappa-4\alpha}\delta_{q+2}
\]
which completes the proof of (\ref{eq:fin_stress_er}).

We have thus completed the proof of \Propref{convex_int}.

\section{Conclusion of the proof of Proposition \ref{prop:iterative}}
\label{sec:proofProp2}

In this section, we complete the proof of \Propref{iterative}.

\begin{proof}[Proof of Proposition \ref{prop:iterative}]
Note that 
\begin{equation}
\begin{cases}
\dd_{t}\overline{u}_{q}+\divop\overline{u}_{q}\otimes\overline{u}_{q}+\grad\overline{\pi}_{q}=\divop\overline{F}_{q}\\
\dd_{t}v_{q+1}+\divop v_{q+1}\otimes v_{q+1}+\grad p_{q+1}=\divop\overline{F}_{q}+\divop\left(\chi^{g}F_{q}+\chi^{b}F_{\ell}-\overline{F}_{q}+\widetilde{R}_{q+1}\right).
\end{cases}\label{eq:forced_Euler_dual_system-1}
\end{equation}

Defining $u_{q+1}{\,\coloneqq\,}\overline{u}_{q}$, $F_{q+1}{\,\coloneqq\,}\overline{F}_{q}$, and $R_{q+1}{\,\coloneqq\,}\chi^{g}F_{q}+\chi^{b}F_{\ell}-\overline{F}_{q}+\widetilde{R}_{q+1}$, we clearly have (\ref{eq:Rq_zerowhere}) with $q$ changed to $q+1$, as well as (\ref{eq:uvF_on_Gq}).  By using (\ref{eq:uvF_sup}), (\ref{eq:perturb_1}), and (\ref{eq:glue1}), it then follows that we have
\begin{align*}
\left\Vert u_{q+1}\right\Vert _{0} & \leq\left\Vert u_{q}\right\Vert _{0}\leq1-\delta_{q}^{\frac{1}{2}}\leq1-\delta_{q+1}^{\frac{1}{2}}
\end{align*}
and
\begin{align*}
\left\Vert v_{q+1}\right\Vert _{0} & \leq\left\Vert v_{q+1}-\overline{v}_{q}\right\Vert _{0}+\left\Vert \overline{v}_{q}-v_{\ell}\right\Vert _{0}+\left\Vert v_{\ell}\right\Vert _{0}\\
 & \leq\frac{M}{2}\delta_{q+1}^{\frac{1}{2}}+C(d)\epsilon_{q}\delta_{q+1}^{\frac{1}{2}}+1-\delta_{q}^{\frac{1}{2}}\ll1-\delta_{q+1}^{\frac{1}{2}}
\end{align*}
provided $a$ is chosen sufficiently large.  Combining this with (\ref{eq:Fq+1_0}), we have therefore proven (\ref{eq:vq_sup-1}).

Similarly, for any $1\leq \jmath\leq 12$, by (\ref{eq:grad_uq_sup}), (\ref{eq:perturb_1}) and (\ref{eq:glue2}) we have
\begin{align*}
\left\Vert u_{q+1}\right\Vert _{\jmath} & =\left\Vert \chi^{g}u_{q}+\chi^{b}u_{\ell}\right\Vert _{\jmath}\leq\left\Vert u_{q}\right\Vert _{\jmath}\leq M\lambda_{q}^{\jmath}\delta_{q}^{\frac{1}{2}}
\end{align*}
and
\begin{align*}
\left\Vert v_{q+1}\right\Vert _{\jmath} & \leq\left\Vert v_{q+1}-\overline{v}_{q}\right\Vert _{\jmath}+\left\Vert \overline{v}_{q}\right\Vert _{\jmath}\leq\frac{M}{2}\lambda_{q+1}^{\jmath}\delta_{q+1}^{\frac{1}{2}}+C(d)\ell^{-\left(\jmath-1\right)}\lambda_{q}^{1-\alpha}\delta_{q+1}^{\frac{1}{2}}\ll M\lambda_{q+1}^{\jmath}\delta_{q+1}^{\frac{1}{2}}
\end{align*}
for large enough $a$, so that, together with (\ref{eq:F_q+1_bound_noloss}), we have proven (\ref{eq:grad_uq_sup-1}), (\ref{eq:grad_vq_sup-1}), and (\ref{eq:Fq_sup-1}). 

Turning to $R_{q+1}$, we note that for any $0\leq \jmath\leq 12$, we have
\begin{align*}
\left\Vert R_{q+1}\right\Vert _{\jmath} & \leq\left\Vert \widetilde{R}_{q+1}\right\Vert _{\jmath}+\left\Vert \chi^{g}F_{q}+\chi^{b}F_{\ell}-\overline{F}_{q}\right\Vert _{\jmath}\leq M\epsilon_{q+1}\lambda_{q+1}^{\jmath-3\alpha}\delta_{q+2}
\end{align*}
because of (\ref{eq:finalR}) and (\ref{eq:commutator_term_in_Fq}); thus we have proven (\ref{eq:Rq_sup-1}).

Finally, for $\jmath\in \{0,1\}$, by (\ref{eq:ul-uq}), (\ref{eq:perturb_1}),
(\ref{eq:glue1}), (\ref{eq:grad_vq_sup}), (\ref{eq:Fl-fq_est-1}),
 and (\ref{eq:Fbar-Fl_noloss}) we have,
\begin{align*}
\|u_{q+1}-u_{q}\|_{\jmath} & =\chi^{b}\|u_{\ell}-u_{q}\|_{\jmath}\leq C(d)\epsilon_{q+1}\lambda_{q+1}^{\jmath}\delta_{q+2}^{\frac{1}{2}}\ll M\lambda_{q+1}^{\jmath}\delta_{q+1}^{\frac{1}{2}},
\end{align*}
\begin{align*}
\|v_{q+1}-v_{q}\|_{\jmath} & \leq\left\Vert v_{q+1}-\overline{v}_{q}\right\Vert _{\jmath}+\left\Vert \overline{v}_{q}-v_{\ell}\right\Vert _{\jmath}+\left\Vert v_{\ell}-v_{q}\right\Vert _{\jmath}\\
 & \leq\frac{M}{2}\lambda_{q+1}^{\jmath}\delta_{q+1}^{\frac{1}{2}}+C(d)\epsilon_{q}\lambda_{q}^{\jmath}\delta_{q+1}^{\frac{1}{2}}+\ell C(d)\lambda_{q-1}^{1+\jmath}\delta_{q-1}^{\frac{1}{2}}\\
 & \ll M\lambda_{q+1}^{\jmath}\delta_{q+1}^{\frac{1}{2}},
 \end{align*}
 and
 \begin{align*}
\|F_{q+1}-F_{q}\|_{\jmath} & \leq\left\Vert \overline{F}_{q}-F_{\ell}\right\Vert _{\jmath}+\left\Vert F_{\ell}-F_{q}\right\Vert _{\jmath}\\
 & \le C(d)\epsilon_{q+1}\lambda_{q+1}^{\jmath-4\alpha}\delta_{q+2}+C(d)\epsilon_{q}\lambda_{q}^{\jmath-3\alpha}\delta_{q+1}\ll M\lambda_{q+1}^{\jmath}\delta_{q+1}
\end{align*}
which prove (\ref{eq:iter_prop_vq_est}), (\ref{eq:iter_prop_uq_est}), and (\ref{eq:iter_prop_Fq_est}).  We note that the constants $C(d)$ are absorbed by increasing
$a$. 

To conclude, we remark that all the properties regarding $\mathcal{B}_{q+1}$ were proven
in \Subsecref{temporal_cutoff}.
\end{proof}

\appendix

\section{Parameter comparisons}
\label{app:parameters}

In this appendix, we record some useful relations between the parameters used in the convex integration construction.  These are used
routinely in Sections $4$--$7$.  We begin by noting the ``essential conversions''
\begin{align}
\tau_{q}\delta_{q+1}^{\frac{1}{2}}\lambda_{q} & \ll1,\label{eq:time-length}
\end{align}
\begin{align}
\ell_{q}=\lambda_{q}^{-\frac{1}{4}}\lambda_{q+1}^{-\frac{3}{4}}\ll\left(\lambda_{q}\lambda_{q+1}\right)^{-\frac{1}{2}} & \ll\epsilon_{q}^{\frac{1}{2}}\frac{\delta_{q+1}^{\frac{1}{2}}}{\lambda_{q}^{1+\frac{3\alpha}{2}}\delta_{q}^{\frac{1}{2}}},\label{eq:ell_smaller}
\end{align}
and
\begin{align}
\lambda_{q}^{1+4\alpha}\ll\ell_{q}^{-1} & \ll\lambda_{q+1}^{1-4\alpha}\ll\lambda_{q}^{\frac{3}{2}}.\label{eq:length_Freq}
\end{align}
Observe that (\ref{eq:time-length}) comes from $\alpha>0$.  On the other hand, since $\alpha$ can be made arbitrarily small by (\ref{eq:alpha_bound}),
(\ref{eq:ell_smaller}) comes from
\begin{align*}
-\frac{1}{2}-\frac{b}{2} & <-\frac{\sigma}{2}-1-b\beta+\beta\\
\iff\sigma & <\left(b-1\right)\left(1-2\beta\right)
\end{align*}
which is implied by (\ref{eq:sigma_bound}).  Finally, (\ref{eq:length_Freq}) is self-evident.

In addition, we have the ``double-skipping'' iteration
\begin{align}
\lambda_{q-1}^{1+20\alpha}\delta_{q-1}^{\frac{1}{2}} & \ll\epsilon_{q}\lambda_{q}^{1-20\alpha}\delta_{q+1}^{\frac{1}{2}}\ll\epsilon_{q}\lambda_{q}^{1-20\alpha}\delta_{q}^{\frac{1}{2}}\label{eq:double_skip_iteration}
\end{align}
because 
\begin{align*}
1-\beta & <-b\sigma+b-b^{2}\beta\\
\iff & \sigma<\frac{\left(b-1\right)\left(1-\beta b-\beta\right)}{b}
\end{align*}
which is implied by (\ref{eq:sigma_bound}).

Lastly, we record the iteration inequalities 
\begin{align}
\epsilon_{q}^{-1}\lambda_{q}^{1+20\alpha}\delta_{q+1} & \ll\epsilon_{q+1}\delta_{q+2}\lambda_{q+1}^{1-20\alpha},\label{eq:stress_size_ind1}
\end{align}
\begin{align}
\epsilon_{q}^{-1}\lambda_{q}^{1+20\alpha}\delta_{q+1}^{\frac{1}{2}} & \ll\epsilon_{q+1}\delta_{q+2}^{\frac{1}{2}}\lambda_{q+1}^{1-20\alpha},\label{eq:stress_size_ind2}
\end{align}
and
\begin{align}
\lambda_{q-1}^{1+20\alpha}\delta_{q}^{\frac{1}{2}}\ll\lambda_{q}^{1+20\alpha}\delta_{q+1}^{\frac{1}{2}} & \ll\epsilon_{q}\epsilon_{q+1}\lambda_{q+1}^{1-20\alpha}\delta_{q+2}^{\frac{1}{2}}\label{eq:stress_size_ind3}.
\end{align}
Indeed, we observe that (\ref{eq:stress_size_ind1}) comes from
\[
-2b\beta-b+1+\sigma\leq-b\sigma-b^{2}\left(2\beta\right)
\]
which is precisely (\ref{eq:sigma_bound}).  Then (\ref{eq:stress_size_ind2}) is an immediate consequence, as $\frac{\delta_{q+2}}{\delta_{q+1}}<1$.  The bound (\ref{eq:stress_size_ind3}) follows immediately as well.

\section{Local existence estimates\protect\label{app:Local-existence-of}}

In this appendix, we give a proof of \Lemref{local_existence}.  As a first step, we recall a nonlinear Gr\"onwall inequality due to LaSalle \cite{lasalleUniquenessTheoremsSuccessive1949}.
\begin{lem}
\label{lem:lasalle}
Assume $A,C>0$ and $f$ is a continuous non-negative function such
that 
\[
f\left(t\right)\leq A+C\int_{0}^{t}f(s)^{2}\;\mathrm{d}s
\]
Then for $t\in\left(0,\frac{1}{2AC}\right)$ we have 
\[
f\left(t\right)\leq\frac{1}{A^{-1}-Ct}\leq\frac{2}{A^{-1}}=2A
\]
\end{lem}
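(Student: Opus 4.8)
The plan is to introduce the running bound $g(t) := A + C\int_0^t f(s)^2\,ds$ and exploit the fact that the hypothesis reads $f \le g$ pointwise, which converts the integral inequality into a differential inequality for $g$. Since $f$ is continuous, $g$ is $C^1$ with $g(0) = A$ and $g'(t) = Cf(t)^2$; using $0 \le f(t) \le g(t)$ together with the evident monotonicity $g \ge A > 0$, one gets the Riccati-type inequality $g'(t) \le C\,g(t)^2$, with $g$ bounded below by $A$ for all $t$ in the interval of definition.

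Next I would divide through by $g^2$, which is legitimate precisely because $g \ge A > 0$, to obtain $\frac{d}{dt}\bigl(-g(t)^{-1}\bigr) = g'(t)/g(t)^2 \le C$. Integrating from $0$ to $t$ yields $-g(t)^{-1} + A^{-1} \le Ct$, i.e. $g(t)^{-1} \ge A^{-1} - Ct$. On the range where the right-hand side is positive --- in particular for $t < \tfrac{1}{AC}$, and a fortiori for $t \in \bigl(0,\tfrac{1}{2AC}\bigr)$ --- this inverts to $g(t) \le (A^{-1} - Ct)^{-1}$, and since $f \le g$ the same bound holds for $f$, giving the first claimed inequality.

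Finally, to recover the cruder constant $2A$, I would simply note that for $t < \tfrac{1}{2AC}$ one has $Ct < \tfrac{1}{2A}$, hence $A^{-1} - Ct > A^{-1} - \tfrac{1}{2A} = \tfrac{1}{2}A^{-1}$, so $(A^{-1} - Ct)^{-1} < 2A$. I do not anticipate any genuine obstacle: the only points deserving a word of care are that the passage from the integral inequality to $g' \le Cg^2$ relies on the continuity of $f$ (so that $g \in C^1$) and on $f \ge 0$, and that the division step and subsequent inversion are valid only thanks to the uniform lower bound $g \ge A$ and the restriction $t < \tfrac{1}{AC}$ that keeps $A^{-1} - Ct$ positive.
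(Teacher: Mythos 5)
Your proposal is correct and follows essentially the same route as the paper: the paper sets $F(t)=\int_0^t f(s)^2\,ds$ (so your $g=A+CF$), derives $\partial_t\bigl((A+CF)^{-1}\bigr)\geq -C$, integrates, and inverts on $t\in\bigl(0,\tfrac{1}{2AC}\bigr)$, exactly as you do. Your version merely spells out the positivity and invertibility justifications a bit more explicitly; there is no substantive difference.
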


\begin{proof}
Let $F(t)=\int_{0}^{t}f(s)^{2}\;\mathrm{d}s$.  Then $F'(t)\leq\left(A+CF(t)\right)^{2}$,
or equivalently 
\[
\partial_{t}\left((A+CF)^{-1}\right)\geq-C,
\]
which implies $(A+CF(t))^{-1}-A^{-1}\geq-Ct$.  Then for $t\in(0,\left(2AC\right)^{-1})$
we have
\[
f\leq A+CF\leq\frac{1}{A^{-1}-Ct}.
\]
\end{proof}
We now consider the Euler equations
\[
\begin{cases}
\partial_{t}v+\divop\left(v\otimes v\right)+\nabla p=f\\
\divop v=0\\
v(0)=v_{0}
\end{cases}
\]
where $f$ and $v_{0}$ are smooth.

It is well-known (cf. \cite{temamLocalExistenceInf1976}) that there
exists a smooth solution $v$ on $[0,T^{*})$ where $T^{*}$ is the
maximal time of existence, and that for $m>\frac{d}{2}+1$, if $\left\Vert v(t)\right\Vert _{H^{m}}$
stays bounded on $[0,T)$ for some $T\in\left(0,\infty\right)$, then
$T^{*}>T$.  It follows that if $\left\Vert v(t)\right\Vert _{C^{m}}$ stays bounded
on $[0,T)$ then $T^{*}>T$.

Let $\theta$ be a multi-index with $\left|\theta\right|=N\in\mathbb{N}_{1}$.
Then we have 
\begin{align*}
\left(\partial_{t}+\nabla_{v}\right)\partial^{\theta}v+\left[\partial^{\theta},\nabla_{v}\right]v+\nabla\partial^{\theta}p & =\partial^{\theta}f
\end{align*}
Let $\varepsilon>0$ be small.  Then by the transport estimate:
\begin{align*}
\left\Vert \partial^{\theta}v(t)\right\Vert _{\varepsilon} & \ls_{\varepsilon}\left\Vert v\left(0\right)\right\Vert _{N+\varepsilon}+\int_{0}^{t}\mathrm{d}s\;\left\Vert \left[\partial^{\theta},\nabla_{v}\right]v(s)\right\Vert _{\varepsilon}+\left\Vert p(s)\right\Vert _{N+1+\varepsilon}+\left\Vert f(s)\right\Vert _{N+\varepsilon}
\end{align*}
We observe that 
\[
\left\Vert \left[\partial^{\theta},\nabla_{v}\right]v(s)\right\Vert _{\varepsilon}\ls\left\Vert v(s)\right\Vert _{1+\varepsilon}\left\Vert v(s)\right\Vert _{N+\varepsilon}
\]
 while 
\begin{align*}
\left\Vert p(s)\right\Vert _{N+1+\varepsilon} & \ls\left\Vert \left(-\Delta\right)^{-1}\left(\nabla v*\nabla v\right)(s)\right\Vert _{N+1+\varepsilon}+\left\Vert \left(-\Delta\right)^{-1}\divop f(s)\right\Vert _{N+1+\varepsilon}\\
 & \ls\left\Vert v(s)\right\Vert _{N+\varepsilon}\left\Vert v(s)\right\Vert _{1+\varepsilon}+\left\Vert f(s)\right\Vert _{N+\varepsilon}
\end{align*}
So for $t\in[0,T)$ we have
\begin{equation}
\left\Vert v(t)\right\Vert _{N+\varepsilon}\ls_{N,\varepsilon}\left\Vert v\left(0\right)\right\Vert _{N+\varepsilon}+t\left\Vert f\right\Vert _{L_{t}^{\infty}C_{x}^{N+\varepsilon}}+\int_{0}^{t}\mathrm{d}s\;\left\Vert v(s)\right\Vert _{N+\varepsilon}\left\Vert v(s)\right\Vert _{1+\varepsilon}\label{eq:gronwall_local}
\end{equation}

\begin{lem}
Let $\varepsilon>0$ be small and $T>0$.  If $\left\Vert v(t)\right\Vert _{1+\varepsilon}$
stays bounded on $[0,T)$ then $T^{*}>T$.
\end{lem}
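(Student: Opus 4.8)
The plan is a routine bootstrap argument: feed the assumed control of the low norm $\|v(t)\|_{1+\varepsilon}$ into the a priori inequality (\ref{eq:gronwall_local}) to propagate control of a high norm $\|v(t)\|_{C^{m}}$ with $m>\frac{d}{2}+1$, and then invoke the continuation criterion recalled just above (following \cite{temamLocalExistenceInf1976}) to conclude $T^{*}>T$. No use of the nonlinear Gr\"onwall inequality (Lemma~\ref{lem:lasalle}) is needed for this statement; that tool enters only in the quantitative short-time bound of Lemma~\ref{lem:local_existence}.

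First I would argue by contradiction and suppose $T^{*}\leq T$; in particular $T^{*}<\infty$, so the smooth solution $v$ exists on $[0,T^{*})$ and, by the blow-up criterion, $\|v(t)\|_{C^{m}}\to\infty$ as $t\uparrow T^{*}$ for every integer $m>\frac{d}{2}+1$. Fix one such $m$ (depending only on $d$), set $K:=\sup_{t\in[0,T^{*})}\|v(t)\|_{1+\varepsilon}$, which is finite by hypothesis, and note that $\|f\|_{L_{t}^{\infty}([0,T])C_{x}^{m+\varepsilon}}<\infty$ since $f$ is smooth on the compact set $[0,T]\times\mathbb{T}^{d}$.

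Next I would apply (\ref{eq:gronwall_local}) with $N=m$ and insert the bound $\|v(s)\|_{1+\varepsilon}\leq K$ into the integrand, which turns that inequality into the \emph{linear} Gr\"onwall inequality
\[
\|v(t)\|_{m+\varepsilon}\;\leq\;C_{m,\varepsilon}\Big(\|v(0)\|_{m+\varepsilon}+T\,\|f\|_{L_{t}^{\infty}C_{x}^{m+\varepsilon}}\Big)\;+\;C_{m,\varepsilon}\,K\int_{0}^{t}\|v(s)\|_{m+\varepsilon}\,\mathrm{d}s
\]
valid for $t\in[0,T^{*})$. Gr\"onwall's lemma then yields $\|v(t)\|_{m+\varepsilon}\leq C_{m,\varepsilon}\big(\|v(0)\|_{m+\varepsilon}+T\|f\|_{L_{t}^{\infty}C_{x}^{m+\varepsilon}}\big)\,e^{C_{m,\varepsilon}KT}$ for all $t\in[0,T^{*})$, and since $\|v(t)\|_{C^{m}}\leq\|v(t)\|_{m+\varepsilon}$ this contradicts $\|v(t)\|_{C^{m}}\to\infty$ as $t\uparrow T^{*}$. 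Hence $T^{*}>T$. Equivalently, phrased without contradiction: (\ref{eq:gronwall_local}) together with Gr\"onwall shows $\|v(t)\|_{C^{m}}$ stays bounded on $[0,\min(T,T^{*}))$, whence the continuation criterion forces $T^{*}>T$.

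I do not anticipate a genuine obstacle here. The only points requiring mild care are that (\ref{eq:gronwall_local}) is invoked on the full interval of smoothness $[0,T^{*})$ (legitimate, as $v$ is smooth there, and the constant in (\ref{eq:gronwall_local}) depends only on $m$ and $\varepsilon$), that the coefficient $K$ is truly finite (this is precisely the hypothesis), and the harmless convention that ``$\|v(t)\|_{1+\varepsilon}$ stays bounded on $[0,T)$'' is read with respect to the maximal smooth solution, i.e.\ on $[0,\min(T,T^{*}))$.
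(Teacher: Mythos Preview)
Your proposal is correct and follows essentially the same approach as the paper: both feed the assumed bound on $\|v(t)\|_{1+\varepsilon}$ into the a priori estimate (\ref{eq:gronwall_local}) with $N>\tfrac{d}{2}+1$, apply linear Gr\"onwall, and invoke the continuation criterion. Your write-up is somewhat more careful about the interval $[0,\min(T,T^{*}))$ on which the estimates are valid, but the mathematical content is identical.
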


\begin{proof}
Let $N>\frac{d}{2}+1$ and $\left\Vert v(t)\right\Vert _{1+\varepsilon}\leq B$
on $[0,T)$.  From (\ref{eq:gronwall_local}), by Gr\"onwall, we have
\[
\left\Vert v(t)\right\Vert _{N+\varepsilon}\ls_{N,\varepsilon}\left(\left\Vert v\left(0\right)\right\Vert _{N+\varepsilon}+T\left\Vert f\right\Vert _{L_{t}^{\infty}C_{x}^{N+\varepsilon}}\right)e^{TB}
\]
for $t\in[0,T).$ Therefore, $T^{*}>T$.
\end{proof}

Without loss of generality, we can assume that $\left\Vert v\left(0\right)\right\Vert _{N+\varepsilon}+T\left\Vert f\right\Vert _{L_{t}^{\infty}C_{x}^{N+\varepsilon}}>0$
(otherwise the solution is just the zero solution).  Then, for $\left|\theta\right|=1$, by Lemma \ref{lem:lasalle}, for any $T\in[0,T^{*})$ and 
\[
0\leq t<\min\left\{ T,\frac{1}{2}\left(C^{2}\left\Vert v\left(0\right)\right\Vert _{1+\varepsilon}+C^{2}T\left\Vert f\right\Vert _{L_{t}^{\infty}C_{x}^{1+\varepsilon}}\right)^{-1}\right\} 
\]
 where $C=C\left(N,\varepsilon\right)$, we have 
\begin{align*}
\left\Vert v(t)\right\Vert _{1+\varepsilon} & \leq\frac{1}{\left(C\left\Vert v\left(0\right)\right\Vert _{1+\varepsilon}+CT\left\Vert f\right\Vert _{L_{t}^{\infty}C_{x}^{1+\varepsilon}}\right)^{-1}-Ct}\\
 & \leq2C\left\Vert v\left(0\right)\right\Vert _{1+\varepsilon}+2CT\left\Vert f\right\Vert _{L_{t}^{\infty}C_{x}^{1+\varepsilon}}
\end{align*}
Therefore, if we let 
\[
\tau\ls\min\left\{ \left\Vert v\left(0\right)\right\Vert _{1+\varepsilon}^{-1},\left\Vert f\right\Vert _{L_{t}^{\infty}C_{x}^{1+\varepsilon}}^{-1/2}\right\} 
\]
 then $\left\Vert v(t)\right\Vert _{1+\varepsilon}$ stays bounded
on $[0,\min\{T^{*},\tau\})$ and 
\[
\left\Vert v(t)\right\Vert _{1+\varepsilon}\ls_{\varepsilon}\left\Vert v\left(0\right)\right\Vert _{1+\varepsilon}+\tau\left\Vert f\right\Vert _{L_{t}^{\infty}C_{x}^{1+\varepsilon}}
\]
 This implies $T^{*}>\min\{T^{*},\tau\}$ and $T^{*}>\tau$.  So \Lemref{local_existence}
is proven for $N=1$.

For $N>1$, by Gr\"onwall, (\ref{eq:gronwall_local}) implies 
\begin{align*}
\left\Vert v(t)\right\Vert _{N+\varepsilon} & \ls_{N,\varepsilon}\left(\left\Vert v\left(0\right)\right\Vert _{N+\varepsilon}+\tau\left\Vert f\right\Vert _{L_{t}^{\infty}C_{x}^{N+\varepsilon}}\right)\exp\left(2C\tau\left\Vert v\left(0\right)\right\Vert _{1+\varepsilon}+2C\tau^{2}\left\Vert f\right\Vert _{L_{t}^{\infty}C_{x}^{1+\varepsilon}}\right)\\
 & \ls\left\Vert v\left(0\right)\right\Vert _{N+\varepsilon}+\tau\left\Vert f\right\Vert _{L_{t}^{\infty}C_{x}^{N+\varepsilon}}
\end{align*}
on $[0,\tau].$ Thus \Lemref{local_existence} is proven.

\section{Onsager exponent\protect\label{app:Onsager-exponent}}

In this appendix, we show (for completeness of our exposition) that the energy balance is conserved when the regularity is above $\frac{1}{3}$. 
\begin{prop}
Assume $\beta>\frac{1}{3}$, $u\in C_{t,x}^{\beta}$ and $F\in C_{t,x}^{2\beta}$
where 
\begin{align*}
\partial_{t}u+\mathbb{P}\divop\left(u\otimes u\right) & =\mathbb{P}\divop F\\
\nabla\cdot u & =0
\end{align*}
Writing $\left\langle \left\langle U,V\right\rangle \right\rangle {\,\coloneqq\,}\int_{\mathbb{T}^{d}}\left\langle U,V\right\rangle $
for vector fields $U,V$ (or tensors of the same rank), we have
\[
\frac{1}{2}\left\langle \left\langle u(t),u(t)\right\rangle \right\rangle -\frac{1}{2}\left\langle \left\langle u(0),u(0)\right\rangle \right\rangle =\int_{0}^{t}\left\langle \left\langle \divop F(s),u(s)\right\rangle \right\rangle \;\mathrm{d}s
\].
\end{prop}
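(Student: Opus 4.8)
The plan is to carry out the Constantin--E--Titi mollification argument \cite{constantinOnsagerConjectureEnergy1994}, modified to accommodate the force term. Fix a smooth radial spatial mollifier $\psi_{\varepsilon}$ at scale $\varepsilon$ and set $u^{\varepsilon}\coloneqq u*\psi_{\varepsilon}$ and $F^{\varepsilon}\coloneqq F*\psi_{\varepsilon}$. Since $u\in C_{t,x}^{\beta}$, the curve $t\mapsto u^{\varepsilon}(t)$ is $C^{1}$ with values in $C_{x}^{\infty}$, and mollifying the equation in space gives $\partial_{t}u^{\varepsilon}+\mathbb{P}\divop(u\otimes u)^{\varepsilon}=\mathbb{P}\divop F^{\varepsilon}$. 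Pairing against $u^{\varepsilon}$ and using that $u^{\varepsilon}$ is divergence-free (so that $\mathbb{P}$ may be discarded against it), one obtains the pointwise-in-$t$ identity
\[
\frac{1}{2}\frac{d}{dt}\left\Vert u^{\varepsilon}(t)\right\Vert _{L^{2}}^{2}=\left\langle \left\langle r_{\varepsilon},\nabla u^{\varepsilon}\right\rangle \right\rangle +\left\langle \left\langle \divop F^{\varepsilon},u^{\varepsilon}\right\rangle \right\rangle ,
\]
where $r_{\varepsilon}\coloneqq(u\otimes u)^{\varepsilon}-u^{\varepsilon}\otimes u^{\varepsilon}$; here one uses $\divop(u^{\varepsilon}\otimes u^{\varepsilon})\cdot u^{\varepsilon}=\frac{1}{2}u^{\varepsilon}\cdot\nabla|u^{\varepsilon}|^{2}$ together with $\divop u^{\varepsilon}=0$ to discard the self-interaction term, and then integrates $\divop r_{\varepsilon}$ by parts.

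Next I would estimate the two terms on the right separately. For the flux term, the standard commutator bounds $\left\Vert r_{\varepsilon}\right\Vert _{0}\ls\varepsilon^{2\beta}[u]_{\beta}^{2}$ and $\left\Vert \nabla u^{\varepsilon}\right\Vert _{0}\ls\varepsilon^{\beta-1}[u]_{\beta}$ give $\left|\left\langle \left\langle r_{\varepsilon},\nabla u^{\varepsilon}\right\rangle \right\rangle \right|\ls\varepsilon^{3\beta-1}[u]_{\beta}^{3}$, which tends to $0$ uniformly in $t\in[0,T]$ precisely because $\beta>\frac{1}{3}$ and $\sup_{t}[u(t)]_{\beta}<\infty$. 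For the force term, since $\psi_{\varepsilon}$ is even, one has $\left\langle \left\langle \divop F^{\varepsilon},u^{\varepsilon}\right\rangle \right\rangle =\left\langle \left\langle \divop F,u*\psi_{\varepsilon}*\psi_{\varepsilon}\right\rangle \right\rangle $; as $\varepsilon\to0$ we have $u*\psi_{\varepsilon}*\psi_{\varepsilon}\to u$ in $C_{x}^{\gamma}$ for every $\gamma<\beta$, while $\divop F\in C_{x}^{2\beta-1}$. Since $\beta>\frac{1}{3}$ one may choose $\gamma$ with $\gamma+(2\beta-1)>0$, so that the $C_{x}^{2\beta-1}\times C_{x}^{\gamma}$ duality pairing is continuous; hence $\left\langle \left\langle \divop F^{\varepsilon}(t),u^{\varepsilon}(t)\right\rangle \right\rangle \to\left\langle \left\langle \divop F(t),u(t)\right\rangle \right\rangle $ uniformly in $t$, the right-hand side being understood via this pairing (equivalently $-\left\langle \left\langle F,\nabla u\right\rangle \right\rangle $ in the paraproduct sense, which is well defined since $2\beta+(\beta-1)>0$).

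Finally, I would integrate the displayed identity over $[0,t]$ and let $\varepsilon\to0$. The left-hand side converges to $\frac{1}{2}\left\langle \left\langle u(t),u(t)\right\rangle \right\rangle -\frac{1}{2}\left\langle \left\langle u(0),u(0)\right\rangle \right\rangle $ because $u^{\varepsilon}\to u$ uniformly; the flux contribution vanishes by the uniform bound of the previous paragraph; and the force contribution converges to $\int_{0}^{t}\left\langle \left\langle \divop F(s),u(s)\right\rangle \right\rangle ds$ by uniform convergence of the integrand on $[0,t]$. This yields the asserted energy balance. The only genuinely delicate step is the force term: one must interpret $\left\langle \left\langle \divop F,u\right\rangle \right\rangle $ correctly and show that its spatially mollified approximants converge to it, and this is exactly where the hypothesis $F\in C_{t,x}^{2\beta}$ is used, through the inequality $2\beta+\beta>1$; every other step is routine.
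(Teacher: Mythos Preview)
Your proof is correct and follows essentially the same Constantin--E--Titi mollification argument as the paper: mollify in space, derive the energy identity for $u^{\varepsilon}$, kill the commutator term $\langle\langle r_{\varepsilon},\nabla u^{\varepsilon}\rangle\rangle$ via the $\varepsilon^{3\beta-1}$ bound, and pass to the limit in the force term. The only minor difference is in the force-term limit: the paper bounds $|\langle\langle \divop F^{\varepsilon}, u^{\varepsilon}\rangle\rangle - \langle\langle \divop F, u\rangle\rangle|$ directly via $B^{s}_{2,2}$ duality (splitting into $\divop(F^{\varepsilon}-F)$ against $u^{\varepsilon}$ and $\divop F$ against $u^{\varepsilon}-u$), whereas you use self-adjointness of the mollifier to write $\langle\langle \divop F^{\varepsilon}, u^{\varepsilon}\rangle\rangle=\langle\langle \divop F, u*\psi_{\varepsilon}*\psi_{\varepsilon}\rangle\rangle$ and then invoke $C^{2\beta-1}\times C^{\gamma}$ paraproduct continuity---both are valid under $3\beta>1$ and neither is meaningfully simpler than the other.
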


\begin{rem*}
The right-hand side is well-defined, and 
\begin{align*}
\left|\left\langle \left\langle \divop F(s),u(s)\right\rangle \right\rangle \right|&\ls_{\beta,\mu}\left\Vert \divop F(s)\right\Vert _{B_{2,2}^{2\beta-1-\mu}}\left\Vert u(s)\right\Vert _{B_{2,2}^{\mu+1-2\beta}}\\
&\ls_{\beta,\mu}\left\Vert F(s)\right\Vert _{B_{\infty,\infty}^{2\beta}}\left\Vert u(s)\right\Vert _{B_{\infty,\infty}^{\beta}},
\end{align*}
where $\mu\in\left(0,3\beta-1\right)$ and $B_{p,q}^{s}$ are the
usual Besov spaces (see, for instance \cite{triebel_function_I}).
\end{rem*}
\begin{proof}
As in \Subsecref{mollifier}, let $\psi_{\ell}$ be a
smooth standard radial mollifier in space of length $\ell$.  For any
$\varepsilon>0$ small, we write
\begin{align*}
u^{\varepsilon} & =u*\psi_{\varepsilon} & \left(u\otimes u\right)^{\varepsilon} & =\left(u\otimes u\right)*\psi_{\varepsilon} & F^{\varepsilon}=F*\psi_{\varepsilon}.
\end{align*}
Observe that $\partial_{t}u^{\varepsilon}+\mathbb{P}\divop\left(u\otimes u\right)^{\varepsilon}=\mathbb{P}\divop F^{\varepsilon}$.
Then 
\begin{align*}
 & \frac{1}{2}\left\langle \left\langle u(t),u(t)\right\rangle \right\rangle -\frac{1}{2}\left\langle \left\langle u(0),u(0)\right\rangle \right\rangle \\
= & \lim_{\varepsilon\to0}\frac{1}{2}\left\langle \left\langle u^{\varepsilon}(t),u^{\varepsilon}(t)\right\rangle \right\rangle -\frac{1}{2}\left\langle \left\langle u^{\varepsilon}(0),u^{\varepsilon}(0)\right\rangle \right\rangle \\
= & \lim_{\varepsilon\to0}\int_{0}^{t}\left\langle \left\langle \partial_{t}u^{\varepsilon}(s),u^{\varepsilon}(s)\right\rangle \right\rangle \;\mathrm{ds}\\
= & \lim_{\varepsilon\to0}\int_{0}^{t}\left\langle \left\langle \left(u\otimes u\right)^{\varepsilon}(s),\nabla u^{\varepsilon}(s)\right\rangle \right\rangle \;\mathrm{ds}+\int_{0}^{t}\left\langle \left\langle \divop F^{\varepsilon}(s),u^{\varepsilon}(s)\right\rangle \right\rangle \;\mathrm{d}s.
\end{align*}
We observe that 
\begin{align*}
 & \left|\left\langle \left\langle \left(u\otimes u\right)^{\varepsilon}(s),\nabla u^{\varepsilon}(s)\right\rangle \right\rangle \right|=\left|\left\langle \left\langle \left(u\otimes u\right)^{\varepsilon}(s)-u^{\varepsilon}\otimes u^{\varepsilon}(s),\nabla u^{\varepsilon}(s)\right\rangle \right\rangle \right|\\
\ls & \left\Vert \left(u\otimes u\right)^{\varepsilon}(s)-u^{\varepsilon}\otimes u^{\varepsilon}(s)\right\Vert _{0}\left\Vert \nabla u^{\varepsilon}(s)\right\Vert _{0}\ls\varepsilon^{2\beta}\left\Vert u(s)\right\Vert _{\beta}^{2}\varepsilon^{\beta-1}\left\Vert u(s)\right\Vert _{\beta}
\end{align*}
where we used the commutator estimate (\ref{comm1}).  As $\beta>\frac{1}{3}$
we conclude 
\[
\lim_{\varepsilon\to0}\int\left\langle \left\langle u^{\varepsilon}\otimes u^{\varepsilon}(s),\nabla u^{\varepsilon}(s)\right\rangle \right\rangle \;\mathrm{ds}=0.
\]

Then, by letting $\mu\in\left(0,3\beta-1\right)$, we observe that
\begin{align*}
 & \left|\left\langle \left\langle \divop F^{\varepsilon},u^{\varepsilon}\right\rangle \right\rangle -\left\langle \left\langle \divop F,u\right\rangle \right\rangle \right|\\
\leq & \left|\left\langle \left\langle \divop\left(F^{\varepsilon}-F\right),u^{\varepsilon}\right\rangle \right\rangle \right|+\left|\left\langle \left\langle \divop F,u^{\varepsilon}-u\right\rangle \right\rangle \right|\\
\ls_{\beta,\mu} & \left\Vert F^{\epsilon}-F\right\Vert _{B_{2,2}^{2\beta-\mu}}\left\Vert u\right\Vert _{B_{2,2}^{-2\beta+1+\mu}}+\left\Vert F\right\Vert _{B_{2,2}^{2\beta-\mu}}\left\Vert u^{\varepsilon}-u\right\Vert _{B_{2,2}^{-2\beta+1+\mu}}\\
\ls_{\beta,\mu} & \left(\varepsilon^{\frac{1}{2}\mu}+\varepsilon^{\frac{1}{2}\left(3\beta-1-\mu\right)}\right)\left\Vert F\right\Vert _{B_{\infty,\infty}^{2\beta}}\left\Vert u\right\Vert _{B_{\infty,\infty}^{\beta}}
\end{align*}
As $\beta>\frac{1}{3}$, we conclude 
\begin{align*}
\lim_{\varepsilon\to0}\int_{0}^{t}\left\langle \left\langle \divop F^{\varepsilon}(s),u^{\varepsilon}(s)\right\rangle \right\rangle \;\mathrm{ds} & =\int_{0}^{t}\left\langle \left\langle \divop F(s),u(s)\right\rangle \right\rangle \;\mathrm{d}s,
\end{align*}
as desired.
\end{proof}

%\printbibliography[heading=bibintoc]

\begin{thebibliography}{10}
\bibitem[Alb+22]{albrittonInstabilityNonuniqueness2d2022} D. Albritton, E. Brué, M. Colombo, C. De~Lellis, V. Giri, M. Janisch and H. Kwon.  Instability and nonuniqueness for the $2d$ Euler equations in vorticity form, after M. Vishik.  Preprint (2021), arXiv:2112.04943.
\bibitem[BL15]{bourgain2015strong} J. Bourgain and D. Li.  Strong illposedness of the incompressible Euler equation in integer $C^m$ spaces.  Geom. and Func. Anal. 25 (2015), no. 1, 1--86.
\bibitem[Bru+22]{bruecolomboOnsagerCritical} E. Bru\'e, M. Colombo, G. Crippa, C. De~Lellis and M. Sorella.  Onsager critical solutions of the forced Navier-Stokes equations.  Preprint (2022), arXiv:2212.08413.
\bibitem[BD22]{bruedelellisAnomDissip} E. Bru\'e and C. De~Lellis.  Anomalous dissipation for the forced 3D Navier-Stokes equations.  Preprint (2022), arXiv:2207.06301.
\bibitem[BSV19]{buckmasterNonuniquenessWeakSolutions2019b} T. Buckmaster, S. Shkoller and V. Vicol.  Nonuniqueness of Weak Solutions to the SQG Equation.  Comm. Pure Appl. Math. 72 (2019), no. 9, 1809--1874.
\bibitem[Buc+15]{buckmasterAnomalousDissipation5Holder2015} T. Buckmaster, C. De~Lellis, P. Isett and L. Sz\'ekelyhidi~Jr.  Anomalous dissipation for 1/5-Hölder Euler flows.  Ann. of Math. 182 (2015), no. 1, 127--172.
\bibitem[Buc+19]{buckmasterOnsagerConjectureAdmissible2017} T. Buckmaster, C. De~Lellis, L. Sz\'ekelyhidi~Jr and V. Vicol.  Onsager's conjecture for admissible weak solutions.  Comm. Pure Appl. Math. 72 (2019), no. (2), 229--274.
\bibitem[BV19]{buckmasterConvexIntegrationPhenomenologies2019} T. Buckmaster and V. Vicol.  Convex integration and phenomenologies in turbulence.  EMS Surv. Math. Sci. 6 (2019), no. 1--2, 173--263.
\bibitem[BHP22]{bulutEpochsRegularityWild2022} A. Bulut, M.~K. Huynh and S. Palasek.  Epochs of regularity for wild Hölder-continuous solutions of the Hypodissipative Navier-Stokes System.  Preprint (2022), arXiv:2201.05600.
\bibitem[BHP]{buluthuynhpalasekSQG} A. Bulut, M.~K. Huynh and S. Palasek.  Non-uniqueness of forced weak solutions to the surface quasi-geostrophic equation.  Work in preparation.
\bibitem[CI20]{caoRigidityFlexibilityIsometric2020} W. Cao and D. Inauen.  Rigidity and Flexibility of Isometric Extensions.  Preprint (2020), arXiv:2010.00418.
\bibitem[CL22]{cheskidovSharpNonuniquenessNavierStokes2020} A. Cheskidov and X. Luo.  Sharp nonuniqueness for the Navier--Stokes equations.  Invent. Math. 229 (2022), no. 3, 987--1054.
\bibitem[Con15]{constantin2015lagrangian} P. Constantin.  Lagrangian--Eulerian methods for uniqueness in hydrodynamic systems.  Adv. in Math. 278 (2015), 67--102.
\bibitem[CET94]{constantinOnsagerConjectureEnergy1994} P. Constantin, W. E and E.~S. Titi.  Onsager's conjecture on the energy conservation for solutions of Euler's equation.  Comm. Math. Phys. 165 (1994), no. 1, 207--209.
\bibitem[CDS12]{conti2012h} S. Conti, C. De~Lellis and L. Sz\'ekelyhidi~Jr.  $h$-principle and rigidity for $C^{1,\alpha}$ isometric embeddings.  Nonlinear partial differential equations, 83--116, Abel Symp., 7, Springer, Heidelberg, 2012.
\bibitem[DI20]{delellisInauen} C. De~Lellis and D. Inauen.  $C^{1,\alpha}$ isometric embeddings of polar caps, Adv. Math. 363 (2020), 106996.
\bibitem[DS13]{delellisDissipativeContinuousEuler2013} C. De~Lellis and L. Sz\'ekelyhidi~Jr.  Dissipative continuous Euler flows.  Invent. Math. 193 (2013) no. 2, 377--407.
\bibitem[DS07]{delellisEulerEquationsDifferential2007} C. De~Lellis and L. Sz\'ekelyhidi~Jr.  The Euler equations as a differential inclusion.  Ann. of Math. 170 (2009) no. 3, 1417--1436.
\bibitem[DS14]{delellisDissipativeEulerFlows2014} C. De~Lellis and László Székelyhidi~Jr.  Dissipative Euler flows and Onsager's conjecture.  J. Eur. Math. Soc. 16 (2014), no. 7, 1467--1505.
\bibitem[DH22]{derosaDimensionSingularSet2021} L. De~Rosa and S. Haffter.  Dimension of the singular set of wild H\"older solutions of the incompressible Euler equations.  Nonlinearity 35 (2022), no. 10, 5150--5192.
\bibitem[EM20]{elgindi2020infty} T.~M. Elgindi and N. Masmoudi.  $L^\infty$ ill-posedness for a class of equations arising in hydrodynamics.  Arch. Ration. Mech. Anal. 235 (2020), no. 3, 1979--2025.
\bibitem[Ise17]{isettOlderContinuousEuler2014} P. Isett.  H\"older Continuous Euler Flows in Three Dimensions with Compact Support in Time.  Annals of Mathematics Studies, 196.  Princeton Univ. Press, Princeton, NJ, 2017.
\bibitem[Ise18]{isettProofOnsagerConjecture2018} P. Isett.  A proof of Onsager's conjecture.  Ann. of Math. 188 (2018), no. 3, 871--963.
\bibitem[Kla16]{klainermanNashUniqueContribution2016} S. Klainerman.  On Nash’s unique contribution to analysis in just three of his papers.  Bull. Amer. Math. Soc. 54 (2017), no. 2, 283--305.
\bibitem[LaS49]{lasalleUniquenessTheoremsSuccessive1949} J.~LaSalle.  Uniqueness Theorems and Successive Approximations.  Ann. of Math. 50 (1949), 722--730.
\bibitem[Tay11]{Taylor_PDE1} M.~E. Taylor.  Partial Differential Equations I. Basic Theory.  2nd ed.  Appl. Math. Sci. 115, Springer New York, 2011.
\bibitem[Tem76]{temamLocalExistenceInf1976} R.~Temam.  Local existence of $C^\infty$ solutions of the Euler equations of incompressible perfect fluids, in Turbulence and Navier Stokes Equations, Lecture Notes in Mathematics 565, 184--194. Springer Berlin Heidelberg, 1976.
\bibitem[Tri10]{triebel_function_I} H.~Triebel.  Theory of Function Spaces.  Modern Birkhäuser Classics. Springer Basel, 2010.
\bibitem[Vis18a]{vishikInstabilityNonuniquenessCauchy2018} M. Vishik.  Instability and non-uniqueness in the Cauchy problem for the Euler equations of an ideal incompressible fluid. Part I.  Preprint (2018), arXiv:1805.09426.
\bibitem[Vis18b]{vishikInstabilityNonuniquenessCauchy2018b} M. Vishik.  Instability and non-uniqueness in the Cauchy problem for the Euler equations of an ideal incompressible fluid. Part II.  Preprint (2018), arXiv:1805.09440.
\end{thebibliography}
\end{document}